\documentclass[12pt]{article}
\usepackage{amsmath,amsfonts,amssymb}
\usepackage{alltt}

\usepackage{amsmath,amsfonts,ifthen,fullpage,enumerate}  %,amsthm}
\usepackage{mathrsfs}
\usepackage{hyperref}
\usepackage{mathtools}

\usepackage{enumerate}

\usepackage{amsmath,amsfonts,amssymb}

\usepackage{makecell}  %thick hline

\usepackage{graphicx}        % standard LaTeX graphics tool
                             % when including figure files

\usepackage[table]{xcolor}   % For coloured cells.

\usepackage{tcolorbox}

\usepackage[all,pdf]{xy}
\usepackage{lmodern,amssymb}   % Drawing lattices

\oddsidemargin       -0.0in
\evensidemargin     -0.1in
\textwidth    6.2in         % columbia requires minimal 1.5in margine
\topmargin    +0.1in        % on binding side. 1in on all other sides.
\textheight   9.0in   % we give them 1.55 on binding side. 1.05 on other
\headheight   0.20in  % 2.5in divided between top and bottom. a little more
\headsep      0.0in   % on the top.
\marginparwidth 0in
\marginparsep   0.1in
\parindent    20pt
\parskip = 0pt

\def\Co{\mathop{\rm Co}\nolimits}

\def\spam{\mathop{\rm span}\nolimits}

\def\Re{\mathop{\rm Re}\nolimits}
\def\Im{\mathop{\rm Im}\nolimits}

\def\Gram{\mathop{\rm Gram}\nolimits}

\def\Sym{\mathop{\rm Sym}\nolimits}
\def\trace{\mathop{\rm trace}\nolimits}

\def\rank{\mathop{\rm rank}\nolimits}

\def\diag{\mathop{\rm diag}\nolimits}

\def\pmat#1{\begin{pmatrix}#1\end{pmatrix}}
\def\mat#1{\begin{matrix}#1\end{matrix}}
\def\question#1{{\bf Question: }#1}
\def\question#1{}

\def\cG{{\cal G}}

\def\cO{{\cal O}}
\def\cT{{\cal T}}

\def\R{\mathbb{R}}

\def\CC{\mathbb{C}}

\def\FF{\mathbb{F}}
\def\HH{\mathbb{H}}

\def\OO{\mathbb{O}}
\def\PP{\mathbb{P}}
\def\QQ{\mathbb{Q}}
\def\ZZ{\mathbb{Z}}

\def\Cd{\C^d}
\def\Fd{\FF^d}
\def\Od{\OO^d}
\def\Hd{\HH^d}
\def\Hn{\HH^n}
\def\Rd{\R^d}

\def\C{\mathbb{C}}

\def\implies{\Longrightarrow}
\def\SS{\mathbb{S}}

\newcommand{\RR}{\mathbb{R}}

\newtheorem{theorem}{Theorem}[section]

\newtheorem{lemma}{Lemma}[section]
\newtheorem{example}{Example}[section]

\newtheorem{proposition}{Proposition}[section]
\newtheorem{definition}{Definition}[section]
\newtheorem{conjecture}{Conjecture}
\newenvironment{proof}{{\noindent \it
Proof.}}{\hfill$\Box$\medskip}
%
% Shayne's definitions
%

\newif\ifdraft\def\draft{\drafttrue\hoffset=.8truecm\showlabeltrue
\def\comment##1{{\bf comment: ##1}}
\headline={\sevenrm \hfill \ifx\filenamed\undefined\jobname\else\filenamed\fi%
(.tex) (as of \ifx\updated\undefined???\else\updated\fi)
 \TeX'ed at {\hour\time\divide\hour by 60{}%
\minutes\hour\multiply\minutes by 60{}%
\advance\time by -\minutes
\the\hour:\ifnum\time<10{}0\fi\the\time\  on \today\hfill}}
}

\def\inpro#1{\langle#1\rangle}
\def\ip#1{\langle\kern-.28em\langle#1\rangle\kern-.28em\rangle_\nu}

\def\tensor{\otimes}
\def\cC{{\cal C}}
\def\cD{{\cal D}}
\def\cO{{\cal O}}
\def\cT{{\cal T}}
\def\cI{{\cal I}}
\def\cH{{\cal H}}

\def\cV{{\cal V}}
\def\cW{{\cal W}}
\def\V{\cV}

\def\norm#1{\Vert#1\Vert}%\def\norm#1{\|#1\|} does not work with verbatim.tex
\def\openR{{{\rm I}\kern-.16em {\rm R}}}

\def\Fd{\FF^d}
\let\ga\alpha

\let\gb\beta

\let\gG\Gamma
\let\gd\delta
\let\gD\Delta

\let\gth\theta

\let\gl\lambda

\let\gL\Lambda

\let\gs\sigma

\let\go\omega
\let\gO\Omega
\let\ga\alpha

\let\gam\gamma
\let\gG\Gamma
\let\gb\beta
\let\gd\delta
\let\gs\sigma
\def\inpro#1{\langle#1\rangle}

\def\Im{\mathop{\rm Im}\nolimits}

\def\ker{\mathop{\rm ker}\nolimits}
\def\GL{\mathop{\it GL}\nolimits}

\def\Iff{\hskip1em\Longleftrightarrow\hskip1em}
\def\Implies{\hskip1em\Longrightarrow\hskip1em}

%%% this is used in \Heading and \Section
 
\def\formeq{\the\sectionno.\the\equationno}  %% equation numbering
% Have equations on the left
\def\elabel#1/#2/#3/{\global\advance\equationno by 1 %
\ifx#1\empty\else\emember#1%
\ifshowlabel\marginal{\string#1}\fi\fi%
\ifmmode\eqno{#3(\formeq#2)}\else#3\formeq#2\fi} %<------ switch to \leqno ???

% Make a square box outline, of sidelength #1 and with line thickness #2.
\def\makeblanksquare#1#2{
\dimen0=#1pt\advance\dimen0 by -#2pt
      \vrule height#1pt width#2pt depth0pt\kern-#2pt
      \vrule height#1pt width#1pt depth-\dimen0 \kern-#1pt
      \vrule height#2pt width#1pt depth0pt \kern-#2pt
      \vrule height#1pt width#2pt depth0pt
}
% Here is a particularly nice box, for end_of_proof or for the boxspline box

  % a ~

\title{\bf 
The geometry of the six quaternionic equiangular lines in $\HH^2$
%Real, complex and quaternionic tight frames
}

\author{Shayne Waldron\\ }

\begin{document}

\maketitle 

\begin{abstract}
We give a simple presentation of the six quaternionic equiangular lines in $\HH^2$
as an orbit of the primitive quaternionic reflection group of order $720$
(which is isomorphic to $2\cdot A_6$, the double cover of $A_6$).
Other orbits of this group are also seen to give optimal spherical designs (packings)
 of $10$, $15$ and $20$ lines in $\HH^2$,
	with angles $\{ {1\over3},{2\over3} \}$,  $\{ {1\over4},{5\over8}\}$ 
	and $\{0,{1\over3},{2\over3} \}$, respectively.
We consider the origins of this reflection group as one of Blichfeldt's ``finite collineation groups'' for lines in $\CC^4$,
and general methods for finding nice systems of quaternionic lines.

%We seek a simple presentation of the six equiangular lines in $\HH^2$,
%in an attempt to understand quaternionic equiangular lines in general
%(in the context of maximal sets of real and complex equiangular lines).

\end{abstract}

\bigskip
\vfill

\noindent {\bf Key Words:}
finite tight frames,
quaternionic equiangular lines,
equi-isoclinic subspaces,
quaternionic reflection groups,
representations over the quaternions,
Frobenius-Schur indicator,
projective spherical $t$-designs,
special and absolute bounds on lines,
double cover of $A_6$.

\bigskip
\noindent {\bf AMS (MOS) Subject Classifications:}
primary
05B30, \ifdraft (Other designs, configurations) \else\fi
15B33, \ifdraft (Matrices over special rings (quaternions, finite fields, etc.) \else\fi
20C25, \ifdraft Projective representations and multipliers \else\fi
20G20, \ifdraft Linear algebraic groups over the reals, the complexes, the quaternions \else\fi
51M05, \ifdraft (Euclidean geometries (general) and generalizations) \else\fi
%42C15, \ifdraft General harmonic expansions, frames  \else\fi
51M20, \ifdraft (Polyhedra and polytopes; regular figures, division of spaces [See also 51F15]) \else\fi
\quad
secondary
15B57, \ifdraft (Hermitian, skew-Hermitian, and related matrices) \else\fi
51E99, \ifdraft Geometry,  None of the above, but in this section \else\fi
51M15, \ifdraft (Geometric constructions in real or complex geometry) \else\fi
65D30. \ifdraft (Numerical integration) \else\fi
%94A12. \ifdraft (Signal theory [characterization, reconstruction, etc.]) \else\fi

\vskip .5 truecm
\hrule
\newpage

\section{Introduction}

There has been considerable  interest in determining maximal sets of equiangular lines
in the Euclidean spaces $\RR^d$, $\Cd$ and $\Hd$, as part of the theory of spherical 
designs, since it began in the 1970's \cite{DGS77}.
The existence of real equiangular lines corresponds to the existence of 
certain classes of strongly regular graphs \cite{W09}, the existence of a maximal 
set of $d^2$ equiangular lines in $\Cd$, i.e., a SIC, is conjectured by Zauner to 
hold for all dimensions \cite{Zau10}, \cite{ACFW18}, and for quaternionic lines 
(see \cite{CKM16}) no explicit examples of maximal sets of quaternionic equiangular lines were known 
until Et-Taoui \cite{ET20} presented the following (maximal) set of six %tight 
equiangular lines in $\HH^2$ given by the unit vectors
\begin{align}
\label{ETlines}
&v_1=\pmat{ 1\cr 0 }, \
v_2=\pmat{ {\sqrt{2}\over\sqrt{5}} \cr {\sqrt{3}\over\sqrt{5}} }, \
v_3=\pmat{ {\sqrt{2}\over\sqrt{5}} \cr -{\sqrt{3}\over4\sqrt{5}}+{3\over4}i }, \
v_4=\pmat{ {\sqrt{2}\over\sqrt{5}} \cr -{\sqrt{3}\over4\sqrt{5}}-{1\over4}i+{1\over\sqrt{2}}j }, \cr
&v_5=\pmat{ {\sqrt{2}\over\sqrt{5}} \cr -{\sqrt{3}\over4\sqrt{5}}-{1\over4}i-{1\over2\sqrt{2}}j+{\sqrt{3}\over2\sqrt{2}}k }, \
v_6=\pmat{ {\sqrt{2}\over\sqrt{5}} \cr
-{\sqrt{3}\over4\sqrt{5}}-{1\over4}i-{1\over2\sqrt{2}}j-{\sqrt{3}\over2\sqrt{2}}k },
\end{align}
%
%$$
%v_1=\pmat{ 1\cr 0 }, \
%v_2=\pmat{ {\sqrt{2}\over\sqrt{5}} \cr {\sqrt{3}\over\sqrt{5}} }, \
%v_3=\pmat{ {\sqrt{2}\over\sqrt{5}} \cr -{\sqrt{3}\over4\sqrt{5}}+{3\over4}i }, \
%v_4=\pmat{ {\sqrt{2}\over\sqrt{5}} \cr -{\sqrt{3}\over4\sqrt{5}}-{1\over4}i+{1\over\sqrt{2}}j },  $$
%$$
%v_5=\pmat{ {\sqrt{2}\over\sqrt{5}} \cr -{\sqrt{3}\over4\sqrt{5}}-{1\over4}i-{1\over2\sqrt{2}}j+{\sqrt{3}\over2\sqrt{2}}k }, \
%v_6=\pmat{ {\sqrt{2}\over\sqrt{5}} \cr
%-{\sqrt{3}\over4\sqrt{5}}-{1\over4}i-{1\over2\sqrt{2}}j-{\sqrt{3}\over2\sqrt{2}}k }, $$
which are said to have ``projective symmetry group'' $A_6$.
They are unique up to projective unitary equivalence, 
and were obtained by solving the system of polynomials giving the equiangularity, i.e.,
$$ |\inpro{v_j,v_k}|^2 = {2\over5}, \quad j\ne k, \qquad |\inpro{v_j,v_j}|^2=1,  $$
in the variables $z_a,w_a\in\CC^2$, where $v_a=z_a+w_a j\in\HH^2$, and 
\begin{equation}
\label{inprodefn}
\inpro{v,w} := \sum_{j=1}^d \overline{v_j}w_j \quad
\hbox{(Euclidean inner product)}.
\end{equation}

In this paper, we give a simple presentation of the six equiangular lines in $\HH^2$,
in an attempt to understand quaternionic equiangular lines in general
(in the context of maximal sets of real and complex equiangular lines).
The main presentation roughly follows our investigation, showing the motivation, 
definitions required, and technical details, such as computations undertaken 
in Magma. We summarise the key findings:

\begin{itemize}
\item The six equiangular lines are seen to be the orbit of an (irreducible) 
quaternionic reflection group $H_{720}$ of order $720$, 
which is isomorphic to $2\cdot A_6$, the double cover of $A_6$. 
For the purpose of comparison with (\ref{ETlines}), the corresponding 
presentation of the lines is given by the equal-norm vectors
\begin{align}
\label{nicelines}
& \pmat{\sqrt{2}+\sqrt{10}\cr \sqrt{3}-i+j+\sqrt{3}k}, \
 \pmat{\sqrt{2}+\sqrt{10}\cr 2i-2j}, \
 \pmat{\sqrt{2}+\sqrt{10}\cr -\sqrt{3}-i+j-\sqrt{3}k}, \cr
& \pmat{-\sqrt{3}-i-j-\sqrt{3}k \cr \sqrt{2}+\sqrt{10}}, \
\pmat{2i+2j \cr \sqrt{2}+\sqrt{10}}, \
\pmat{\sqrt{3}-i-j+\sqrt{3}k \cr \sqrt{2}+\sqrt{10}}.
\end{align}
\item The lines are also the orbit (of a vector in $\HH^2$) under the action of 
a subgroup of $H_{720}$ of order $24$, which is the complex reflection subgroup with Shephard-Todd 
number $4$.
Interestingly, the orbit of another (complex-valued) vector under this subgroup 
gives the four equiangular lines in $\CC^2$.
\item The equiangular lines are not roots of the reflection group $H_{720}$, which partly explains 
why they have only recently been found.
Indeed, the stabiliser of a line is a reflection free subgroup of order $120$ which has
a faithful action on the line.
\item The projective action of $H_{720}\cong 2\cdot A_6$ 
%	(which has centre $\{-I,I\}$) 
(which has centre the scalar matrices $\{-1,1\}$) 
on the six lines is that of $A_6$, i.e., there are exactly two elements $\pm g$ 
of the reflection group which give any even permutation of the lines.
\item The quaternionic reflection groups have been classified, and $H_{720}$ is Cohen's
group of type $O_2$ \cite{C80} (page 320). This is said to be Blichfeldt's collineation
group (C) for $\CC^4$ \cite{B17} (page 142). 
We consider Blichfeldt's groups (A), (C) and (K) in detail.
These are neither presented as reflection groups nor as collineation groups for quaternionic lines,
but are seen to give nested (irreducible) quaternionic reflection groups which permute various
finite sets of quaternionic lines.
\item In principal, it is possible to go from the abstract group $2\cdot A_6$ to its 
rank $2$ quaternionic representation $H_{720}$, and then the six quaternionic 
equiangular lines in $\HH^2$ (each one of which is fixed by a reducible
subgroup of order $120={720\over 6}$). 
This provides a general method for constructing 
nice (highly symmetric) sets of quaternionic lines from abstract groups.
\end{itemize}

We assume some %basic 
familiarity with the 
quaternions $\HH$, which are a noncommutative division algebra, and the linear
algebra over them \cite{C80}, \cite{Z97}, \cite{W20}, \cite{V21}.
This can be routine, e.g., matrix groups (don't swap the order of multiplication), 
to extremely involved, e.g., defining the determinant (it can't reasonably be done \cite{A96}).
We will provide appropriate commentary when required.

Throughout, we adopt the following conventions. The Euclidean quaternionic space $\Hd$ will 
be thought of as a right $\HH$-vector space (module), so that $\HH$-linear maps are applied on 
left, and we have
$$ A(v\ga) = (Av)\ga, \qquad A\in M_d(\HH), \ v\in\Hd, \ \ga\in\HH, $$
and the inner product (\ref{inprodefn}) satisfies
$$ \inpro{v\ga,w \gb} = \overline{\ga}\inpro{v,w}\gb, \qquad \ga,\gb\in\HH, \ v,w\in\Hd. $$
We use the ``complexification''
\begin{align}
\label{symplecticform}
	g=A+Bj\in M_d(\HH) & \Iff [g]_\CC:=\pmat{A&-B\cr \overline{B}&\overline{A}}\in M_{2d}(\CC), \cr
	v=z+wj\in\Hd & \Iff [v]_\CC:=\pmat{z\cr\overline{w}}\in\CC^{2d},
\end{align}
where $[\cdot]_\CC$ is $\CC$-linear, $[gv]_\CC= [g]_\CC[v]_\CC$, etc.

\section{The primitive quaternionic reflection group} % of order 720}

The projective symmetry group (see \cite{CW18}, \cite{W18}) of the six equiangular lines
%(in the sense of \cite{CW18})
is $A_6$, which is generated by the permutations
%(which is isomorphic to the symmetry group of the six equiangular lines)
\begin{equation}
\label{abpermdefs}
a=(1 2)(3 4) \quad\hbox{(order $2$)}, \qquad
b=(1 2 3 5)(4 6) \quad\hbox{(order $4$)}.
\end{equation}
In \cite{W20}, it was shown that unitary matrices % $U_a,U_b\in U_2(\HH)$ 
which give these permutations
%(which generate $A_6$)
%$$ a=(1 2)(3 4) \quad\hbox{(order $2$)}, \qquad
%b=(1 2 3 5)(4 6) \quad\hbox{(order $4$)} $$
of the six equiangular lines (\ref{ETlines}) are given by 
\begin{align}
\label{UaUbdefn}
U_a &= \pmat{
{2\over\sqrt{15}}i-{\sqrt{2}\over\sqrt{15}}j & {\sqrt{2}\over\sqrt{5}}i -{1\over\sqrt{5}}j \cr
{\sqrt{2}\over\sqrt{5}}i -{1\over\sqrt{5}}j
& -{2\over\sqrt{15}}i+{\sqrt{2}\over\sqrt{15}}j }, \cr
U_b &= \pmat{
{1\over2\sqrt{5}}+{1\over2\sqrt{3}}i+{3-\sqrt{5}\over2\sqrt{30}}j+{\sqrt{5}+1\over2\sqrt{10}}k
& {\sqrt{3}\over2\sqrt{10}}-{1\over2\sqrt{2}}i+{3+\sqrt{5}\over4\sqrt{5}}j
-{\sqrt{3}\over5+\sqrt{5}} k \cr
{\sqrt{3}\over2\sqrt{10}}+{1\over2\sqrt{2}}i +{3-\sqrt{5}\over4\sqrt{5}}j
+{\sqrt{3}\over5-\sqrt{5}}k &
-{1\over2\sqrt{5}}+{1\over2\sqrt{3}}i-{3\sqrt{5}+5\over10\sqrt{6}}j
+{\sqrt{5}-1\over2\sqrt{10}}k }.
\end{align}
These satisfy
$$ U_a^2=-I, \qquad U_b^4=-I, $$
and so they do not generate $A_6$, as a matrix group.

The six equiangular lines of (\ref{ETlines}) and the unitary matrices of (\ref{UaUbdefn})
can be put in the {\tt Magma} computer algebra system 
as a quaternion algebra over the field $\QQ(\zeta)$, 
$\zeta=\zeta_{120}=e^{2\pi i\over 120}$, 
which contains the required roots $\sqrt{2},\sqrt{3},\sqrt{5}$, e.g., 
\begin{verbatim}
   F:=CyclotomicField(24);  
   zeta:=RootOfUnity(24);  
   Q<i,j,k>:=QuaternionAlgebra<F|-1,-1>;
\end{verbatim}
Since $i=\zeta^{30}\in \QQ(\zeta)$, 
care must be taken to distinguish (or identify) it with the
$i$ provided by {\tt QuaternionAlgebra}. We did this by entering all quaternions
in the form $a_1+a_2 i+a_3 j +a_4 k$, where $a_j\in\RR\cap \QQ(\zeta)$ and
$i,j,k$ are the units for the quaternion algebra.
Since Magma could form, but not calculate with, the matrix group
(this feature has subsequently been added)
\begin{equation}
\label{Hdef}
H:=\langle U_a,U_b\rangle\subset U_2(\HH),
\end{equation}
it was constructed as a group of complex matrices via (\ref{symplecticform}), 
from which it was deduced that $H$ is the finite group of order $720$ with small 
group identifier {\tt <720, 409>}. This group is $2\cdot A_6$, the double cover of $A_6$,
which is the unique (abstract) group with 
$$ \hbox{composition series:} \qquad
\mat{2\cdot A_6 & \cr | & A_6 \cr * & \cr | & \ZZ_2 \cr  1 & } $$
In other words:
\begin{itemize}
\item $H$ is a rank $2$ faithful irreducible unitary representation of $2\cdot A_6$ over $\HH$, which could be obtained from the complex representations of the abstract
	group $2\cdot A_6$.
\end{itemize}
%I now outline the computations and reasoning
%which led me to conclude that $H$
%is a finite primitive quaternionic reflection group, i.e., is generated by
%reflections.
A {\bf quaternionic reflection} is defined to be a nonidentity unitary map $g\in U_d(\HH)$
%of finite order
which fixes a subspace of dimension $d-1$ of $\Hd$ (for us these will have finite order,
and sometimes this is taken as part the definition), i.e.,
$$ \rank(I-g)=d-1. $$
A nonzero vector in the orthogonal complement of the fixed subspace of a reflection
(which defines the fixed subspace) is called a {\bf root} of the reflection,
and the subspace it gives a {\bf root line}.
A (usually finite) group generated by quaternionic reflections is called a 
{\bf quaternionic reflection group} \cite{C80} 
or a {\bf symplectic reflection group} \cite{BST23} (when given as a complex matrix group).
The conjugate $h^{-1}gh$ in the unitary group $U_d(\HH)$, or $M_d(\HH)$, 
of a reflection $g$
is a reflection, since
$$ \rank(I-h^{-1}gh) = \rank(h^{-1}(I-g)h) = \rank(I-g). $$
Correspondingly, the reflection groups are classified up to conjugation in $U_d(\HH)$.
By directly observing that $H$ contains reflections, and then calculating the 
reflection group generated by these reflections, it was determined that
\begin{itemize}
%\item $H$ is a primitive quaternionic reflection group with $40$ reflections of order $3$.
\item $H$ is a primitive quaternionic reflection group with $40$ reflections, each of order $3$.
%\item $H$ is a primitive quaternionic reflection group of order $720$, with $40$ reflections, each of order $3$.
\item The $40$ reflections correspond to $20$ root lines (a reflection and its inverse have
	the same root line), which is the maximum possible.
\item $H$ is generated by three reflections, but not two.
\item The centre of $H$ is the scalar matrices $\pm I$.
%	, and these are the only elements that fix all six equiangular lines.
\end{itemize}
%It is easily verified that the centre of $H$ is the scalar matrices $\pm I$.
Since $U_a$ and $U_b$ permute the six equiangular lines, so does $H$, with
$\pm g$ giving the same even permutation of the lines. In this way, elements of
$H$ can be indexed by the permutation of the six equiangular lines that they give
(they cover this element of $A_6$).
$$ \mat{ \multicolumn{3}{c}{\hbox{\bf Conjugacy classes of $A_6$}} \\[0.2cm]
%&&\cr
\hbox{order}&\hbox{representative}&\hbox{length}\cr
1 & () & 1 \cr
2 & (12)(34) & 45 \cr
3 & (123)(456) & 40 \cr
3 & (123) & 40 \cr
4 & (1234)(56) & 90 \cr
5 & (12345) & 72 \cr
5 & (13452) & 72
} 
\qquad \qquad
\mat{ \multicolumn{4}{c}{\hbox{\bf Conjugacy classes of $2\cdot A_6$}} \\[0.2cm]
%&&\cr
\hbox{order}& \hbox{length} & 
\hbox{elements covered} 
%\hbox{elements covered} 
& \hbox{lines fixed} \cr
1 & 1 & () & 6 \cr
2 & 1 & () & 6 \cr 
\ 3^* & 40 & (123)(456) %{\tiny all reflections} 
& \hbox{none} \cr
3 & 40 & (123) & 3 \cr
4 & 90 & (12)(34) & 2 \cr
5 & 72 & (12345) & 1 \cr
5 & 72 & (13452) & 1 \cr
6 & 40 & (123)(456) & \hbox{none} \cr
6 & 40 & (123) & 3 \cr
8 & 90 & (1234)(56) & \hbox{none} \cr
8 & 90 & (1234)(56) & \hbox{none} \cr
10 & 72 & (12345) & 1  \cr
10 & 72 & (13452) & 1  \cr
\multicolumn{4}{l}{\hbox{\footnotesize * The $40$ reflections of order $3$}} 
} $$
In particular, we observe that
\begin{itemize}
\item The $40$ reflections form a  conjugacy class of elements of order three, corresponding to $(123)(456)$,
i.e., they fix no equiangular lines. 
Thus the equiangular lines are not roots of the reflections, 
nor of their orthogonal complements.
\item The $40$ elements of order three which are not reflections form a conjugacy class,
	corresponding to $(123)$, i.e., they fix  % of size $40$. 
		three equiangular lines and cycle the remaining three. 
\end{itemize}
%The latter fact follows from the fact that the abstract group $2\cdot A_6$ has two 
%conjugacy classes of elements of order $3$, each of length $40$.
%The table below gives the elements of $H\cong 2\cdot A_6$ and their
%corresponding action on the six equiangular lines.

%The six equiangular lines (\ref{ETlines}) are the orbit of $v_1=e_1$ under the action of 
%the reflection group $H$. Therefore the first column of each matrix in $H$ is a vector in 
%one of the lines.
%Any unitary image of these lines is an orbit 
%of a conjugate of %the group 
%$H$ in $U_2(\HH)$. 
%We now consider a presentation of the lines given by a conjugate of $H$, 
%for which a generating set of reflections take a simple form.

The six equiangular lines of (\ref{ETlines}) are the orbit of $v_1=e_1$ under the action of
the reflection group $H$. Therefore, the first column of each matrix in $H$ is a vector in
one of the lines.
Any unitary image of these lines is a set of six equiangular lines, which is an orbit
of the corresponding conjugate of the group $H$ in $U_2(\HH)$.
We now consider such a presentation of the lines given by a conjugate of $H$,
for which a generating set of reflections takes a simple form.

\section{The Blichfeldt generators}

%We now consider the reflection group $H$. 
The (irreducible) quaternionic reflection groups were classified in Cohen \cite{C80}.
Cohen classifies the primitive quaternionic reflection groups by whether 
their complexification is primitive or not (also see \cite{S23}). 
For our group $H$, the complexifications of the generators $U_a$ and $U_b$ 
are elements of order $4$ and $8$ with eigenvalues $i,i,-i,-i$ 
and $\pm\sqrt{i},\pm i\sqrt{i}$. 
It is easily verified that the $1$-dimensional eigenspaces for 
$[U_b]_\CC$ have trivial intersection with the $2$-dimensional 
ones for $[U_a]_\CC$, and so the complexification of $H$ is primitive. 
Cohen gives a list of $16$ ``exceptional'' primitive quaternionic reflection groups
with primitive complexifications, in dimensions $1,2,3,4,5$, of which there are six 
for $\HH^2$, including a unique one of order $720$. Therefore

\begin{itemize}
\item $H$ is the unique primitive quaternionic reflection group of order $720$.
\end{itemize}

Cohen \cite{C80} describes the primitive quaternionic reflection group of order $720$ as Blichfeldt's
``primitive simple group of collineations in four variables (C) of order $360\phi$''.
The generators for Blichfeldt's group (C) given in \cite{B17} (page 141) are, in the 
notation of the day:
$$ F_1=(1,1,\go,\go^2), \quad \go=-{1\over2}+{\sqrt{3}\over 2}i, $$
$$ F_2: \ x_1={1\over\sqrt{3}}(x_1'+\sqrt{2}x_4'),\ 
           x_2={1\over\sqrt{3}}(-x_2'+\sqrt{2}x_3'),\
           x_3={1\over\sqrt{3}}(\sqrt{2}x_2'+x_3'),\
           x_4={1\over\sqrt{3}}(\sqrt{2}x_1'-x_4'), $$
$$ F_3: \ x_1={1\over2}(\sqrt{3}x_1'+x_2'), \ 
	  x_2={1\over2}(x_1'-\sqrt{3}x_2'), \
	  x_3=x_4', \ 
	  x_4=x_3', $$
$$ F_4: \ x_1=x_2', \ 
          x_2=x_1', \
	  x_3=-x_4', \
	  x_4=-x_3', $$
corresponding to the ``substitutions of the alternating group'' (permutations)
$(abc)$, $(ab)(cd)$, $(ab)(de)$, $(ab)(ef)$, respectively. 
It is one of the $30$ types of ``primitive collineation groups in four complex variables'',
and is simple. 
Effectively, it is a projective representation of the alternating group $A_6$ 
(a simple group of order $360$) 
given as a group of matrices (to be factored by the order $\phi$ subgroup 
of scalar matrices). 
Since, as the name suggests, collineation groups map lines to lines (here in $\CC^4$),
I initially thought the permutation action was on six lines in $\CC^4$, but in fact 
the action is conjugation on a self normalising index $6$ subgroup. 
We have now seen -- over a hundred years later --
that it also corresponds to a permutation action on six quaternionic lines.

In modern matrix notation, the above generators are
$$ \pmat{1&0&0&0\cr 0&1&0&0\cr 0&0&\go&0\cr 0&0&0&\go^2},\ 
{1\over\sqrt{3}}\pmat{1&0&0&\sqrt{2}\cr
                      0&-1&\sqrt{2}&0\cr
                      0&\sqrt{2}&1&0\cr
                      \sqrt{2}&0&0&-1}, \
\pmat{ {\sqrt{3}\over2} & {1\over2} & 0 & 0 \cr
       {1\over2} & -{\sqrt{3}\over2} & 0 & 0 \cr
       0 & 0 & 0 & 1 \cr
       0 & 0 & 1 & 0},\
\pmat{0&1&0&0 \cr
      1&0&0&0 \cr
      0&0&0&-1 \cr
      0&0&-1&0}. $$
These all have determinant $1$, and generate a group $G$ of order $1440$, 
with centre the scalar matrices $\langle i\rangle$, i.e., $\phi=4$, and $G/Z(G) \cong A_6$.
There is a related subgroup (A) of order $60\phi$ and a supergroup (K) of order $720\phi$ 
given by generators:
$$ (A):\ F_1,F_2,F_3, \qquad (C):\ F_1,F_2,F_3,F_4, \qquad (K):\  F_1,F_2,F_3,F_4,F'', $$
where $F''$ is given by 
$$ F'': \ x_1=x_2', \
          x_2=-x_1', \
          x_3=x_4', \
          x_4=-x_3', \qquad
\pmat{0&1&0&0 \cr -1&0&0&0 \cr 0&0&0&1 \cr 0&0&-1&0}. $$
These both have centre $\langle i\rangle$, with $G/Z(G)$ being $A_5$ and $S_6$, respectively.
Blichfeldt's matrices are not in the form (\ref{symplecticform}) for symplectic matrices. 
The first can be put
in this form by conjugation with the permutation matrix 
$P=[e_1,e_3,e_2,e_4]$ of order $2$, i.e.,
$$ P F_1 P = \pmat{1&0&0&0 \cr 0&\go&0&0 \cr 0&0&1&0 \cr 0&0&0&\overline{\go}},
\quad PF_2 P ={1\over\sqrt{3}} \pmat{
        -1 & 0 & 0 & -\sqrt{2} \cr
        0 & -1 & -\sqrt{2} & 0 \cr
        0 & -\sqrt{2} & 1 & 0 \cr
        -\sqrt{2} & 0 & 0 & 1 }, 
%\quad P:=\pmat{1&0&0&0 \cr 0&0&1&0 \cr 0&1&0&0\cr 0&0&0&1},
$$
but the second does not have the form (\ref{symplecticform}). 
By multiplying $F_2,F_3,F_4$ by $\pm i$, 
which does not change the determinant, we obtain matrices which $P$ conjugates to the desired form. 
The corresponding collineation groups, thus obtained, have centre $\langle -1\rangle$, 
and can be viewed as subgroups of $U_2(\HH)$. 
Consider the matrices $a_1,\ldots,a_5$ so obtained from
%$PF_1^2P$, $P(iF_2)P$, $P(-iF_3)P$, $P(-iF_4)P$, $P(F'')P$, 
$$ PF_1^2P, \quad P(iF_2)P, \quad P(-iF_3)P, \quad  P(-iF_4)P,\quad P(-F'')P, $$
i.e.,  %of which the first four are
\begin{equation}
\label{Blichaform}
a_1=\pmat{1&0\cr0&\go^2}, \
a_2=\pmat{ {1\over\sqrt{3}}i & -{\sqrt{2}\over\sqrt{3}}k \cr
 -{\sqrt{2}\over\sqrt{3}}k & {1\over\sqrt{3}}i }, \
a_3=\pmat{{1\over2}k -{\sqrt{3}\over2}i &0\cr 0&k}, \
a_4=\pmat{k&0\cr0& -k},\
a_5=\pmat{j&0\cr0&j}.
\end{equation}
It is easily verified that 
$$ a_1^3=I, \quad a_2^2=a_3^2=a_4^2=a_5^2=-I, $$
and that
$$ a_1, \qquad a_1 a_2, \qquad a_2a_3, \qquad a_3a_4, $$
are reflections of order $3$, so that the groups $\langle a_1,\ldots,a_m\rangle $,
$1\le m\le 4$ are reflection groups, as is $\langle a_1,\ldots,a_5\rangle$. 
A simple calculation shows that groups $\langle a_1,a_2,a_3\rangle$,
$\langle a_1,a_2,a_3,a_4\rangle$,
$\langle a_1,a_2,a_3,a_4,a_5\rangle$ are exactly, i.e., 
have precisely the same elements, as the quaternionic reflection groups of
orders $120$, $720$, $1440$ having 
a primitive complexification given by the root systems $O_1,O_2,O_3$ of \cite{C80} (Table II).

The irreducible quaternionic reflection group $\langle a_1,a_2\rangle$ of order $24$ 
does not have a primitive complexification, and hence it can be viewed as a (primitive) 
complex reflection group in $U_2(\CC)$.
It is instructive to see how this happens, 
which leads to a final tweak of the generators to make this apparent.

%The action of the generators of $a_1$ and $a_2$ on $\CC^4$ is given by 

\begin{example} Consider the irreducible quaternionic reflection group $G=\langle a_1,a_2\rangle$.
The action of the complexification of the generators on $\CC^4$
%	, i.e., $\HH^2$ thought of as a $\CC G$-module, 
is given by 
$$ [a_1]_\CC=\pmat{ 1&0&0&0 \cr 0&-{1\over2}-{\sqrt{3}\over2}i&0&0 \cr
          0&0&1&0 \cr 0&0&0&-{1\over2}+{\sqrt{3}\over2}i}, \quad 
[a_2]_\CC = {i\over\sqrt{3}} \pmat{
        1 & 0 & 0 & \sqrt{2} \cr
        0 & 1 & \sqrt{2} & 0 \cr
        0 & \sqrt{2} & -1 & 0 \cr
        \sqrt{2} & 0 & 0 & -1 },  $$
and so we have  $\CC G$-invariant subspaces
$$ V_1=\spam_\CC\{e_1,e_4\}, \qquad V_2=\spam_\CC\{e_2,e_3\}, $$
of $\CC^4$, i.e., the complexification is not irreducible.
In other words, the $\CC G$-module $\HH^2$ (of dimension $4$) is not irreducible, as
it has $\CC G$-invariant subspaces
%$$ W_1=\spam_\CC\{e_1 i,e_2 j\}, \qquad W_2=\spam_\CC\{e_1,ke_2\}. $$
$$ W_1=\spam_\CC\{e_1,e_2 k\}, \qquad W_2=\spam_\CC\{e_1 i,e_2 j\}, $$
and can be written $\HH^2=W\oplus_\CC Wj$, where $W$ is either of these.
By changing the standard basis of the $\HH G$-module $\HH^2$ to one for 
a $\CC G$-invariant subspace, we obtain a representation where the 
matrices have complex entries. For example, take the basis $B=\{e_1 i,e_2 j\}$ for $W_1$, 
which has basis map
%$$ u=[e_1 i,e_2 j] = \pmat{i&0\cr0& j}, $$
\begin{equation}
\label{udefn}
u=[e_1 ,e_2 k] = \pmat{1&0\cr0& k},
\end{equation}
to obtain
	$$ [a_1]_B=u^{-1} a_1 u = \pmat{1&0\cr0&\go}, \quad 
 [a_2]_B=u^{-1} a_2 u = \pmat{ {1\over\sqrt{3}}i & {\sqrt{2}\over\sqrt{3}} \cr
 -{\sqrt{2}\over\sqrt{3}} & -{1\over\sqrt{3}}i }, \qquad
	 \go:=-{1\over2}+{\sqrt{3}\over 2}i.	$$
\end{example}

In view of the above discussion, 
we take as generators for the Blichfeldt groups, the matrices
$$ b_j:=u^{-1}a_j u\in U_2(\HH), \qquad j=1,\ldots,5, $$
where $u$ is given by (\ref{udefn}), which are given by
\begin{equation}
\label{Blichaform}
b_1=\pmat{1&0\cr0&\go}, \
b_2=\pmat{ {1\over\sqrt{3}}i & {\sqrt{2}\over\sqrt{3}} \cr
 -{\sqrt{2}\over\sqrt{3}} & -{1\over\sqrt{3}}i }, \
b_3=\pmat{ -{\sqrt{3}\over2}i+ {1\over2}k &0\cr 0&k}, \
b_4=\pmat{k&0\cr0& -k},\
b_5=\pmat{j&0\cr0&j},
\end{equation}
and define a sequence of nested irreducible quaternionic reflection groups
\begin{equation}
\label{H720defn}
%H_3=\langle b_1 \rangle, \quad
H_{24}=\langle b_1,b_2 \rangle, \quad
H_{120}=\langle b_1,b_2,b_3 \rangle, \quad
H_{720}=\langle b_1,b_2,b_3,b_4 \rangle, \quad
H_{1440}=\langle b_1,b_2,b_3,b_4,b_5 \rangle, 
\end{equation}
indexed by their orders. The first of these is the Shephard-Todd complex reflection 
group number $4$. We also let $H_3=\langle b_1 \rangle$, which is 
a reducible reflection group of order $3$.

\section{A nice presentation of the six equiangular lines}

Each of the six lines of (\ref{ETlines}) is fixed by a subgroup of $H$ of order $120$ (index $6$),
which is therefore reducible. The corresponding equiangular lines for $H_{720}$ can therefore 
be found as the orbit of a vector which is fixed by a reducible subgroup of order $120$.
The group $H_{720}$ has two subgroups of order $120$ up to conjugacy, which are 
isomorphic to $2\cdot A_5$ (the binary icosahedral group), with the class
length being six in both cases. One is $H_{120}$ which is an irreducible reflection group,
and the other is reducible and contains no reflections. 
By taking a reducible subgroup of order $120$, and finding a
line that it fixes (more detail later), one obtains the following ``fiducial'' vector
%Before going into detail, we now give a fiducial vector for the six equiangular lines
\begin{equation}
\label{wdefn}
w := \pmat{\sqrt{2}+\sqrt{10}\cr \sqrt{3}-i+j+\sqrt{3}k},
\end{equation}
whose orbit under $H_{720}$ is six equiangular lines.
With the ordering: %$w$, $b_1 w$, $b_1^2 w$, $b_2 w$, $b_1b_2 w$, $b_1^2 b_2 w$
$$ w, \quad b_1 w, \quad b_1^2 w, \quad b_2 w, \quad b_1b_2 w, \quad b_1^2 b_2 w, $$
they are those of (\ref{nicelines}), i.e., 
$$ \pmat{\sqrt{2}+\sqrt{10}\cr \sqrt{3}-i+j+\sqrt{3}k}, \
 \pmat{\sqrt{2}+\sqrt{10}\cr 2i-2j}, \
 \pmat{\sqrt{2}+\sqrt{10}\cr -\sqrt{3}-i+j-\sqrt{3}k}, $$
$$ \pmat{-\sqrt{3}-i-j-\sqrt{3}k \cr \sqrt{2}+\sqrt{10}}, \
\pmat{2i+2j \cr \sqrt{2}+\sqrt{10}}, \
\pmat{\sqrt{3}-i-j+\sqrt{3}k \cr \sqrt{2}+\sqrt{10}}.  $$

We observe that the six equiangular lines are an orbit of the Shephard-Todd complex reflection 
group $H_{24}$, and the absolute value of the ratio of coordinates is the golden ratio.  
With this ordering, Blichfeldt's generators correspond to the permutations
%The action of the generators $b_1,\ldots,b_4$ on the lines above are
$$ b_1:\ (123)(456), \quad b_2:\ (14)(36), \quad b_3:\ (23)(45), \quad b_4:\ (13)(46). $$
In particular, $b_2$ fixes lines $2$ and $5$. 
The stabiliser in $H_{720}$ of the line given by the fiducial vector $w$ is generated by $b_3$ and an element
which can have order $3,5,6,10$, e.g., the following matrix of order $5$
\begin{equation}
\label{g2def}
g_2:=\pmat{ {1\over\sqrt{3}}i & {1\over 2\sqrt{6}}+{1\over 2\sqrt{2}}i +{1\over 2\sqrt{2}}j -{\sqrt{3}\over2\sqrt{2}}k \cr
   {1\over\sqrt{6}}+{1\over\sqrt{2}}j & -{1\over2}+ {1\over2\sqrt{3}} i }: \ 
   (24356).
\end{equation}

From the indexing of reflections by elements of $A_6$, it is immediate that $H_{720}$ cannot be 
generated by two reflections, but it can be by three, e.g., since we following action on the equiangular lines
$$ b_1:\ (123)(456), \quad b_1b_2:\ (156)(234),  \quad b_2b_3:\ (145)(263),
\quad b_3b_4:\ (123)(465), $$
it follows that $H_{720}$ is generated by the three reflections 
%$b_1,\, b_1b_2,\, b_3b_4$, which have relatively simple form
\begin{equation}
\label{nicerefgens}
b_1 = \pmat{1&0\cr0&-{1\over2}+{\sqrt{3}\over2}i}, \quad
b_1b_2 = \pmat{ {1\over\sqrt{3}}i & {\sqrt{2}\over\sqrt{3}} \cr {1\over\sqrt{6}} -{1\over\sqrt{2}i} & {1\over2}+{1\over2\sqrt{3}}i  }, \quad
b_3b_4 = \pmat{ -{1\over2}+{\sqrt{3}\over2}j & 0 \cr 0 & 1 }. 
\end{equation}

We now give an explicit conjugation 
$$ AHA^{-1}=H_{720}, \qquad A\in M_d(\HH), $$
which maps the lines $(v_j)$ of (\ref{ETlines}) to the lines $(w_j)$ of (\ref{nicelines}), i.e.,
$$ Av_j = w_j\ga_j, \qquad\exists \ga_j\in\HH. $$
If $\pm h_a$ and $\pm h_b$ are the elements of $H_{720}$ which give the permutations
$a$ and $b$ of (\ref{abpermdefs}) of the lines given by $(v_j)$, then a suitable $A$ is given by
one of each of the equations
$$ AU_aA^{-1}=\pm h_a, \quad AU_bA^{-1}=\pm h_b \Iff
AU_a=\pm h_a A, \quad AU_b=\pm h_b A.  $$
The latter presentation gives a homogeneous system of the linear equations in the 
entries of $A$, which we were able to solve (for a suitable choice of the $\pm$). 
In this way, we obtained the matrix
\begin{equation}
\label{Adefn}
A:= \pmat{ -{1\over2}-{\sqrt{5}\over2}
+\ga i
-\ga j
+({1\over2}+{\sqrt{5}\over2})k &
-2\gb+i-j+2\gb k \cr
-\gam 
+({\sqrt{2}\over2}+2-{\sqrt{2}\sqrt{5}\over2})i &
1-{3\sqrt{2}\over2}+\sqrt{5}-{\sqrt{2}\sqrt{5}\over2}-\gd i },
\end{equation}
$$ \mat{\ga= {\sqrt{2}\sqrt{3}\over3} + {\sqrt{2}\sqrt{3}\sqrt{5}\over3} -{5\sqrt{3}\over6}
-{\sqrt{3}\sqrt{5}\over6}, \qquad
\gb={\sqrt{2}\sqrt{3}\over3} -{\sqrt{3}\sqrt{5}\over6}} , $$
$$ \mat{\gam={\sqrt{2}\sqrt{3}\sqrt{5}\over6}+{\sqrt{2}\sqrt{3}\over2}-{2\sqrt{3}\over3}, \qquad
\gd= {\sqrt{2}\sqrt{3}\sqrt{5}\over6}+{\sqrt{3}\sqrt{5}\over3}-{\sqrt{2}\sqrt{3}\over6}+{\sqrt{3}\over3}}. $$
The diagonal entries of the scalar matrix $A^*A$ are 
$c={20\over3}(4-{\sqrt{5}\over5}(5\sqrt{2}-4)-\sqrt{2})>0$. Thus 
$U={1\over\sqrt{c}}A$ is a unitary matrix which gives the desired conjugation. 
However, the entries of $U$ are not in the cyclotomic field in which we did our calculations.

The conjugates of the $U_a$ and $U_b$ of (\ref{UaUbdefn}) give the following 
generators for $H_{720}$
\begin{align}
\label{UaUbconjugates}
AU_aA^{-1} &= \pmat{-{1\over2\sqrt{3}}i-{1\over\sqrt{3}}j+{1\over2}k & 
{1\over2\sqrt{6}}+{1\over2\sqrt{2}}i-{1\over2\sqrt{2}}j-{1\over2\sqrt{6}}k \cr
-{1\over2\sqrt{6}}+{1\over2\sqrt{2}}i-{1\over2\sqrt{2}}j-{1\over2\sqrt{6}}k & 
-{1\over\sqrt{3}}i-{1\over\sqrt{3}}j}, \cr
AU_bA^{-1} & = \pmat{ {1\over2}+{1\over\sqrt{3}}i-{1\over2\sqrt{3}}j &
{1\over2\sqrt{6}}+{1\over2\sqrt{2}}i+{1\over2\sqrt{2}}j+{1\over2\sqrt{6}}k \cr
-{1\over2\sqrt{6}}-{1\over2\sqrt{2}}i-{1\over2\sqrt{2}}j+{1\over2\sqrt{6}}k &
-{1\over2}+{1\over2\sqrt{3}}i+{1\over\sqrt{3}}j }.
\end{align}

%$$ A:= \pmat{ -{1\over2}-{\sqrt{5}\over2}+(-5{\sqrt{3}\over6}+{\sqrt{2}\sqrt{3}\over3}-
%{\sqrt{3}\sqrt{5}\over6}+ {\sqrt{2}\sqrt{3}\sqrt{5}\over3})i
%+({\sqrt{3}\sqrt{5}\over6}- {\sqrt{2}\sqrt{3}\sqrt{5}\over3}+{5\sqrt{3}\over6}
%-{\sqrt{2}\sqrt{3}\over3})j+({1\over2}+{\sqrt{5}\over2} &
%{\sqrt{3}\sqrt{5}\over3}-{2\sqrt{2}\sqrt{3}\over3}+i-j+(-{\sqrt{3}\sqrt{5}\over3}
%+{2\sqrt{2}\sqrt{3}\over3}k \cr
%-{\sqrt{2}\sqrt{3}\sqrt{5}\over 6}-{\sqrt{2}\sqrt{3}\over2}+{2\sqrt{3}\over3}
%+({\sqrt{2}\over2}+2-{\sqrt{2}\sqrt{5}\over2})i &
%1-{3\sqrt{2}\over2}+\sqrt{5}-{\sqrt{2}\sqrt{5}\over2}+(-{\sqrt{2}\sqrt{3}\sqrt{5}\over6}
%-{\sqrt{3}\sqrt{5}\over3}+{\sqrt{2}\sqrt{3}\over6}-{\sqrt{3}\over3}i }, $$

\section{The stabiliser groups of the six equiangular lines}

We now consider the action of the stabiliser group of one of the six equiangular lines
on the line that it fixes. This action of a group of order $120$ on $\HH$ is far
from trivial 
(the situation for parabolic subgroups of reflection groups 
and highly symmetric tight frames \cite{BW13}), 
in fact it is faithful.
We first give some generalities about groups which fix a line, and what their 
action on the line can be.

%We are interested in the lines associated with the orbit of a nonzero vector $v$ 
%under that action of group 
Let $G\subset M_d(\HH)$ be a group, 
and consider its action on a nonzero vector $v\in\Hd$.
Two vectors $gv$ and $hv$ in the orbit give the same line if there is some
scalar $\ga\in\HH$ for which
$$ gv=hv\ga \Iff (h^{-1}g)v = v\ga. $$
Since such an $\ga$ is unique, we use the notation $\ga_g=\ga_{g,v}$ for the nonzero scalar with
\begin{equation}
\label{alphagdefn}
g v = v \ga_g.
\end{equation}
The {\bf stabiliser} (in $G$) of the line $L=\spam_\HH\{v\}$ 
given by a nonzero vector $v\in\Hd$, or 
the {\bf projective stabiliser} (in $G$) of the vector $v$, is defined by
$$ G_L= G_v := \{g\in G: gv =v\ga,\exists\ga\in\HH\}, $$
(this depends only on $L$), and the corresponding set of scalars are denoted by
$$ \HH_{G,v}^* = \{\ga_g\in\HH: gv=v\ga_g,g\in G_v\}. $$
In view of (\ref{alphagdefn}),
the matrix representation of $g|_L:L\to L$, the restriction of $g\in G_L$ to $L$, 
with respect to the $\HH$-basis $[v]$ for $L$ is
$ [g]:=[g|_L]_{[v]} = [\ga_g]$, and so 
$$ G_L\to\HH^*:g\mapsto \ga_g, \qquad G_L\to M_1(\HH):g\mapsto [\ga_g], $$
are group homomorphisms. Here $\HH^*$ denotes the group
of nonzero quaternions under multiplication. If $G_L$ is unitary, then the images above are 
unit scalars and unitary matrices, respectively.

%For $G$ unitary, $\HH_{G,v}^*$ consists of unit scalars, i.e.,  elements of $U_1(\HH)$,
%and is the group denoted by $K_v\subset U_1(\HH)$ in \cite{C80}.

\begin{proposition} Let $G\subset M_d(\HH)$ be a group,  %finite or not, 
and $v\in\Hd$ a nonzero vector. 
%let $$ G_v := \{g\in G: gv =v\ga,\exists\ga\in\HH\}, \qquad \HH_{G,v}^* = \{\ga_g\in\HH: gv=v\ga_g,g\in G_v\}. $$
Then the projective stabiliser $G_v$ is a subgroup of $G$, 
and $\HH_{G,v}^*$ is a subgroup of $\HH^*$ (being the
homomorphic image of $G_v\to\HH^*:g\mapsto\ga_g$), with
$$ G_v=G_{v\gb}, \qquad \HH_{G,v\gb}^* = \gb^{-1}\HH_{G,v}^*\gb, \qquad \gb\in\HH^*. $$
The projective stabilisers of points (lines) on the same $G$-orbit 
are conjugate and hence are isomorphic, i.e.,
$$ G_{hv}  = h^{-1} G_v h, \qquad h\in G. $$
%	Furthermore, $G_v$ depends only on the line through $v$, 
%whilst $\HH_{G,v}^*$ does not, i.e.,
\end{proposition}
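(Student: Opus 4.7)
The plan is to verify each claim by direct manipulation of the defining relation $gv=v\ga_g$, being careful that quaternionic scalars act on the right and do not commute.

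First I would establish that the map $\pi\colon G_v\to\HH^*$, $g\mapsto\ga_g$, is a well-defined homomorphism. Uniqueness of $\ga_g$ follows from $v\ne 0$, since the map $\HH\to\Hd$, $\ga\mapsto v\ga$, is injective; the identity lies in $G_v$ with associated scalar $1$. For $g,h\in G_v$, associativity gives
$$ (gh)v=g(v\ga_h)=(gv)\ga_h=v(\ga_g\ga_h), $$
so $gh\in G_v$ with $\ga_{gh}=\ga_g\ga_h$. Since $g\in G$ is invertible as a matrix, $\ga_g\ne 0$, and applying $g^{-1}$ to $gv=v\ga_g$ yields $g^{-1}v=v\ga_g^{-1}$. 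Hence $G_v$ is a subgroup of $G$, $\pi$ is a group homomorphism, and $\HH_{G,v}^*=\pi(G_v)$ is a subgroup of $\HH^*$.

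For the behaviour under rescaling $v\mapsto v\gb$, the key computation is
$$ g(v\gb)=(gv)\gb=v(\ga_g\gb)=(v\gb)(\gb^{-1}\ga_g\gb), $$
valid for any $g\in G_v$ and $\gb\in\HH^*$. This shows $g\in G_{v\gb}$ with associated scalar $\gb^{-1}\ga_g\gb$; applying the symmetric computation with $\gb^{-1}$ in place of $\gb$ yields the reverse inclusion, so $G_v=G_{v\gb}$, and reading off the scalars gives $\HH_{G,v\gb}^*=\gb^{-1}\HH_{G,v}^*\gb$. For the orbit statement, given $h\in G$ and any $k\in G$, the identity $k(hv)=(hv)\ga$ rearranges by associativity to $(h^{-1}kh)v=v\ga$, so $k\in G_{hv}$ if and only if $h^{-1}kh\in G_v$; this exhibits $G_{hv}$ as a conjugate of $G_v$ in $G$ and hence as an isomorphic subgroup.

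There is no real obstacle; the proof is essentially bookkeeping. The one step meriting attention is the conjugation $\ga_g\mapsto\gb^{-1}\ga_g\gb$ that appears under rescaling of $v$: this is an artefact of the right action of $\HH$ on $\Hd$, and is precisely why $\HH_{G,v}^*$ cannot be claimed to depend only on the projective class of $v$, but only up to conjugation in $\HH^*$.
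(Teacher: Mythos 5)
Your proof is correct and follows essentially the same route as the paper's: both verify the homomorphism property $\ga_{gh}=\ga_g\ga_h$ directly from $gv=v\ga_g$, and obtain $\HH_{G,v\gb}^*=\gb^{-1}\HH_{G,v}^*\gb$ and the conjugacy of stabilisers on an orbit by the same right-action bookkeeping. The only difference is that you spell out the subgroup axioms and the equality $G_v=G_{v\gb}$ explicitly, which the paper leaves implicit; note also that your conclusion $G_{hv}=hG_vh^{-1}$ matches the paper's proof, even though the displayed statement writes the conjugation the other way round.
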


\begin{proof}
We have already observed that $\HH_{G,v}^*$ is the isomorphic image of
$g\mapsto\ga_g$, or we can argue directly from 
(\ref{alphagdefn}) that
$$ v\ga_{gh}= (gh)v = g(hv) 
	= g(v\ga_h) 
	= (gv)\ga_h 
	= v\ga_g\ga_h 
\Implies \ga_{gh}=\ga_g\ga_h. $$
For any $\gb\in\HH^*$, we have
%$$ G_{v\gb}=\{g:gv\gb=v\gb\ga,\exists\ga\}
%	=\{g:gv=v\gb\ga\gb^{-1},\exists\ga\} \subset G_v, $$
%so that $G_v= G_{(v\gb)\gb^{-1}}\subset G_{v\gb}$, and hence
%$G_v=G_{v\gb}$. Similarly,
$$ \HH_{G,v\gb}=\{\ga: gv\gb = v\gb\ga,\exists\ga\}
	=\gb^{-1}\{\gb\ga\gb^{-1}: gv = v\gb\ga\gb^{-1},\exists\ga\}\gb
	= \gb^{-1}\HH_{G,v}^*\gb, $$
$$ G_{hv}=\{g:g(hv)=hv\ga,\exists\ga\}
	= h\{h^{-1}gh:h^{-1} ghv=v\ga,\exists\ga\}h^{-1}
=h G_v h^{-1}. $$
\end{proof}

%The subgroup $G_v$ is normal in $G$ if and only if
%$$ h^{-1}G_v h = G_{hv}=G_v, \quad\forall h\in G
%\Iff \hbox{ $G$ fixes the line through $v$}
%\Iff G_v=G. $$
%Thus if $G$ is irreducible for $d\ge2$, the projective stabiliser subgroups
%are not are not normal, and in particular, are not the identity.  
%Thus the finite subgroups of $\HH^*$ are normal?

If $G\subset M_d(\HH)$ is irreducible with $d\ge2$,
e.g., $G=H_{720}$,
then $G_v$ is reducible for $v$ nonzero, and so is a proper subgroup of $G$. 

\begin{example}
For $G=H_{720}$ and a $v$ giving one of the six
equiangular lines, the stabiliser subgroup $G_v$ has order $120$, and is
isomorphic to $2\cdot A_5$, the double cover of $A_5$. Since $A_5$ is simple,
$G_v$ has normal subgroups of orders $1$, $2$ and $120$. 
%It is easily verified
%that $\HH_{G,v}^*$ has a least three elements (including $\ga_I=1$, $\ga_{-I}=-1$),
Since $\ga_{-I}=-1$, it follows (or by direct computation) 
that $\HH_{G,v}^*$, which is a quotient of $G_v$ by a normal subgroup,
	is $2\cdot A_5$. As a consequence, the orbit $(gv)_{g\in H_{720}}$ 
	has $720$ distinct vectors lying in six lines.
%The normal subgroup of order $2$ is central, and is generated by the scalar matrix
%$-I$. Since $-Iv=-v\ne v$
\end{example}

The faithful action of the stabiliser group of one of the six equiangular lines is
given by a subgroup $\HH_{G,v}^*$ of $\HH^*$ with order $120$. The finite 
subgroups of $\HH^*$, i.e., the reflection subgroups of $U_1(\HH)$,
have been classified by Stringham \cite{S81}
(also see \cite{C80}, \cite{CS03}). %, \cite{LT09}).

\begin{lemma}
\label{Stringhamlemma}
The finite subgroups of $\HH^*$, and hence of $U_1(\HH)$, up to conjugation, are
\begin{enumerate}[\rm(i)]
\item the cyclic group $\cC_m=\langle e^{2\pi i \over m}\rangle$, of order $m$, 
	%	where $\go_m:=e^{2\pi i \over m}$, 
		$m\ge 1$,
\item the binary dihedral group $\cD_{m}=\langle \cC_{2m},k\rangle$, of order $4m$, $m\ge 2$,
\item the binary tetrahedral group $\cT=\langle \cD_2,{-1+i+j+k\over 2} \rangle$, 
	of order $24$, %where $\zeta={-1+i+j+k\over 2}$,
\item the binary octahedral group $\cO=\langle \cT,{i-1\over\sqrt{2}}\rangle$, of order $48$, 
\item the binary icosahedral group $\cI=\langle \cD_2, {\tau-\tau^{-1}i-j\over2}\rangle$,
	$\tau={1\over2}(1+\sqrt{5})$, of order $120$.
\end{enumerate}
None of the groups above are isomorphic, and the nontrivial ones are
the reflection groups in $U_1(\HH)$.
\end{lemma}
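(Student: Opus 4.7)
My plan is to reduce the classification to the well-known classification of finite subgroups of $SO(3)$ via the spin double cover. First I would observe that any finite subgroup $G \subset \HH^*$ must consist of unit quaternions: the norm $N : \HH^* \to (\RR_{>0},\cdot)$, $q \mapsto |q|^2$, is a group homomorphism, and the only finite subgroup of $(\RR_{>0},\cdot)$ is trivial, so $G \subset S^3 = U_1(\HH)$. This identifies the two classification problems in the statement. Next I would invoke the standard $2$-to-$1$ covering $\pi : S^3 \to SO(3)$, $q \mapsto (v \mapsto qvq^{-1})$ acting on the pure imaginary quaternions $\spam_\RR\{i,j,k\}$, whose kernel is $\{\pm 1\}$. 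Since scalars commute, conjugation in $\HH^*$ acts on $S^3$ via conjugation by the unit-quaternion factor, so conjugacy classes of finite subgroups of $\HH^*$, of $S^3$, and (up to the kernel $\{\pm 1\}$) of $SO(3)$ all correspond.

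The main step is a case analysis on whether $-1 \in G$. If $-1 \in G$, then $G$ is the full preimage $\pi^{-1}(\pi(G))$, and by Klein's classification (citing, e.g., \cite{CS03}) $\pi(G)$ is, up to conjugation in $SO(3)$, one of $C_n$, $D_n$, $A_4$, $S_4$, or $A_5$, whose preimages are the binary polyhedral groups of orders $2n$, $4n$, $24$, $48$, $120$, which are precisely the $\cC_{2n}$, $\cD_n$, $\cT$, $\cO$, $\cI$ of the statement. If $-1 \notin G$, then $G$ has no element of order $2$, because $q^2 = 1$ in $S^3$ forces $q = \pm 1$; hence $\pi|_G$ is injective and $\pi(G)$ has no involutions. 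Scanning Klein's list, the only involution-free finite subgroups of $SO(3)$ are the cyclic groups of odd order, yielding $G \cong \cC_m$ with $m$ odd. The two cases together exhaust the list in the statement, with $\cC_m$ arising according to the parity of $m$.

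To finish, I would verify that the explicit generators in the statement produce these subgroups: each presentation given is the standard one for the corresponding binary polyhedral group, so the check amounts to confirming orders and defining relations of the listed quaternions (for instance, that $\frac{-1+i+j+k}{2}$ has order $6$ and together with $\cD_2$ generates a group of order $24$, and analogously for $\cO$ and $\cI$). Non-isomorphism is then immediate from a comparison of orders and abelianness: $\cC_{4m}$ is abelian while $\cD_m$ is not for $m \ge 2$, and the element-order structures of $\cT$, $\cO$, $\cI$ distinguish these groups from each other and from the cyclic or binary dihedral groups of the same order. The reflection-group claim is automatic in dimension one: under the paper's convention every non-identity element of $U_1(\HH)$ fixes a subspace of dimension $d-1 = 0$ (namely $\{0\}$) and so counts as a reflection, whence every nontrivial subgroup is a reflection group.

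The only nontrivial external input is Klein's classification of finite subgroups of $SO(3)$, and this is where all the real work sits; given that, the spin-cover lifting and the $\pm 1$ case split are both short. The main obstacle I anticipate, should one not wish to cite Klein, is simply reproducing that classification; in contrast, the verification that the listed generators realise the binary polyhedral groups is finite and mechanical.
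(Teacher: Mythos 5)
Your proof is correct, but it takes a genuinely different route from the paper's. The paper's own proof is essentially a citation: it takes the list of conjugacy classes of finite subgroups of $\HH^*$ directly from Cohen \cite{C80} (where the binary dihedral family is indexed by $m\ge1$), notes that $\cD_1=\langle k\rangle$ is conjugate to $\cC_4=\langle i\rangle$ to justify restricting to $m\ge2$, and then separates the isomorphism types by abelianness and by comparing element orders (e.g.\ $\cT\not\cong\cD_6$, $\cO\not\cong\cD_{12}$, $\cI\not\cong\cD_{30}$). You instead derive the classification from Klein's list of finite subgroups of $SO(3)$ via the spin cover $S^3\to SO(3)$, splitting on whether $-1\in G$; your norm argument forcing $G\subset S^3$, the observation that $-1$ is the unique involution of $S^3$ (so $-1\notin G$ forces $\pi|_G$ injective onto an involution-free, hence odd cyclic, subgroup), and your reading of the $d=1$ reflection condition are all sound. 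What your approach buys is self-containedness modulo a more classical and elementary external input (Klein rather than Stringham/Cohen), and it automatically explains the $m\ge2$ restriction on $\cD_m$ since the dihedral groups in $SO(3)$ are only listed from $n\ge2$. What it costs is two small debts the paper's citation avoids: you should note that a cyclic subgroup of $S^3$ of order $m$ is in fact \emph{conjugate} to $\langle e^{2\pi i/m}\rangle$ (every unit quaternion is conjugate to a complex number with the same real part), not merely isomorphic to it, and you must carry out the mechanical check that the explicit generators in (iii)--(v) realise the three binary polyhedral preimages, which you correctly flag.
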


\begin{proof}
The list of conjugacy classes of finite subgroups is given in \cite{C80} as the above,
where $\cD_{m}$, has the index range $m\ge1$. It is clear that
$$ \cC_4=\langle i \rangle, \qquad \cD_1=\langle k\rangle, $$ 
are conjugate, and hence isomorphic. 
The $\cC_m$ and $\cD_m$ on the above list are abelian and nonabelian, respectively,
	and so are not isomorphic, nor are they isomorphic to the nonabelian groups 
	$\cT$, $\cO$, $\cI$
	(which contain $\cD_2$),
	i.e., $\cT\not\cong \cD_{6}$, $\cO\not\cong\cD_{12}$, $\cI\not\cong\cD_{30}$.
\end{proof}

%The notation $\cD_{2m}$ is also commonly used for the binary dihedral group of order $4m$ 
%(see Theorem 5.12 of \cite{LT09}).

From the above list, it follows that the stabiliser group $\HH_{G,v}^*$ for any one of the 
six equiangular lines is the binary icosahedral group $\cI\cong 2\cdot A_5$ 
(or a conjugate of it).

Let $H$ be the stabiliser group of the equiangular line given by the vector $w$ of (\ref{wdefn}), i.e.,
$$ H=(H_{720})_w = \langle b_3,g_2 \rangle, $$
where the generators $b_3$ and $g_2$ are given by (\ref{Blichaform}) and (\ref{g2def}).
We have observed that $W=\spam_\HH\{w\}$ is an irreducible $H$-submodule of $\HH^2$ on
which the action of $H$ is faithful. Since $H$ is unitary, it follows that the
orthogonal complement $W^\perp$ of $W$ is an irreducible $H$-submodule of $\HH^2$, 
and so its orbit gives a set of six lines. Let
\begin{equation}
\label{wperpdefn}
w^\perp := 
\pmat{ -\sqrt{3}-i+j+\sqrt{3}k \cr \sqrt{2}+\sqrt{10} } \in W^\perp.
\end{equation}
With the ordering: %$w$, $b_1 w$, $b_1^2 w$, $b_2 w$, $b_1b_2 w$, $b_1^2 b_2 w$
$w^\perp$, $b_1 w^\perp$, $b_1^2 w^\perp$, $b_2 w^\perp$, 
$b_1b_2 w^\perp$, $b_1^2 b_2 w^\perp$,
we have a second set of equiangular lines
\begin{align}
\label{wperpnicelines}
& \pmat{-\sqrt{3}-i+j+\sqrt{3}k \cr \sqrt{2}+\sqrt{10}}, \
\pmat{2i-2j \cr \sqrt{2}+\sqrt{10}}, \
\pmat{\sqrt{3}-i+j-\sqrt{3}k \cr \sqrt{2}+\sqrt{10}}, \cr
& \pmat{\sqrt{2}+\sqrt{10}\cr \sqrt{3}-i-j-\sqrt{3}k}, \
 \pmat{\sqrt{2}+\sqrt{10}\cr 2i+2j}, \
 \pmat{\sqrt{2}+\sqrt{10}\cr -\sqrt{3}-i-j+\sqrt{3}k},
\end{align}
which are an orbit of $H_{720}$. To understand the action of $H_{720}$ on the
line $W^\perp=\spam_\HH\{w^\perp\}$, we calculate the matrix representation for the 
basis $B=[w,w^\perp]$, i.e., $[g]_B=B^{-1} g B$, of the
generators $b_3$ (order $4$) and $g_2$ (order $5$) for $H$. They are
\begin{equation}
\label{b3g3diag}
[b_3]_B=\pmat{-{\sqrt{3}\over2}i+k & 0 \cr 0 & k}, \quad
[g_2]_B = \pmat{{\tau^{-1}\over2} + {\tau\over\sqrt{3}}i + {\tau^{-1}\over2\sqrt{3}}j & 0 \cr
0 & -{\tau\over2}+{\tau\over2\sqrt{3}}i-{\tau^{-1}\over2\sqrt{3}}j+{\tau^{-1}\over2}k},
\end{equation}
where $\tau={1\over2}(1+\sqrt{5})$. It is easily verified that
$$ \ga_{b_3,w^\perp} = -{\sqrt{3}\over2}i+k, \qquad 
\ga_{g_2,w^\perp} = {\tau^{-1}\over2} + {\tau\over\sqrt{3}}i + {\tau^{-1}\over2\sqrt{3}}j, $$
generate (a conjugate) of the binary icosahedral group $\cI$ (of Lemma \ref{Stringhamlemma}),
and so the action of $H$ on $W^\perp$ is faithful. 
However, $W$ and $W^\perp$ are not isomorphic $H$-submodules of $\HH^2=W\oplus W^\perp$,
since otherwise $\HH^2$ would be a homogeneous (isotypic) component for that
$H$-module, and hence the $H$-orbit of every vector in $\HH^2$ would be $1$-dimensional,
which is not the case.
In the interest of more general calculations, we show how this follows from 
character theory. 

Every quaternionic representation of a finite group $G$
as matrices in $M_d(\HH)$, such as $H$, corresponds to a complex representation
as matrices in $M_{2d}(\CC)$ via (\ref{symplecticform}).
The irreducible complex representations $\rho:G\to M_{2d}(\CC)$ that correspond to 
quaternionic representations are determined by the Frobenius–Schur indicator $\iota\chi$
of their character $\chi$, i.e.,
$$ \iota\chi:= {1\over|G|}\sum_{g\in g} \chi(g^2)\in\{-1,0,1\}, \qquad
\chi(g)=\trace(\rho(g)),$$
taking the value $\iota\chi=-1$ (see \cite{SS95}, \cite{G11}). For $H$ (as an abstract group) there are
two characters corresponding to quaternionic representations of rank $1$, i.e.,
$$ \mat{ 
\multicolumn{10}{c}{\hbox{\bf The rank $2$ characters of $H\cong 2\cdot A_5$}} \\[0.2cm]
%&&\cr
\hbox{class size} & 1 & 1 & 20 & 30 & 12 & 12 & 20 & 12 & 12 \cr
\hbox{class order}& 1 & 2 &  3 &  4 &  5 &  5 &  6 & 10 & 10 \cr
\chi_1 & 2 & -2 & -1 & 0 & \tau^{-1} & -\tau  & 1 & \tau & -\tau^{-1} \cr
\chi_2 & 2 & -2 & -1 & 0 & -\tau & \tau^{-1} & 1 & -\tau^{-1} & \tau 
}
$$
There are also characters corresponding to irreducible quaternionic representations of
rank $2$ and rank $3$.
In view of (\ref{symplecticform}), 
the values of the character $\chi$ of the complexification of a % an irreducible
quaternionic representation $G$ are given by
$$ \chi(g) = \trace([g]_\CC)=\trace(A)+\trace(\overline{A})=2\Re(\trace(g)). $$
Hence, by (\ref{b3g3diag}), the values of the characters of the representations 
of $H$ on $W$ and $W^\perp$ for the element $g_2$ are
$$ 2\Re(\trace(g_2|_W)
= 2\Re\Bigl({\tau^{-1}\over2} + {\tau\over\sqrt{3}}i + {\tau^{-1}\over2\sqrt{3}}j\Bigr)
= \tau^{-1} \qquad
2\Re(\trace(g_2|_{W^\perp})
%=2\Re(-{\tau\over2}+{\tau\over2\sqrt{3}}i-{\tau^{-1}\over2\sqrt{3}}j+{\tau^{-1}\over2}k)
= -\tau, $$
and so these representations are different.
We now summarise our calculations.

\begin{theorem} 
\label{twosetsequilines}
Let $G=H_{720}=\langle b_1,b_2,b_3,b_4\rangle\cong 2\cdot A_6$ 
be the primitive quaternionic reflection group of order $720$ 
given by (\ref{Blichaform}), 
with reducible subgroup $H=\langle b_3,g_2\rangle\cong 2\cdot A_5$
of order $120$, where $g_2$ is given by (\ref{g2def}),
and $w$ and $w^\perp$ be the orthogonal vectors 
%of (\ref{wdefn}) and (\ref{wperpdefn}), i.e.,
\begin{equation}
\label{wandwperpdefn}
w = \pmat{\sqrt{2}+\sqrt{10}\cr \sqrt{3}-i+j+\sqrt{3}k}, \qquad
w^\perp = \pmat{ -\sqrt{3}-i+j+\sqrt{3}k \cr \sqrt{2}+\sqrt{10} }.
\end{equation}
Then the $G$-orbits of $w$ and $w^\perp$ each consist of $720$ distinct vectors 
which lie in set of six equiangular lines ($120$ vectors in each line), and 
$H$ fixes the  lines through $w$ and $w^\perp$, 
on which it therefore has a faithful irreducible action. Further, 
we have the orthogonal decomposition
$$ \HH^2 = \spam_\HH\{w\} \oplus \spam_\HH\{w^\perp\}, $$
of $\HH^2$ into non-isomorphic irreducible $H$-submodules,  i.e., 
%which are therefore 
	the homogeneous (isotypic) components.
% $\CC H$-modules
\end{theorem}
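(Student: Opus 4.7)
The plan is to assemble the pieces already established in the preceding discussion into a single statement, and the main work is orbit-counting and a character computation; the genuinely new content is minimal since the technical ingredients have all appeared earlier.

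First, I would establish the orbit structure for $w$. By construction $H = (H_{720})_w$ is the full projective stabiliser of $w$ in $G$, and it has order $120$ and hence index $6$ in $G$, so the $G$-orbit of the line $\spam_\HH\{w\}$ consists of exactly six lines (they are listed as (\ref{nicelines})). To pass from lines to vectors I would consider the kernel of the homomorphism $H \to \HH_{G,w}^* : g \mapsto \ga_{g,w}$ whose image is (by the discussion surrounding Lemma \ref{Stringhamlemma}) the binary icosahedral group $\cI$ of order $120$. Since $|H| = |\cI| = 120$, the homomorphism is an isomorphism and the vector-stabiliser $\{g \in H : gw = w\}$ is trivial; thus the $G$-orbit of $w$ contains $720$ distinct vectors, with exactly $120$ lying on each of the six lines.

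Next, I would repeat the argument for $w^\perp$. Because $H \subset U_2(\HH)$ is unitary and fixes $W = \spam_\HH\{w\}$ as a line, it also fixes the orthogonal complement $W^\perp = \spam_\HH\{w^\perp\}$ as a line, so $H \subseteq (H_{720})_{w^\perp}$; equality follows because the $G$-orbit of the line $W^\perp$ already produces the six equiangular lines displayed in (\ref{wperpnicelines}), forcing the stabiliser to have index $6$ and hence order $120$. The scalars $\ga_{b_3,w^\perp}$ and $\ga_{g_2,w^\perp}$ read off from the diagonal matrices $[b_3]_B$ and $[g_2]_B$ in (\ref{b3g3diag}) can be checked to generate a conjugate of $\cI$ inside $\HH^*$ (this uses only Lemma \ref{Stringhamlemma} and the observation that the two generators have the correct orders $4$ and $5$ and satisfy the icosahedral relations). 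The same orbit-stabiliser count as above then yields $720$ distinct vectors, $120$ per line.

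Finally, I would verify that $W$ and $W^\perp$ are non-isomorphic as $\HH H$-modules by a character calculation on a single well-chosen element. Using $\chi(g) = 2\Re(\trace(g))$ for the character of the complexification, and reading the traces of $g_2|_W$ and $g_2|_{W^\perp}$ off the diagonal of $[g_2]_B$ in (\ref{b3g3diag}), one obtains $\tau^{-1}$ and $-\tau$ respectively. These are distinct real numbers, so the two rank-$1$ characters of $H \cong 2 \cdot A_5$ realised on $W$ and on $W^\perp$ are the two inequivalent characters $\chi_1,\chi_2$ tabulated just before the theorem; in particular $W \not\cong W^\perp$. Consequently the orthogonal decomposition $\HH^2 = W \oplus W^\perp$ is already the decomposition into isotypic components.

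The main obstacle is not conceptual but bookkeeping: one must be careful that the two stabilisers are literally equal (rather than merely containing $H$), and that the map $g \mapsto \ga_{g,v}$ on a noncommutative module genuinely lands in a subgroup of $\HH^*$ of the expected order, which is where Lemma \ref{Stringhamlemma} and the explicit diagonal form in (\ref{b3g3diag}) do the real work.
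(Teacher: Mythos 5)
Your proposal is correct and follows essentially the same route as the paper: orbit–stabiliser counting via the homomorphism $g\mapsto\ga_{g,w}$ onto a finite subgroup of $\HH^*$ identified through Lemma \ref{Stringhamlemma}, the transfer to $w^\perp$ using unitarity of $H$, and the character computation $\chi(g_2|_W)=\tau^{-1}\ne-\tau=\chi(g_2|_{W^\perp})$ to distinguish the two $H$-submodules. The only cosmetic difference is that the paper pins down the order of $\HH^*_{G,w}$ by noting that $2\cdot A_5$ has normal subgroups only of orders $1$, $2$ and $120$ and that $\ga_{-I}=-1$ rules out a nontrivial kernel, whereas you identify the image as $\cI$ first and deduce injectivity from the equality of orders — the same content in a slightly different order.
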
 

The fact that the orthogonal complement of equiangular lines
in $\HH^2$ gives another set of equiangular lines is an example of a more 
general phenomenon for $\HH^2$ ($d=2$).
We will refer to $|\inpro{v,w}|^2$ as the {\bf angle} between vectors $v,w\in\Hd$.

\begin{proposition}
For $v\in\HH^2$, let $v^\perp$ be any vector orthogonal to $v$, 
	with $\norm{v^\perp}=\norm{v}$, e.g.,
$$ v=\pmat{a\cr b}, \qquad 
v^\perp=\pmat{-\overline{a}^{-1}\overline{b}\overline{a}\cr \overline{a}}, 
\quad a\ne0. $$
Then ${}^\perp$ preserves the angles between lines, i.e.,
	$$ |\inpro{v,w}|^2=|\inpro{v^\perp,w^\perp}|^2, \qquad v,w\in\HH^2. $$
\end{proposition}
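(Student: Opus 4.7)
The plan is to reduce to an orthonormal basis. First I would observe that the quantity $|\inpro{v,w}|^2$ is unchanged under right multiplication of either argument by a unit scalar (since $\inpro{v\mu,w\nu}=\overline{\mu}\inpro{v,w}\nu$ and unit scalars have modulus $1$), and it scales as $\norm{v}^2\norm{w}^2$ under positive real scaling. Because $v^\perp$ is determined only up to a right unit scalar (the orthogonal complement of a line in $\HH^2$ being a single quaternionic line), neither side of the claimed identity depends on the particular choice of $v^\perp$ or $w^\perp$, and both sides scale the same way under rescaling $v$ and $w$. So we may assume $v, w$ are unit vectors and $\{v, v^\perp\}$ is an orthonormal basis of $\HH^2$.

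Next I would expand $w$ and $w^\perp$ in this basis. Write
$$ w=v\alpha+v^\perp\beta, \qquad w^\perp=v\mu+v^\perp\nu, \qquad \alpha,\beta,\mu,\nu\in\HH. $$
Reading off inner products using $\inpro{v,v}=\inpro{v^\perp,v^\perp}=1$ and $\inpro{v,v^\perp}=0$ gives immediately $\inpro{v,w}=\alpha$ and $\inpro{v^\perp,w^\perp}=\nu$, so the claim reduces to the scalar identity $|\alpha|^2=|\nu|^2$. The three remaining hypotheses on $\alpha,\beta,\mu,\nu$ are $\norm{w}^2=|\alpha|^2+|\beta|^2=1$, $\norm{w^\perp}^2=|\mu|^2+|\nu|^2=1$, and $\inpro{w,w^\perp}=\overline{\alpha}\mu+\overline{\beta}\nu=0$.

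The heart of the argument is then to derive $|\nu|^2=|\alpha|^2$ from these three relations. Assuming $\alpha\ne 0$, the orthogonality gives $\mu=-\overline{\alpha}^{-1}\overline{\beta}\nu$, and the calculation
$$ |\mu|^2=\mu\overline{\mu}=\overline{\alpha}^{-1}\overline{\beta}\,\nu\overline{\nu}\,\beta\alpha^{-1}
=|\nu|^2\,\overline{\alpha}^{-1}|\beta|^2\alpha^{-1}=|\beta|^2|\nu|^2|\alpha|^{-2} $$
combined with $|\mu|^2+|\nu|^2=1$ and $|\alpha|^2+|\beta|^2=1$ yields $|\nu|^2=|\alpha|^2$ as desired. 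The case $\alpha=0$ is trivial: then $\inpro{v,w}=0$ and the orthogonality $\overline{\beta}\nu=0$ forces $\nu=0$ (since $|\beta|^2=1$), so both sides vanish.

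The main delicate point will be the middle computation of $|\mu|^2$: although $\HH$ is noncommutative, the product $\overline{\alpha}^{-1}\alpha^{-1}$ equals the central real scalar $(\alpha\overline{\alpha})^{-1}=|\alpha|^{-2}$, and all other factors of $|\beta|^2$ and $|\nu|^2$ are real, so the quaternionic scalars collapse to the expected real ratio. Once this noncommutative bookkeeping is handled correctly, the rest is algebraic routine.
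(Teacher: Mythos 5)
Your proof is correct, but it takes a genuinely different route from the paper's. The paper proves the proposition by brute-force coordinate computation: it writes $v=(a_1,b_1)$, $w=(a_2,b_2)$, plugs in the explicit formula $v^\perp=(-\overline{a}^{-1}\overline{b}\overline{a},\overline{a})$, expands $|\inpro{v^\perp,w^\perp}|^2$ using $|a+b|^2=|a|^2+|b|^2+2\Re(a\overline{b})$, and matches terms against $|\inpro{v,w}|^2$ using the cyclic invariance $\Re(ab)=\Re(ba)$ and $|a|^2a^{-1}=\overline{a}$. You instead observe that both sides are invariant under right unit scalars and scale identically under positive real dilation, reduce to the case where $\{v,v^\perp\}$ and $\{w,w^\perp\}$ are orthonormal pairs, and expand $w,w^\perp$ in the basis $\{v,v^\perp\}$; the claim then becomes the statement that in the resulting $2\times 2$ quaternionic ``change of basis'' matrix $\pmat{\alpha&\mu\cr\beta&\nu}$, orthonormality of the columns forces $|\alpha|^2=|\nu|^2$ --- i.e., that the rows of a $2\times 2$ quaternionic unitary matrix are also unit vectors. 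Your noncommutative bookkeeping in the key step ($\overline{\alpha}^{-1}|\beta|^2|\nu|^2\alpha^{-1}=|\beta|^2|\nu|^2|\alpha|^{-2}$ because real scalars are central and $\overline{\alpha}^{-1}\alpha^{-1}=(\alpha\overline{\alpha})^{-1}$) is correct, as is the degenerate case $\alpha=0$. What each approach buys: the paper's computation is fully self-contained and simultaneously certifies that its displayed formula for $v^\perp$ does the job, while yours is basis-free, handles the ``any $v^\perp$'' clause of the statement explicitly via the unit-scalar invariance, and isolates the conceptual content (a column-orthonormal square quaternionic matrix is row-orthonormal), which makes the restriction to $d=2$ transparent.
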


\begin{proof}
This is by direct computation. Suppose that $v=(a_1,b_1)$, $w=(a_2,b_2)$, with $a_1,a_2\ne0$
(the other cases being trivial). Then
$$ \inpro{v,w}=\overline{a_1}a_2+\overline{b_1}b_2, \qquad
\inpro{v^\perp,w^\perp} =
a_1 b_1 a_1^{-1} \overline{a_2}^{-1}\overline{b_2}\overline{a_2}
+a_1 \overline{a_2}. $$
Using the identities 
%$\Re(ab)=\Re(ba)$ and $|a|^2a^{-1}=\overline{a}$, $a\ne0$, 
	$$ |a+b|^2=|a|^2+|b|^2+2\Re(a\overline{b}), \qquad \Re(ab)=\Re(ba), \qquad
	|a|^2a^{-1}=\overline{a}, \quad a\ne 0, $$
for $a,b\in\HH$, we calculate
\begin{align*}
|\inpro{v^\perp,w^\perp}|^2 &= |b_1|^2|b_2|^2+|a_1|^2|a_2|^2
+2\Re( a_1 b_1 a_1^{-1} \overline{a_2}^{-1}\overline{b_2}\overline{a_2}\cdot
a_2\overline{a_1}) \cr
& = |a_1|^2|a_2|^2+|b_1|^2|b_2|^2+2\Re(\overline{a_1} a_2\cdot \overline{b_2}b_1) \cr
	&=|\inpro{v,w}|^2,
\end{align*}
as claimed.
\end{proof}

\section{A general construction of interesting lines}

Our construction of the six equiangular lines involved the basic idea of 
taking the orbit of a vector/line which is fixed by some (ideally large) subgroup,
thereby giving
$$\hbox{ ``a small orbit with high symmetry''.} $$
%With appropriate definitions,
In this way, one can obtain a finite class of ``lines with
high symmetry'', by an appropriate choice of definitions. For example:

\begin{itemize}{\it 
\item Let $G$ be a finite group with an irreducible action on $\Hd$.
\item Choose a maximal reducible subgroup $H\subset G$ (there are finitely many) 
	for which
$\Hd$ has at least one irreducible one-dimensional $H$-submodule $L=\spam_\HH\{v\}$
        of multiplicity $1$, and consider the $n=|G|/|H|$ lines $\{gL\}_{gH\in G/H}$.
}\end{itemize}
There are {\em finitely many} sets of ``highly symmetric lines'' which can be obtained in 
this way, from any given finite abstract group $G$. 
In our case $d=2$, and so every
reducible subgroup $H$ gives and a pair of orthogonal 
irreducible one-dimensional $H$-submodules.

If the action of $H$ on $L$ is trivial, i.e., $v$ is fixed, then one 
obtains what was called a ``highly symmetric tight frame''
in \cite{BW13} (these sets of $n$ vectors $\{gv\}_{gH\in G/H}$
were given for $\Cd$). When $G$ is a real, complex or quaternionic reflection group,
the corresponding subgroups $H$ giving a highly symmetric tight frame 
are said to be ``parabolic subgroups'', and these are known to be reflection
groups (\cite{S64}, \cite{BST23} for the quaternionic case).

If the action of $H$ on $L$ is not trivial, then one obtains the ``highly symmetric lines''
of \cite{G22} (given for $\Cd$, but also for $2$-dimensional subspaces, so 
this includes $\Hd$).

\begin{example} The irreducible reflection groups 
$H_{24}\subset H_{120}\subset H_{720}\subset H_{1440}$ contain reflections of order $3$,
with $H_{1440}$ also containing (and being generated by) reflections of order $2$.
A subgroup $H$ of one of these reflection groups has the 
trivial (identity) action on a $1$-dimensional subspace $L$ of $\HH^2$ 
if and only if each element of $H$ acts on $\HH^2$ as a reflection with root $L$.
Hence $H$ must be $H_3$ (up to conjugation), or a subgroup generated by 
	a reflection of order $2$ in $H_{1440}\setminus H_{720}$. 
These subgroups generated by a single reflection are therefore
the minimal, maximal and hence only nontrivial parabolic subgroups 
(see \cite{BST23} Lemma 4.2).
\end{example}

For $G=H_{720}$, there are $17$ reducible subgroups $H$ of order $>3$
        (up to conjugation), which therefore
have a nontrivial action on some line $L$ (and its orthogonal complement).
%In particular, there are three which are maximal. 
Three of these are maximal. 

\begin{example}
\label{maximal720}
The three maximal reducible subgroups of $G=H_{720}$ are
(as abstract groups) {\tt <120, 5>},  {\tt <48, 28>}, {\tt <36, 7>}, 
which correspond to sets of
$6$, $15$, $20$ lines, with angles $\{{2\over5}\}$, $\{{1\over4},{5\over8}\}$,
	$\{0,{1\over3},{2\over3}\}$, respectively. Generating (fiducial) vectors
for these sets of lines are
$$ \pmat{\sqrt{2}+\sqrt{10}\cr 2i-2j}, \qquad
\pmat{1\cr j}, \qquad
\pmat{1\cr0}. $$
The first is an equiangular vector from (\ref{nicelines}), and
	the third is a root vector (for $b_1$).
\end{example}

%As abstract groups, they
%are {\tt <120, 5>},  {\tt <48, 28>}, {\tt <36, 7>}, which correspond to sets of
%	$6$, $15$, $20$ lines, with angles $\{{2\over5}\}$, $\{{1\over4},{5\over8}\}$, 
%	$\{0,{1\over3},{2\over3}\}$, respectively.

\noindent
These lines give optimal spherical designs, as we now explain.

For a fixed $t=1,2,\ldots$, a finite set unit vectors $\{v_1,\ldots,v_n\}$ in $\Hd$, 
equivalently lines, is said to be
a {\bf spherical $t$-design} (or {\bf spherical $(t,t)$-design}) if there is equality in
\begin{equation}
\label{Varchar}
\sum_{j=1}^n\sum_{k=1}^n |\inpro{v_j,v_k}|^{2t} \ge c_t(\Hd) \Bigl(
\sum_{\ell=1}^n \norm{v_\ell}^{2t}\Bigr)^2,
\qquad 
c_t(\Hd)
%=\prod_{j=0}^{t-1}{4+2j\over 4d+2j}
:= {2\cdot 3\cdots (t+1)\over 2d (2d+1)\cdots (2d+t-1)},
%= (t+1)t! {(2d-1)!\over(2d+t-1)!} = {t+1\over{2d+t-1\choose t}}.
\end{equation}
i.e.,
\begin{equation}
\label{t-designdef}
{1\over n^2} \sum_{j=1}^n\sum_{k=1}^n |\inpro{v_j,v_k}|^{2t} = c_t(\Hd)
%=\prod_{j=0}^{t-1}{4+2j\over 4d+2j}
= {2\cdot 3\cdots (t+1)\over 2d (2d+1)\cdots (2d+t-1)}.
%= (t+1)t! {(2d-1)!\over(2d+t-1)!} = {t+1\over{2d+t-1\choose t}}.
\end{equation}

The term ``spherical design'' comes from the fact that these can be viewed 
as cubature rules for the quaternionic sphere, as in the original presentation, 
which involved harmonic polynomials, see, e.g., \cite{H84}. It is quite technical 
(see \cite{W20c}) to show that this definition is equivalent to the 
``variational characterisation'' (\ref{t-designdef}), which is easier
to verify, and can be used to find numerical constructions.
A spherical $(1,1)$-design is a ``tight frame'' \cite{W20}, 
which is equivalent to the orthogonal-type expansion
\begin{equation} 
f=\sum_j v_j\inpro{v_j,f}, \qquad\forall f\in\Hd.
\end{equation}

Hoggar \cite{H79}, \cite{H82} 
gave (special) upper bounds on the number of unit vectors in $\Hd$ 
with a prescribed (small) angle set $A$, and (absolute) upper bounds which 
are independent of the angles $A$. 
For spherical $t$-designs for $\Rd$ and $\Cd$, there are lower bounds,
depending on $t$, which when they are (rarely) attained gives the
class of so called ``tight $t$-designs'' \cite{BMV04}, \cite{RS14}.
The corresponding theory for ``tight quaternionic spherical $t$-designs'' has yet to be 
developed (see \cite{CKM16}).

\begin{align*}
\multicolumn{5}{l}{\hbox{\bf The special bounds % of \cite{H79} 
	and absolute bounds  %of \cite{H82} o
	for designs in $\Hd$}} \\[0.3cm]
	\hbox{angles $A$} & \qquad \hbox{special bound $\nu(A)$} && \qquad
	\hbox{absolute bound} \\[0.2cm]
	\{\ga\} & \qquad {d(1-\ga)\over 1-d\ga} &&\qquad d(2d-1) \\[0.2cm]
\{\ga,\gb\} & \qquad  {d(2d+1)(1-\ga)(1-\gb)\over3-(2d+1)(\ga+\gb)+d(2d+1)\ga\gb}
	%, \quad \ga+\gb\le {3\over d+1} 
	&& \qquad {1\over3} d^2(4d^2-1) \\[0.2cm]
	\{0,\ga\} & \qquad {d(2d+1)(1-\ga)\over3-(2d+1)\ga} && \qquad {1\over3}d(4d^2-1)  \\[0.2cm]
        \{0,\ga,\gb\} & \qquad {d(d+1)(2d+1)(1-\ga)(1-\gb)\over6-3(d+1)(\ga+\gb)+(d+1)(2d+1)\ga\gb}   %, \quad \ga+\gb\le {8\over 2d+3}
	&& \qquad {1\over6}d^2 (d+1) (4d^2-1) \\[0.2cm]
\multicolumn{5}{l}{\hbox{\footnotesize
The $\nu(A)$ involving $\gb$ are subject to the restrictions
$\ga+\gb\le {3\over d+1}$ and $\ga+\gb\le {8\over 2d+3}$, respectively.
	}}
\end{align*}

%\noindent
%The $\nu(A)$ involving $\gb$ are subject to the restrictions
%$\ga+\gb\le {3\over d+1}$ and $\ga+\gb\le {8\over 2d+3}$, respectively.

The six equiangular lines $\HH^2$ at angle ${2\over5}$ satisfy both the 
special and absolute bounds. % (which are equal). 
Before summarising the results of our calculations, we give some further details.

It is easy to determine whether the $n$ lines given by an orbit of $v$ 
are a spherical $t$-design by using (\ref{t-designdef}). Indeed, since
$$ \inpro{gv,hv}=\inpro{v,g^*hv}=\inpro{v,g^{-1}hv}, $$
if $(v_j)$ is a set of unit vectors giving the lines, then (\ref{t-designdef}) becomes
\begin{equation}
\label{t-designorb}
\sum_{j=1}^n |\inpro{v_j,w}|^{2t} = c_t(\Hd)\cdot n, 
\end{equation}
where $w$ is any vector in any line. It follows from this 
(with the multiplicities indicated) that the $6$ and $15$ lines 
with angles $\{{2\over5}\}$ and $\{{1\over4},{5\over8}\}$ are spherical 
$(2,2)$-designs, i.e.,
$$  1+5\cdot \Bigl({2\over5}\Bigr)^2 ={3\over 10}\cdot  6, \qquad
1+6\cdot\Bigl({1\over4}\Bigr)^2+8\cdot\Bigl({5\over8}\Bigr)^2
= {3\over10}\cdot 15. $$
Moreover, the $20$ lines with angles $\{0,{1\over3},{2\over3}\}$ %,
%which are the root lines of $H_{720}$, 
give 
a spherical $(3,3)$-design, since
$$ 1+1\cdot0+9\cdot\Bigl({1\over3}\Bigr)^3+9\cdot\Bigl({2\over3}\Bigr)^3
= {1\over5}\cdot 20. $$

It was shown in \cite{MW19} that higher order real and complex spherical designs could be
obtained by taking a union of orbits. 
This concept extends to 
quaternionic designs:

\begin{example} 
\label{MUBequiangular}
(Mutually unbiased equiangular lines)
Take the union of the two sets of six equiangular lines in $\HH^2$ at angle 
${2\over5}$ given in Theorem \ref{twosetsequilines}, 
i.e., the orbit of the orthogonal vectors $w$ and $w^\perp$.
It is easily verified that the angle between any vector in one set of lines and
and any of the five from the other set which are not orthogonal to it is ${3\over5}$. 
It follows from  (\ref{t-designorb})
that these $12$ vectors with angles $\{0,{2\over3},{3\over5}\}$
give a spherical $(3,3)$-design for $\HH^2$, by the calculation
$$  1+1\cdot 0+5\cdot\Bigl({2\over5}\Bigr)^3 + 5\cdot\Bigl({3\over5}\Bigr)^3
%={144\over 5} 
= {1\over 5} \cdot 12. $$
\end{example}

The above union of two sets of equiangular lines can be viewed as an orbit of $H_{1440}$,
i.e., as highly symmetric lines for $H_{1440}$.

\begin{example}
\label{H1440example}
The reflection group $G=H_{1440}$ has five maximal reducible 
subgroups of orders $120,72,48,48,24$ corresponding to sets of 
$12,20,30,30,60$ highly symmetric lines.
Since the index of $H_{720}$ in $G$ is $2$, the $G$-orbit of a set of $n$ lines 
that are an $H_{720}$-orbit is either the same set of lines or a set of $2n$ lines.
In this way, for the $6,15,20$ lines of Example \ref{maximal720}, we obtain
sets of $12,30,20$ lines. 

%the first three of these sets of lines
%are the $G$-orbit of the corresponding set of half that number of lines given in
%Example \ref{maximal720}.

The $12$ lines are those of Example \ref{MUBequiangular}.
Indeed, if $r\in H_{1440}$ is the reflection of order $2$ given by
$$ r:={1\over\sqrt{2}}\pmat{0 & 1+k \cr 1-k & 0}, $$
then it maps the equiangular lines given by $w$ and $w^\perp$ of (\ref{wandwperpdefn}) to
each other, in particular 
$$ r w = w^\perp\ga, \qquad\ga= {1-k\over\sqrt{2}}. $$
Further, $H_{1440}$ maps the six line orthogonal line pairs $v\HH\cup v^\perp\HH$ 
(crosses) to
each other. This gives a permutation representation of $H_{1440}$, with kernel $\inpro{-I}$.
Hence $H_{1440}$ is $2\cdot S_6$, the double cover of $S_6$, 
and elements $\pm g$ in $H_{1440}$ can be indexed by permutations on the six equiangular lines
(with even permutations mapping a given set of equiangular lines to itself, and odd permutations
	mapping it to the set of orthogonal equiangular lines).
%\begin{align*} 
%r w &= {1\over\sqrt{2}} \pmat{(1+k)(\sqrt{3}-i+j+\sqrt{3}k) \cr
%(1-k) (\sqrt{2}+\sqrt{10})}
%= \pmat{(1+k)(\sqrt{3}-i+j+\sqrt{3}k){1+k\over2} \cr
%\sqrt{2}+\sqrt{10}} {1-k\over\sqrt{2}} \cr
%	& = \pmat{-\sqrt{3}-i+j+\sqrt{3}k \cr
%\sqrt{2}+\sqrt{10}} {1-k\over\sqrt{2}} = w^\perp {1-k\over\sqrt{2}}.
%\end{align*}

The $30$ lines obtained from the $15$ lines with angles $\{{1\over4},{5\over8}\}$,
which is a $(2,2)$-design, give a spherical $(3,3)$-design
with angles $\{0,{1\over4},{3\over8},{5\over8},{3\over4}\}$, via the calculation
\begin{equation}
\label{thirtylinesI}
1+1\cdot 0+6\cdot\Bigl({1\over4}\Bigr)^3 + 8\cdot\Bigl({3\over8}\Bigr)^3 
+ 8\cdot\Bigl({5\over8}\Bigr)^3 + 6\cdot\Bigl({3\over4}\Bigr)^3
= {1\over 5} \cdot 30.
\end{equation}
\end{example}

%The list of \cite{H82} gives one $t$-design in $\HH^2$ meeting the special bound
The list of \cite{H82} gives one $t$-design in $\HH^2$ meeting the special (and also
absolute) bound
%(and also the absolute bound), 
i.e., 
the $10$ vectors given by the five
mutually 
unbiased orthonormal bases
%bases 
\begin{equation}
\label{H2MUBs}
(1,0),\ (0,1),\ (1,\pm1),\ (1,\pm i),\ (1,\pm j),\ (1,\pm k), 
\end{equation}
with angles $\{0,{1\over2}\}$, which form a spherical $(3,3)$-design.
%, via
%$$ 1+1\cdot 0+8\cdot\Bigl({1\over2}\Bigr)^3 = {1\over 5} \cdot 10. $$
 
Our calculations have given three new spherical designs for $\HH^2$ that meet the special 
bound, in addition to the two others known.

\begin{align*}
%\multicolumn{5}{l}{\hbox{\bf The special bounds % of \cite{H79} and absolute bounds  %of \cite{H82} o for designs in $\Hd$}} \\[0.3cm]
\multicolumn{11}{l}{\hbox{\bf
The special and absolute bounds for spherical $t$-designs in $\HH^2$ }} \\[0.2cm]
	n & \qquad A &&\quad \nu(A) &&\quad \hbox{\footnotesize\vbox{\hbox{absolute}\hbox{bound}}}
	&&\ t &&\qquad  \\[0.2cm]
	6 & \qquad \hbox{$\{{2\over5}\}$} &&\quad  6 &&\quad 6 &&\ 2 &&\qquad \hbox{Equiangular lines \cite{ET20}} \\[0.2cm]
	10 & \qquad \hbox{$\{0,{1\over2}\}$} &&\quad 10 &&\quad 10 &&\ 3 &&\qquad \hbox{\cite{H82} (Example 3)} \\[0.2cm]
	12 & \qquad \hbox{$\{0,{2\over5},{3\over5}\}$} &&\quad 12 &&\quad 30 &&\ 3 &&\qquad \hbox{Example \ref{MUBequiangular}, Example \ref{H1440example}
 }\\[0.2cm]
15 & \qquad \hbox{$\{{1\over4},{5\over8}\}$} &&\quad 15 &&\quad 20 &&\ 2 &&\qquad \hbox{Example \ref{maximal720} }\\[0.2cm]
20 & \qquad \hbox{$\{0,{1\over3},{2\over3}\}$} &&\quad 20 &&\quad 30 &&\ 3 &&\qquad \hbox{Example \ref{maximal720} } 
%\multicolumn{11}{l}{\hbox{\footnotesize
%The $\nu(A)$ involving $\gb$ are subject to the restrictions
%$\ga+\gb\le {3\over d+1}$ and $\ga+\gb\le {8\over 2d+3}$, respectively.  }}
\end{align*}

\vskip0.5truecm

Interesting designs can also be found as highly symmetric lines for 
nonmaximal reducible subgroups of $G$, % which are not maximal, 
e.g., the six equiangular lines for $G=H_{1440}$. Here is another.

\begin{example} Let $G=H_{720}$. This has a nonmaximal reducible subgroup of order $24$ 
(a subgroup of the maximal reducible subgroups of orders $120$ and $48$),
which gives a system of $30$ lines with angles $\{0,{1\over4},{1\over2},{3\over4}\}$ 
generated by the fiducial vector
	$$ \pmat{\sqrt{2}i\cr 1+\sqrt{3}}. $$
This gives a spherical $(3,3)$-design, via
$$ 1+1\cdot 0 +8\cdot\Bigl({1\over4}\Bigr)^3 +12\cdot\Bigl({1\over2}\Bigr)^3 
+8\cdot\Bigl({3\over4}\Bigr)^3 = {1\over 5} \cdot 30. $$
This design is fixed by $H_{1440}$, and so, in view of (\ref{thirtylinesI}), 
it gives the second set of $30$ lines mentioned in Example \ref{H1440example}.
\end{example}

We now give details about how the calculations discussed were implemented.

\section{Computational details}

The calculation of all subgroups of a given group $G$ (up to conjugacy) is
a task easily done in Magma using {\tt Subgroups(G)}. The identification
of those subgroups that are reducible groups of quaternionic matrices, and hence give
systems of lines is a little more involved. Serre's condition for irreducibility
of finite groups of real or complex matrices \cite{S77} (Theorem 5, Chapter 2)
\begin{equation}
\label{Serrecdn}
\sum_{g\in G} \trace(g)\trace(g^{-1}) =|G|,
\end{equation}
cannot be applied, or generalised in a routine way. 
However, it was shown in \cite{W20}
that a finite group $G\subset U_d(\HH)$ is irreducible if and only if 
every orbit of a nonzero vector is a ``tight frame'', i.e., is a spherical 
$(1,1)$-design. From (\ref{t-designdef}), it follows that the orbit of a 
nonzero vector $x\in\Hd$ is spherical $(t,t)$-design if and only if
\begin{equation}
\label{pG(t)defn}
p_G^{(t)}(x) := {1\over|G|}\sum_{g\in G}|\inpro{x,gx}|^{2t}-c_t(\Hd) \inpro{x,x}^{2t} = 0.
\end{equation}
In particular, for $t=1$, we have the condition for being irreducible
\begin{equation}
\label{irreduciblecdn}
{1\over|G|}\sum_{g\in G}|\inpro{x,gx}|^{2}-{1\over d}\inpro{x,x}^{2} = 0.
\end{equation}
This is easily verified in Magma by using {\tt PolynomialRing} to set up an appropriate 
polynomial ring with the coordinates of $x$ as variables.
In this way, we calculated
$$ p_{H_{24}}^{(1)}=0, \qquad p_{H_{720}}^{(2)}=0, \qquad p_{H_{1440}}^{(3)}=0, $$
which gives
\begin{itemize}
\item Every $H_{24}$ orbit of a nonzero vector gives a spherical $(1,1)$-design.
\item Every $H_{720}$ orbit of a nonzero vector gives a spherical $(2,2)$-design.
\item Every $H_{1440}$ orbit of a nonzero vector gives a spherical $(3,3)$-design.
\end{itemize}

Given a reducible subgroup $G\subset U_d(\HH)$, our method requires the calculation of any 
$1$-dimensional $G$-invariant subspaces $x\HH$ that may exist, 
i.e., those nonzero $x\in\Hd$ for which 
\begin{equation}
\label{1dimGinvarsubspace}
gx=x\ga_g, \quad \exists\ga_g\in\HH, \qquad\forall g\in G. 
\end{equation}
The Cauchy-Schwarz inequality (and equality) extends to $\Hd$ (see \cite{W20}), so
that (\ref{1dimGinvarsubspace}) holds if and only if there is the equality
\begin{equation}
\label{CSequivcdn}
|\inpro{gx,x}|^2=\inpro{gx,gx}\inpro{x,x}=\inpro{x,x}^2,
\end{equation}
and hence the set of $x\in\Hd$ giving $1$-dimensional $G$-invariant subspaces 
is an algebraic variety.
%form an algebraic variety.
%a real algebraic variety.

\begin{lemma}
Let $G\subset U(\Hd)$ be reducible and $x\in\Hd$ be nonzero. 
Then the line $x\HH$ is % an $G$-invariant $1$-dimensional subspace 
fixed by $G$ if and only if
\begin{equation}
\label{fixedlineeqn}
|\inpro{gx,x}|^2 = \inpro{x,x}^2, \qquad \forall g\in\cG, 
\end{equation}
where $\cG$ is any generating set for $G$.
\end{lemma}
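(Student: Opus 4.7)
The plan is to observe that the forward implication is immediate from unitarity and then reduce the converse to verifying that ``fixing the line $x\HH$'' is a property stable under products and inverses in $G$. If every $g\in G$ satisfies $gx = x\ga_g$ for some $\ga_g\in\HH$, then unitarity forces $|\ga_g|=1$, so
$$ |\inpro{gx,x}|^2 = |\overline{\ga_g}\inpro{x,x}|^2 = \inpro{x,x}^2, $$
and this certainly restricts to any generating set $\cG$.

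For the converse, I would first invoke the quaternionic Cauchy--Schwarz equality condition already recorded in (\ref{CSequivcdn}): for nonzero $x$, the scalar identity $|\inpro{gx,x}|^2 = \inpro{x,x}^2$ is equivalent to $gx$ and $x$ being $\HH$-linearly dependent, i.e., to $gx\in x\HH$. Hence the hypothesis (\ref{fixedlineeqn}) translates precisely into $\cG\subset \Stab := \{g\in G : gx\in x\HH\}$, and the remaining task is to show that $\Stab$ is a subgroup of $G$; for then $\Stab\supseteq\langle\cG\rangle = G$, which says exactly that every $g\in G$ fixes $x\HH$, as required.

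Closure of $\Stab$ under products and inverses is a direct calculation. Taking $g,h\in\Stab$ with $gx=x\ga_g$ and $hx = x\ga_h$, and using the right-$\HH$-module convention $A(v\ga)=(Av)\ga$ adopted in the paper, I would compute
$$ (gh)x = g(x\ga_h) = (gx)\ga_h = x(\ga_g\ga_h)\in x\HH, $$
so $gh\in\Stab$; and since $\ga_g$ is a unit quaternion (hence nonzero), $g^{-1}x = x\ga_g^{-1}\in x\HH$, so $g^{-1}\in\Stab$. The identity is trivially in $\Stab$.

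The only point requiring any care --- the nominal ``main obstacle,'' although it amounts to bookkeeping --- is to keep the quaternionic scalars on the correct side throughout, so that the factorisation $\ga_{gh}=\ga_g\ga_h$ remains meaningful over the noncommutative algebra $\HH$. Incidentally, the reducibility hypothesis on $G$ is not actually used in the argument; it serves only to guarantee that $G$-fixed lines can exist at all in dimension $d\ge 2$.
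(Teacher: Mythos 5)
Your proof is correct and follows essentially the same route as the paper: both directions rest on the quaternionic Cauchy--Schwarz equality condition (\ref{CSequivcdn}) to identify the scalar identity with $gx\in x\HH$, and on the fact that invariance under a generating set propagates to the whole group (which you, reasonably, spell out by checking that the projective stabiliser is a subgroup). Your side remark that reducibility is never used is also accurate.
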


\begin{proof} Use the condition (\ref{CSequivcdn}), 
and the observation that $G$-invariance is equivalent to 
invariance under a generating set for $G$.
\end{proof}

We denote the real algebraic variety given by the set of solutions $x\in\Hd$ to the 
system of polynomial equations (\ref{fixedlineeqn}) by $\cV_1(G)$.
For us, the computation of $\cV_1(G)$ was not completely straightforward,
for the following reasons:

\begin{itemize}
\item The system of $|\cG|$ polynomial equations (\ref{fixedlineeqn}) in the $4d$ real variables given by the
$1,i,j,k$ coefficients of the coordinates of $x\in\Hd$ is easily formed in Magma. However,
in many cases, it could not be solved there. In these cases, the software system Maple 
was then used to find a numerical solutions, from which analytic ones could then be deduced.
\item For reducible subgroups $H_1\subset H_2$, we have that $\cV_1(H_2)\subset\cV_1(H_1)$.
Often, when finding an element of $\V_1(H_1)$ for a nonmaximal reducible subgroup $H_1$
of $G=H_{720}$, it turned out to be in the algebraic variety for a maximal subgroup
(Example \ref{maximal720}). Given that the whole variety was not being calculated, it was
hard to form a clear picture of whether this was some quirk of the calculation method
		or because $\cV_1(H_1)\setminus \cV_1(H_2)$ might be empty. 
		This is an ongoing investigation.
\end{itemize}

\section{Concluding remarks}

We have shown that the unique maximal set of six equiangular lines in $\HH^2$
is the orbit of a (quaternionic) reflection group. The same is also true for the maximal
sets of equiangular lines in $\RR^2$ and $\CC^2$. The three equiangular lines in $\RR^2$
(the Mercedes-Benz frame) are an orbit of the faithful irreducible action of $S_3$ on $\RR^2$,
and the SIC of four equiangular lines in $\CC^2$ is an orbit of the complex reflection groups 
with Shephard-Todd numbers $4,5,6,7$ (see \cite{BW13}, Table 1). 
Indeed, the first of these groups is $H_{24}$, and the $H_{24}$-orbit of the vector $e_1=(1,0)$
gives the following presentation of the SIC
$$ \pmat{1\cr 0}, \
\pmat{{1\over\sqrt{3}}i\cr-{\sqrt{2}\over\sqrt{3}}}, \
\pmat{{1\over\sqrt{3}}i\cr{1\over\sqrt{6}}-{1\over\sqrt{2}}i},\
\pmat{{1\over\sqrt{3}}i\cr{1\over\sqrt{6}}+{1\over\sqrt{2}}i}. $$
%Incidentally, the $H_{24}$-orbit of the vector $(\sqrt{2}i,1-\sqrt{3})$ gives six vector
%with angles $\{0,{1\over2}\}$, which form a spherical $(3,3)$-design.
This SIC is also the orbit of the discrete Heisenberg group \cite{W18}, which
(in this case  $d=2$) is an irreducible real reflection group of order $8$.

Moreover, the Hesse SIC of nine equiangular lines in $\CC^3$ %\cite{Z15} 
is an orbit of the
complex reflection groups with Shephard-Todd numbers $25,26$ (\cite{BW13}, Example 11,
Table 2).

The above examples notwithstanding, we do not expect that the maximal sets of 
quaternionic equiangular lines in $\Hd$ come as orbits of quaternionic reflection groups, 
in general, for the following reasons:

\begin{itemize}
\item By considering eigenspaces, any element of a finite group $G\subset U_2(\CC)$ 
can be multiplied by a unit scalar, to obtain a reflection of the same order.
In this way, the appearance of reflection groups 
for collineation groups acting on $\CC^2$ is incidental, 
rather than by design. Similar reasoning can be applied to collineation groups acting on $\HH^2$.
\item The general method for finding maximal sets of equiangular lines in $\Cd$ is as the orbit of a fiducial vector
under the action of the Heisenberg group (an irreducible projective representation of $\ZZ_d^2$). 
These can be viewed as highly symmetric lines for a larger Clifford group \cite{W18}. 
For $d\ge3$, the Heisenberg and Clifford groups are not given by reflection groups.
For $\Hd$, Cohen's classification \cite{C80} gives no infinite families of 
irreducible quaternionic reflection groups. 
Thus, to find infinite families of optimal quaternionic equiangular lines in this way, 
one would like an infinite family of irreducible quaternionic matrix groups, of which 
			none are currently known.
\end{itemize}
%\item Fundamental question  are there always more than $d^2$ quaternionic equiangular lines

We conclude with a couple of obvious directions for extending number of known maximal
sets of quaternionic lines:

\begin{itemize} 
\item Starting with a given quaternionic reflection group, find the associated
		sets of highly symmetric lines -- hoping for an equiangular set.
\item Use the variational characterisation of (\ref{Varchar}), or other methods, to find 
sets of quaternionic equiangular lines numerically (see \cite{CKM16}), and 
then deduce the irreducible projective quaternionic representation that they
might be given by.
\end{itemize}

\section{Acknowledgements}

Thanks to Gabriel Verret, Don Taylor and Allan Steel for some very useful discussions about
the implementation of quaternionic reflection groups in Magma.

\vfil\eject

\bibliographystyle{alpha}
\bibliography{refs-sixlines}
\nocite{*}

\end{document}

Initial calculations suggest the methods outlined here to find highly symmetric sets
of lines from reducible subgroups of irreducible reflection groups $G$, give similarly 
interesting sets of lines (calculations are ongoing). Moreover, the fact that that $G$
is a reflection group plays no role, other than providing an irreducible quaternionic
group (these have not been classified?)

The fact that the reflection groups have a classification, gives a corresponding
classification lines (by the reflection group, and reducible subgroup)

The two subgroups of order $120$ have the following characteristics:
Every orbit is a $(2,2)$-design, it is a primitive reflection group containing $20$ reflections,
or it contains no reflections and is reducible, i.e., some orbits are bases/tight frames.

\begin{example}
For the reflection subgroup of order $120$, every orbit is a $(2,2)$-design, and 
	in particular the orbit of a root, e.g.,  $(1,0)$ consists of $10$ lines
	with angles:  ${1\over3},{2\over3}$ (no orthogonality), which give a 
	a $(2,2)$-design meeting the special bound via:
	$$ 1+6\cdot \Bigl( {1\over3}\Bigr)^2 +3\cdot\Bigl({2\over3}\Bigl)^2 = 3 = {3\over 10} 10. $$
\end{example}

The $20$ lines given by the reflection roots make the angles $\{0,{1\over3},{2\over3}\}$.
These satisfy Hoggar's bound on the number of quaternionic equiangular lines with 
angles $\{0,\ga,\gb\}$ in $\Hd$, i.e., see Hoggar (Bounds for quaternionic line systems and 
reflection groups).
$$ n \le {d(d+1)(2d+1)(1-\ga)(1-\gb)\over 6-3(d+1)(\ga+\gb)+(d+1)(2d+1)\ga\gb}, $$

zzzzzzzzzzzzzzzz

The SIC and MUBs in $\CC^2$ appear as highly symmetric lines sets of $4$ and $6$ lines 
for the complex reflection subgroup of order $24$, and their orbit in the larger group gives
$20$ lines and $30$ lines

The absolute bound

$$ n \le \begin{cases}
{(2d)_s(2d-1)_{s-1}\over (2)_s (s-1)!}, & 0\in A; \cr
{(2d)_s(2d-1)_{s}\over (2)_s (s)!} ? , & 0\not\in A.
\end{cases}
$$
check

It is easily verified that the $6$ equiangular lines and the $20$ root lines for the
reflections are orbits under the action of $H$, i.e., applying an element of $H$ 
to a vector in one of the lines gives a vector in another line of the set.

The following three reflections generate $H$,
$$ (1, 2, 3)(4, 6, 5), \quad
    (1, 6, 2)(3, 5, 4), \quad
    (1, 3, 6)(2, 5, 4), $$
and any two generate $\langle 120, 5 \rangle$.

\subsection{The $P$ groups}

The generators for the nested $P$ groups are
$$ \pmat{j&0 \cr 0&-1}, \quad {1\over2}\pmat{1+k&1-k\cr 1-k&1+k}, \quad
\pmat{1&0 \cr 0&k}, \quad {1\over\sqrt{2}}\pmat{0&1+i\cr 1-i&0}. $$
Various Shephard-Todd reflection groups are subgroups of this.

The group $K$ is not given explicitly (I thought), but it is said to be 
a subgroup of the group of order $16\cdot 720\cdot \phi$  
generated by $S$ and $T$.

It seems to be the group of order $16\phi$ generated by
$$ A_1=\pmat{1&0&0&0 \cr 0&1&0&0 \cr 0&0&-1&0 \cr 0&0&0&-1 }, \quad
A_2=\pmat{1&0&0&0 \cr 0&-1&0&0 \cr 0&0&-1&0 \cr 0&0&0&1 }, \quad
A_3=\pmat{0&1&0&0 \cr 1&0&0&0 \cr 0&0&0&1 \cr 0&0&1&0 }, \quad
A_4=\pmat{0&0&1&0 \cr 0&0&0&1 \cr 1&0&0&0 \cr 0&1&0&0 }. $$
This group is irreducible. To this $T$ is added to obtain
a primitive group $13^\circ$, to this adding 
an additional generator
$$ \{\},\quad R^2,\quad R,\quad, SB, \quad BR, \quad A,\quad B,\quad
AB, \quad S, $$
gives the primitive groups $14^\circ,\ldots, 21^\circ$.
The following elements have traces which are not real
$$ 
\trace(BR)=2i, \quad
\trace(B)=\sqrt{2}(1+i), \quad 
\trace(S)=2\sqrt{2}i, $$
and so any group containing these elements
cannot be the complexification of a quaternionic group.
In particular, $17^\circ$, $19^\circ$ and $21^\circ$ are not
quaternionic. Further, $\trace(T^3AB)=-i$, so $20^\circ$ cannot 
be quaternionic.

The primitive groups $K$, $17^\circ$, $13^\circ,\ldots,16^\circ$ 
and $18^\circ$ are are possibly quaternionic.

The biggest of the reflection groups has reflection subgroups (most like
imprimitive) of orders
$$ 768,\ 320,\ 384,\ 64,\ 128,\ 256,\ 240,\ 16,\ 192,\ 32,\ 96,\ 4,\ 48,\
8,\ 
$$
these we calculated by finding the order of reflection subgroup of large
subgroups.
The smallest reflection free subgroup has order $120$ which is
$2\cdot A_5$.
This seems to be quite interesting as the next reflection free subgroup
has order $48$ (so cannot be a subgroup, and there is a reflection group 
of that order), then one of order $40$ (the unique group of that order).

For $T$ and $A$ to generate a group with centre $\langle-I\rangle$, for
which $T$ and $A$ are complexifications, we need to take $T$ and $iA$,
and for the generators for $K$ to be consistent take $A_1,iA_2,iA_3,A_4$.
The permutation can be $P=[e_4,e_2,e_3,e_1]$, giving $P^{-1}gP$ for
$g=A_1,iA_2,iA_3,A_4, ...$
$$ \pmat{-1&0\cr0&1},\quad \pmat{i&0\cr0&-i},\quad
\pmat{-k&0\cr0&-k}, \quad \pmat{0&1\cr1&0} 
\Implies \pmat{1&0\cr0&-1}, \quad
\pmat{0&1\cr1&0}, \quad \pmat{i&0\cr0&i}, \quad
	\pmat{j&0\cr0&j}, $$
The images of $T,R^2,SB,A$ are
$$ {1\over2}\pmat{j-k&1-i\cr-j-k&1+i}, \quad
\pmat{-1&0\cr 0&-k}, \quad
\pmat{-i&0\cr0&-1},\quad
{1\over\sqrt{2}}\pmat{1+i&0\cr0&-1+i}, $$
The group $K$ is the unique normal subgroup of its order in corresponding
quaternionic group.

The group generated by $K$ ($A_1,A_2,A_3,A_4$) and $T$ and $A$, has 
centre $\langle i\rangle$, so some generators must be multiplied
by $\pm i$ to obtain a group with centre $\langle -1\rangle$

\subsection{Extra stuff}

rrrrr

The condition for irreducibility, for any real, complex or quaternion, we write
$$ q=\sum_{1\le r\le m} q^r i_r, \qquad m=\dim_\RR(\FF), \quad q^r\in\RR, \quad
(i_1,i_2,i_3,i_4) = (1,i,j,k). $$
\begin{align*}
\inpro{x,gx}
&= \sum_a \overline{x_a}\Bigl(\sum_b g_{ab} x_b\Bigr) \cr
&= \sum_a \sum_b \Bigl( \sum_r x_a^r i_r^3\Bigr) 
	\Bigl(\sum_s g_{ab}^s i_s\Bigr)\Bigl( \sum_t  x_b^t i_t\Bigr) \cr
	&= \sum_{a,b\atop r,s,t}   i_r^3 i_s i_t g_{ab}^s ( x_a^r x_b^t) \cr
\end{align*}
$$ {\partial\over\partial{x_\ga^\gb}} ( x_a^r x_b^t)
= \gd_{\ga a}\gd_{\gb r} x_b^t + x_a^r \gd_{\ga b}\gd_{\gb t}, $$
$$ {\partial\over\partial{x_\ga^\gb}} \inpro{x,gx}
= \sum_{a,b\atop r,s,t}   i_r^3 i_s i_t g_{ab}^s (\gd_{\ga a}\gd_{\gb r} x_b^t)
+\sum_{a,b\atop r,s,t}   i_r^3 i_s i_t g_{ab}^s (x_a^r \gd_{\ga b}\gd_{\gb t}), $$
$$ = \sum_{b\atop s,t}   i_\gb^3 i_s i_t g_{\ga b}^s x_b^t
+\sum_{a\atop r,s}   i_r^3 i_s i_\gb g_{a\ga}^s x_a^r, $$
$$ = \sum_{b}   i_\gb^3 g_{\ga b} x_b +\sum_{a}    \overline{x_a} g_{a\ga} i_\gb , $$
$$ = i_\gb^3 (gx)_\ga 
+\sum_{a}    \overline{x_a} g_{a\ga} i_\gb
= i_\gb^3 (gx)_\ga
+\sum_{a}   \overline{ \overline{g_{a\ga}} x_a} i_\gb
, $$
$$  i_\gb^3 (gx)_\ga + \overline{(g^* x)_\ga} i_\gb
, $$

The ``regular quaternionic polytope'' \cite{C95}

What quaternionic representations come from $2\cdot A_n$ in general?

The stabiliser group of a line is not a reflection group, but it acts as a reflection
group on the line.

%We also observe that if $d\ge2$, then $G_v$ is reducible, and so is
%a proper subgroup of $G$ when $G$ is irreducible.

\begin{example}
%It is immediate from the definitions that if 
If $H$ is a subgroup of $G$, then
$H_v$ and $\HH_{H,v}^*$ are isomorphic subgroups of $G_v$ and $\HH_{G,v}^*$,
respectively. As a particular example, consider the subgroup of 
reflections in $G$ with root $v$, i.e., 
$$ R_{G,v}:=\{g\in G: g=r_{v,\ga},\exists\ga\}\subset G_v. $$
Then $R_{G,v}=(R_{G,v})_v$ is a normal subgroup of $G_v$, so that $\HH_{R_{G,v},v}^*$ 
is a normal subgroup of $\HH_{G,v}$, since
for $g\in G_v$, $r_{v,\gb}\in R_{G,v}$,
$$ (g^{-1}r_{v,\gb} g)v = g^{-1}r_{v,\gb} v\ga_g
= g^{-1} v\gb \ga_g
= v \ga_{g^{-1}}\gb\ga_g
\Implies g^{-1}r_{v,\gb} g = r_{v,\ga_{g^{-1}}\gb\ga_g} $$
$$ (g^{-1}r_{v,\gb} g)x 
= g^{-1}r_{v,\gb} x\ga_g
= g^{-1} x\ga_g
= x \ga_{g^{-1}}\ga_g
= x \ga_{1} = x, $$
\end{example}

\begin{example}
The pointwise stabiliser of a subset is also classically studied, e.g.,
	$$ \{g\in G: gv=v\} \subset G_v. $$
	For the above $(g-I)v=0$, implies $g=I$, or $g$ is a reflection with fixed 
	space $\spam_\HH\{v\}$, and hence (for $d=2$) with root line $v^\perp$. 
So the nontrivial stabiliser groups would be all reflections with a fixed root,
corresponding to the orthogonal complement of the root. In general, we would have
that the fixed set is an intersection of fixed spaces for reflections.
	In the $O_2$ example, the orthogonal complement of a root is another root (this
	is easily verified). Thus the parabolic subgroups, i.e., those stabilising a set of points,
	and hence the subspace they span, are the groups of order $3$ generated by a 
	reflection.
\end{example}

$G_v$ is reducible, and so must be a proper subgroup of $G$ if $G$ is irreducible.
if and only if $$ \sum_{g\in G}|\inpro{v,gv}|^2-{|G|\over d} \inpro{v,v}^2 = 0. $$

ssssssssssssssssssssssss

%\begin{equation}
%\label{Hdef}
%H=\langle U_a,U_b\rangle
%\end{equation}
%be this rank $2$ faithful irreducible unitary representation of $2\cdot A_6$ over $\HH$.

Reflections map any vector in the $1$-dimensional orthogonal complement of the
fixed subspace to a multiple of itself. 
It is instructive to consider the explicit formula for such a map $g$,
given that it acts on the left to give a right scalar multiple.
With $a$ a unit vector
in the orthogonal complement (the reflected vector) called a {\bf root}, 
and $\xi$ the unit scalar, we have
\begin{equation}
\label{reflectionformula}
g=r_{a,\xi}:=I-a(1-\xi)a^*,
\end{equation}
a rank $1$ perturbation of the identity, so that
$$ g a = a-a(1-\xi)a^*a = a-a(1-\xi)=a\xi, \qquad gx=x-0=x, \quad\hbox{for $\inpro{a,x}=a^*x=0$}. $$
The fact that this $g$ is unitary (in the quaternionic case) is not entirely trivial:
\begin{align*}
g^*g &= (I-a(1-\overline{\xi})a^*)(I-a(1-\xi)a^*) \\
&= I-a(1-\overline{\xi}+1-\xi)a^*+a(1-\overline{\xi})a^*a(1-\xi)a^* \\
&= I-a(2-2\Re(\xi))a^*+a(1-2\Re(\xi)+|\xi|^2)a^*=I.
\end{align*}
If $g$ has order $n$, then
$$ g^n a  = a\xi^n=a \Implies \xi^n=1, $$
so that $\xi$ is a primitive $n$-th root of unity in $\HH$. 
If a different unit vector in the reflected space is taken, say $a\gb$, then
$$ g=I-a(1-\xi)a^* 
= I-(a\gb)\overline{\gb}(1-\xi)\gb (a\gb)^*
= I-(a\gb) (1- \overline{\gb} \xi \gb) (a\gb)^*, $$
so that
$$ r_{a,\xi}=r_{a\gb,\overline{\gb}\xi\gb}, $$
i.e., the the primitive $n$-th root changes, unlike in the real and complex cases,
where the formula above also holds, with $\overline{\gb}\xi\gb=\xi$.
Thus
\begin{itemize}
\item For a quaternionic reflection $g$, a root is given by any unit vector $a$ in the 
line given by the image of $I-g$, and the corresponding scalar $\xi$ is given by 
$$ ga=a\xi \Implies \xi=a^*ga. $$
\item In view of (\ref{reflectionformula}), any unit vector $a$ in the $1$-dimensional space 
spanned by a root for a reflection, can be taken as a root. To determine a reflection
one must choose a root $a$ and corresponding $\xi$.
The technical way that reflections are described through suitable indices
$(a,\xi)$ is via a ``root system''
\end{itemize}

Here are some possible definitions. 
Given a set finite set $\Phi=\{(a,\xi)\}$ defining
reflections $\{r_{a,\xi}\}$, we can define the set
{\bf reflections} and {\bf root lines} by
$$ \Phi_R:=\{ r_{a,\xi}:(a,\xi)\in\Phi\}, \qquad
\Phi_L := \bigcup_{(a,\xi)\in\Phi} a\HH. $$
Obviously, $|\Phi_R|\le|\Phi_L|$, and these are effectively 
that same object if they have the same cardinality.
The following condition ensures that the reflections generate 
a finite (reflection) group.

\begin{lemma} Let $\Phi\subset\Hd\times U_1(\HH)$ be finite. If $\Phi$
satisfies
	$$ g\cdot(b,\mu):=(gb,\mu)\in\Phi, \qquad
	\forall g\in\Phi_R, \quad (b,\mu)\in\Phi, $$
	then the (Weyl) group
$$ G=W(\Phi):=\inpro{g:g\in\Phi_r}=\inpro{r_{a,\xi}:(a,\xi)\in\Phi}. $$
is a finite reflection group.
\end{lemma}

\begin{proof} It is easily verified that $g\cdot(b,\mu)=(gb,\mu)$
and $g\cdot b=gb$ define actions of $G$ on the
finite sets $\Phi$ and $\{b\}_{(b,\mu)\in\Phi}$.
These actions are faithful, since
$$ gb=b, \quad\forall b \Implies \hbox{$g=I$ on $\spam\{b\}$}, $$
whilst 
$$ \hbox{$r_{a,\xi}=I$ on $a^\perp$} \Implies
\hbox{$g=I$ on $\cap a^\perp = (\spam\{b\})^\perp$}. $$
Thus $G=W(\Phi)$ is isomorphic to a group of permutations on a 
finite set, and hence is a finite group.
\end{proof}

Here is a key observation, the explicit formula for conjugation of a reflection
in $U(\Hd)$
$$ g r_{a,\xi} g^{-1} = g(I-a(1-\xi)a^*)g^*
I-ga(1-\xi)(ga)^*=r_{ga,\xi}, \qquad g\in U(\Hd). $$
Hence a group containing reflections $r_{a,\xi}$ and $g=r_{b,\eta}$ contains
the reflection $r_{ga,\xi}$, so
$$ ga=r_{b,\eta}a $$
is a root of a reflection in the group, and hence
\begin{itemize}
\item A reflection group permutes the root lines of its reflections.
\item A reflection group acts on it reflections via conjugation.
If this action is faithful, then it would follow that the reflection 
group is finite if and only if it has finitely many reflections.
\item A reflection group acts on the (infinite) set of indices for
	its reflections via $g\cdot (a,\xi)=(ga,\xi)$.
		This action corresponds to conjugation of reflections.
\end{itemize}

We have natural actions of a reflection group on 
\begin{itemize}
\item the indices of its reflections
\item the reflections themselves
\item the lines of its reflections.
\end{itemize}
It follows the reflection group is finite if one can find a finite
subset (a union of orbits) on which the action is faithful. 
There is a natural inclusion above: the reflections correspond 
to an equivalence class of indices
(those giving the same index), 
and the lines to an equivalence class of reflections (those with
the same root line).

We say $\Phi$ is an {\bf index system} for the reflection group $G$ 
that it generates if it is a $G$-orbit, i.e.,
\begin{equation}
\label{Ginvarcdn}
(r_{a,\xi}b,\mu)\in\Phi, \qquad\forall (a,\xi),(b,\mu)\in\Phi.
\end{equation}
A reflection group is finite if and only if it has a finite
index system.

We can define a binary operation $\cdot$ on $\Hd\times U(\Hd)$ via
$$ (a,\xi)\cdot(b,\mu):=(r_{a,\xi} b,\mu). $$
Then $\Phi$ is an index system if and only if it is closed under $\cdot$.
To define an index system, it is therefore enough to give a 
generating subset. On the other hand one could give quite a large 
index system, e.g., Cohen's pre-root systems, which require that the
scalars $\xi$ corresponding to a vector $a$ form a subgroup.

If $\Phi$ is an index system, then so is 
$$ \Phi_g :=\{(ga,\xi):(a,\xi)\in\Phi\}, \qquad g\in U(\Hd), $$
with 
$$ W(\Phi_g)=gW(\Phi)g^{-1}.$$
This follows from the calculation
$$ (ga,\xi)\cdot(gb,\mu) 
= (r_{ga,\xi} gb,\mu)
= (g r_{a,\xi} g^{-1} gb,\mu)
= (g r_{a,\xi} b,\mu). $$
Further, 
$$ \Phi_\gb := \{(a\gb,\overline{\gb}\xi\gb):(a,\xi)\in\Phi\}, 
\qquad \gb\in U(\HH), $$
is an index system with
$$ W(\Phi_\gb) = W(\Phi), $$
since
$$ (a\gb,\overline{\gb}\xi\gb)\cdot (b\gb,\overline{\gb}\mu\gb)
= (r_{a\gb,\overline{\gb}\xi\gb} b\gb,\overline{\gb}\mu\gb)
= (r_{a,\xi} b\gb,\overline{\gb}\mu\gb). $$

\begin{example} This construction is motivated by the real case. 
Now suppose the same definitions for the real case, 
with the $a\in\Rd$ being allowed
any norm. Here $\xi=-1$ is the only possible scalar, and so the
formula for $r_{a,\xi}$ reduces to
$$ r_a=r_{a,-1}=I-2 {aa^*\over\inpro{a,a}}, \qquad
   r_a(b)=b-2{\inpro{a,b}\over\inpro{a,a}} a, $$
and we can take $\Phi$ to consist of ``roots'' $a$ corresponding to $(a,-1)$.
The main condition for $\Phi$ to be a {\bf root system} is that
$$ r_a(b)=b-2{\inpro{a,b}\over\inpro{a,a}} a\in\Phi, 
\qquad\forall a,b\in\Phi, $$
which is (\ref{Ginvarcdn}).
There are other conditions: the roots span the space (a technicality),
and the only scalar multiples of $a\in\Phi$ are $a$ and $-a$ (again a 
technicality - removing it gives a {\bf reduced} root system).
The last condition is {\bf integrality}
$$ 2{\inpro{a,b}\over\inpro{a,a}} \in\ZZ, $$
	gives a {\bf crystallographic} root system.

Note, that for root systems, the lengths of the roots need not all be 
equal. Different lengths therefore correspond to different orbits, and
so the actions are in general not transitive?
\end{example}

The set of all reflections ... is a root system?

I think you can see if you take a $b$ from each orbit of the lines, one can 
construct such a system.

We say that $\Phi$ is a {\bf root system} if
$$ r_{a,\xi}(b)\in\Phi_L, \qquad (a,\xi),(b,\eta)\in\Phi. $$
A root system is effectively a set of reflections, satisfying a
geometric condition. This probably needs additional conditions to
form a finite group, in the general case.

A given reflection $g$ has many it can be expressed in terms of a 
``root vector''. We seek a finite set $\Sigma$ of pairs $(a,\xi)$ 
from which it can be concluded the
group generated by the corresponding reflections $\Sigma_R$ 
is a finite (reflection) group.
The basic condition is  
$$ g \cdot (b,\mu) := (gb,\mu)\in\Sigma, \qquad \forall g\in\Sigma_R, \quad
(a,\gl)\in\Sigma. $$

There could potentially be more than one reflection for a given root:

\begin{itemize}
\item If $g$ is a reflection, then so are its inverse and any nonidentity power of $g$, 
and they all have the same root.
\item For $d=1$, there is only one possible root, and all reflections have 
the same root, i.e., the unit quaternions $\xi\ne1$ are reflections, 
		with the natural multiplication.
\end{itemize}

Indeed, the only case when there is a $1$--$1$ correspondence between the reflections 
in a reflection group and their roots is when $d\ge2$ and all reflections have order $2$, 
i.e., $\xi=-1$, and the 
reflection is defined by the root alone.
The abstract versions of such reflection groups are called Coxeter groups.
A helpful example is
$$ \pmat{\pm i&0\cr0&1},\quad \pmat{\pm j&0\cr0&1}, \quad \pmat{\pm k&0\cr0&1}, $$
which are six reflections of order $4$, with the same root $e_1$. It is natural to 
lift reflections to higher dimensions, by simply adding a additional dimensions 
(which are fixed), as in the case above.  Therefore the reflection groups in $U_1(\HH)$
are of special interest. They have been classified by Stringham \cite{S81}
(also see \cite{C80}, \cite{CS03}, \cite{LT09}).

%We observe that the conjugate $h^{-1}gh$ of a reflection $g$ is a reflection, since
%$$ \rank(I-h^{-1}gh) = \rank(h^{-1}(I-g)h) = \rank(I-g). $$
%Correspondingly, reflection groups are classified up to conjugation in $U_d(\HH)$.
%Indeed, the whole purpose of my investigation is to find ``nice'' conjugate of $H$.

We can now give information about the irreducible group $H$ of (\ref{Hdef}), obtained
by creating it in Magma, using the following ``complexification''
\begin{equation}
\label{ssymplecticform}
g=A+Bj\in M_d(\HH) \Iff [g]_\CC:=\pmat{A&-B\cr \overline{B}&\overline{A}},
\quad v=z+wj\in\Hd \Iff [v]_\CC:=\pmat{z\cr\overline{z}},
\end{equation}
where $[\cdot]_\CC$ is $\CC$-linear, $[gv]_\CC= [g]_\CC[v]_\CC$, etc.

%The other conjugacy class of elements of order $3$ is the negatives of the reflections.

%Now consider the action of the $40$ reflections on the lines. None fix a line, and they
%are in correspondence with the permutations in $A_6$ consisting of a product of two 
%disjoint three cycles, of which there are 
%$$ {6\choose3} \cdot 2 = 40. $$
%The other $40$ elements of order $3$ fix three lines, and cycle the other three.
%Here there are $20$ possibilities, so there are two elements of order three 
%which fix a given triple of lines and cycle the other three in a given order.

$$ \hbox{The composition series:} \qquad
\mat{2\cdot A_6 & \cr | & A_6 \cr * & \cr | & \ZZ_2 \cr  1 & } $$

The subgroup indexing is as follows: the $22$ subgroups of $A_6$ correspond to $22$ subgroups
of $2\cdot A_6$ with twice the order, and an additional $5$ subgroups of $2\cdot A_6$ with 
the same order for the orders $1,3,3,5,9$.

rrrrrrrr

so that the first four groups are nested reflection groups, generated by $1,2,3,4$ reflections
of order $3$ (containing $2,8,20,40$ reflections, respectively). 
It is not immediately obvious that the last is a reflection group, since Blichfeldt's
generator $b_5$ is not a reflection (though it has interesting action on the lines).
By searching, it is easy enough to find a reflection order $2$, which generates the
last group, and so it too is a reflection group. This group has $40$ reflections of order $3$ 
(no new ones added) and $30$ reflections of order $2$. This group is generated by 
the reflections of order $2$, it seems to require $5$ generators, so there are 
lots of reflection subgroups generated by elements of order $2$.
The reflections of order $2$ are all conjugate, and the only other element of
order $2$ is $-I$. 
It begs the question: are there any subgroups which are not reflection groups?

It seems the complex reflection subgroup of order $24$, gives the SIC as 
an orbit. The corresponding Heisenberg subgroup of order $8$ is representation
of {\tt <8,4>}, of which there is only one of rank $2$. It does not seem 
to contain reflections (the classical Heisenberg group is generated by $2$ reflections
of order $2$). Indeed the classical Heisenberg group is given by a different projective
representation of $\ZZ_2\times\ZZ_2$, namely {\tt <8,3>}, so the SIC in two dimensions
is the orbit of two different projective representations. 
Are the

The $15$ lines at angles ${1\over4}^{90}$, ${5\over*}^{120}$, I guessed.
\begin{align*}
\multicolumn{7}{l}{\hbox{\bf The line systems corresponding to reducible subgroups} } \\[0.3cm]
	\hbox{Subgroup} & \quad \hbox{length} && \quad \hbox{fiducial} && \quad \hbox{angles} && \quad  \hbox{number of lines} & \hbox{$t$} \\[0.2cm]
	\hbox{\tt <120, 5>} & \quad 6 && \pmat{1\cr 0} && \quad {2\over5} && 6 && \quad (2,2)  \\[0.2cm]
\end{align*}

\begin{tabular}{ >{$}l<{$} | >{$}c<{$} | >{$}l<{$} | >{$}l<{$} | >{$}l<{$} | >{$}l<{$} | >{$}l<{$} }
\multicolumn{7}{l}{\hbox{\bf The line systems corresponding to reducible subgroups of $2\cdot A_6$} } \\[0.3cm]
	\hbox{Subgroup} & \hbox{length} & \hbox{fiducial} & \hbox{angles} & \hbox{number of lines} & t \\[0.2cm]
\hbox{\tt <120, 5>} & 6 & \pmat{\sqrt{2}+\sqrt{10}\cr 2i-2j} & {2\over5} & 6 & 2  \\[0.2cm]
\hbox{\tt <48, 28>} & 15 & \pmat{1\cr j} & {1\over4},{5\over8} & 15 & 2 \\
\hbox{\tt <36, 7>} & 10 & \pmat{1\cr 0} \ \hbox{(any root vector)} & 0,{1\over3},{2\over3}& 20 & 3 \\
	\hbox{\tt <24, 3>} & 15 &\pmat{\sqrt{2}i\cr1+\sqrt{3}} \hbox{MUBS} & 0,{1\over4},{1\over2},{3\over4} & 30 & 3 \\
\hbox{\tt <20, 1>} & 36 &&& 36 \\
\hbox{\tt <18, 5>} & 10 &&& 40 & \\
\hbox{\tt <16, 9>} & 45 &&& 45 & \\
	\hbox{\tt <12, 1>} & 60, 60 & \pmat{\sqrt{2}+\sqrt{10}\cr -2i-2j} & {2\over15},{4\over15},{2\over5},{7\over15},{8\over15},{2\over3},{11\over15},{14\over15} & 60 & 2 \\
	\hbox{\tt <12, 1>} & 60, 60 & \pmat{1\cr i+j} & \hbox{see below} & 60 & 2 \\
\hbox{\tt <10, 2>} & 36 &&& 72 & \\
\hbox{\tt <9, 2>} & 10 &&& 80 & \\
\hbox{\tt <8, 4>} & 15,15 & \pmat{1\cr e^{2\pi\over5}} & \hbox{many} & 90 & 2 \\
%\hbox{\tt <8, 4>} & 15,15 & \pmat{1\cr -1 & {1\over8},{2\over8}, \ldots {7\over8} & 90 & 3 \\
\hbox{\tt <8, 4>} & 15,15 & \pmat{1\cr 1} & 0,{1\over8},{2\over8}, \ldots {7\over8} & 90 & 3 \\
\hbox{\tt <8, 1>} & 45 & & & 90 & \\
\hbox{\tt <6, 2>} & 20, 20 &&& 120 & \\
\hbox{\tt <5, 1>} & 36 &&& 144 & \\
\hbox{\tt <4, 1>} & 45 & \pmat{1\cr i} & 
  \{0,{1\over24},\ldots{23\over 24}\} \setminus \{{1\over8},{1\over2},{7\over8}\} & 180 & 3 \\
\hbox{\tt <3, 1>} & 20, 20 &&& 240 & \\
\hbox{\tt <2, 1>} & 1 & \pmat{1\cr e^{2\pi\over5}j} & \hbox{many}  & 360 & 3 \\
\hbox{\tt <1, 1>} & 1 & \hbox{generic vector} && 360 & 2 \\
\\[0.2cm]

\multicolumn{7}{l}{\hbox{\footnotesize The above fiducial vectors are for the reflection group with complex reflection group as
a subgroup. To find the corresponding fiducial vectors for Cohen's presentation of the
group multiply the vectors by $u$.  } }
\end{tabular}
${}^*$ The above fiducial vectors are for the reflection group with complex reflection group as
a subgroup. To find the corresponding fiducial vectors for Cohen's presentation of the
group multiply the vectors by $u$.

The angles for the other $60$ lines:
$$ {1\over27}(12+\sqrt{2}), {2\over27}(9+2\sqrt{2}),
{1\over27}(21-4\sqrt{2}), {4\over9}, {2\over27}(3-\sqrt{2}), 
{1\over27}(15+2\sqrt{2}), {1\over3}, {2\over27}(12-\sqrt{2}),  
{1\over27}(9-\sqrt{2}), $$

We have 
$$ g=A+Bj, \qquad g^*=A^*-jB^*=A^*-B^Tj, $$
$$ g^* g = (A^*-B^Tj)(A+Bj) = (A^*A+B^T\overline{B})+(A^*B-B^T\overline{A})j, $$
$$ g  g^* = (A+Bj)(A^*-B^Tj) = (AA^*+BB^*)+(BA^T-AB^T)j, $$
so for $g$ to be unitary we have
$$ A^*A+B^T\overline{B} = AA^*+BB^*=I, \qquad A^*B-B^T\overline{A}=BA^T-AB^T=0. $$

\begin{lemma}
	$g=A+Bj$ is unitary if and only if $[g]_\CC$ is unitary if and only if $A$ and $B$
	satisfy
	$$ A B^T = B A^T, \qquad \ldots $$
\end{lemma}

\begin{proof}
Calculate the conditions for being equal, and verify that the give the same equations.
	$$ \pmat{A&-B\cr\overline{B}&\overline{A}} \pmat{A^* & B^T\cr -B^* & A^T}
	=\pmat{AA^*+BB^* & AB^T-BA^T \cr \overline{B}A^*-\overline{A}B^* & \overline{B}B^T+\overline{A}A^T } $$
\end{proof}

\begin{itemize}
\item The notion of ``parabolic subgroup'' of an irreducible group, would be a 
maximal irreducible subgroup, which ideally would have an irreducible actions on
		a $1$-dimensional subspace (the trivial action corresponding to highly symmetric tight frames) and on a $(d-1)$-dimensional subspace.
\item The action of the subgroup of order $120$ fixing the equiangular line is faithful, 
so far from fixing it, as is its action on the orthogonal vector.
Thus as an $\HH G$-module $\HH^2$ is written as the orthogonal direct sum of two $\HH G$-isomorphic
irreducible submodules, akin to a homogeneous component, but I don't think there are any 
other such subspaces, as they would new sets of equiangular lines.
\end{itemize}

\begin{align*}
gx &=(A+Bj)(z+wj) 
= Az+Awj+Bjz+Bjwj
= Az+Awj+B\overline{z}j-B\overline{w} \cr
&= (Az-B\overline{w})+(Aw+B\overline{z})j,
\end{align*}
$$ x^*=(z+wj)=z^*-jw^*= z^*-w^T j, $$
\begin{align*}
\inpro{x,gx}
&=z^*\{(Az-B\overline{w})+(Aw+B\overline{z})j\}
-w^Tj\{(Az-B\overline{w})+(Aw+B\overline{z})j\} \cr
	& =\{z^*(Az-B\overline{w})+w^T(\overline{A}\overline{w}+\overline{B}z)\}
	+\{ z^*(Aw+B\overline{z})-w^T(\overline{A}\overline{z}-\overline{B}w)\} j
\end{align*}

$$ \{z^*(Az-B\overline{w})+w^T(\overline{A}\overline{w}+\overline{B}z)\}
\{(z^*A^*-w^TB^*)z +(w^T A^T+z^* B^T)\overline{w}\} $$

The paper of \cite{BST23} considers ``parabolic subgroups'', i.e., those fixing points.
The minimal parabolic subgroups are of complex rank $2$, i.e., are rank $1$, e.g., $H_3$
(which fixes $e_1$). 
The maximal parabolic subgroups fix a vector (\cite{BST23} Lemma 4.2), i.e., have rank $d-1$.
This ``nonprojective'' view misses out on the equiangular lines, which have
trivial pointwise stabilisers. And, in our case $d=2$, so the maximal and minimal parabolic
subgroups are precisely those reflection groups generated by reflections which fix the same 
space.

There is subtlety, in dealing with the complexification, instead of developing a theory 
of representations over $\HH$. For example, if a group acts irreducibly over $\Hd$ 
then does it have an irreducible complex action?
No, subgroup 19, of order $24$ is irreducible as a quaternion group, but its complexification is not.
It satisfies Serre's condition for the $\HH$-trace, but not the $\CC$-trace.
This is the only one of the $27$ subgroups of $H$ for which irreducibility is not the same
for the complexification.

\begin{itemize}
\item The six quaternionic equiangular lines are fundamentally 
related to four nested irreducible quaternionic reflection groups, three of them imprimitive,
which can effectively 
be obtained as symmetries of them.
Conversely, given the imprimitive quaternionic reflection groups one can construct 
the equiangular lines from them.
\item Consider the six lines, and their orthogonal complements. The action of $H$ is 
	as $A_6$ on each set of lines, and hence on the ``crosses'', 
		and the action of the super group is $S_6$ on the crosses.
\item One could even start with the abstract group $S_6$ find that it has a projective
	representation on $\HH^2$, or $A_6$, ...
\item In relation to SICs the appearance of reflection groups may be incidental with 
	$d=2$, if there is some analogue of the 
\item There some special duality for $d=2$, as the orthogonal complement of a line
	($1$-dimensional subspace) is also line, so unitary maps have the same action 
		on system of lines, and the orthogonal complement of the lines.
\end{itemize}

The $H_{24}$ and $H_{120}$ have a single conjugacy class of elements of order $3$,
and hence all are reflections.

The {\bf Schur indicator} of a complex representation of a finite group $G$ is
$$ \gs_\CC(G):= {1\over|G|}\sum_{g\in G} \chi(g^2), $$
where $\chi$ is the character of the representation.
For the complexification of a quaternionic representation this is
$$ {1\over|G|}\sum_{g\in G} \trace([g^2]_\CC). $$
For $g=A+Bj$, 
$$ g=A+Bj=A_g+B_g j \Iff [g]_\CC = \pmat{A&-B\cr\overline{B}&\overline{A}}, $$
$$ g^2 = (A+Bj)(A+Bj) = A^2-\overline{B}B +(AB+B\overline{A})j, $$
so the ``complex'' Schur indicator is
$$ {1\over|G|}\sum_{g\in G} \trace(A^2-\overline{B}B + \overline{A}^2-B\overline{B}). $$
One can formally define the ``quaternionic Schur indicator function'' for 
quaternionic representation $G$ by
$$ \gs_\HH(G):= {1\over|G|}\sum_{g\in G} \chi(g^2), $$
which is given by 
$$ \gs_\HH(G):= {1\over|G|}\sum_{g\in G} \trace(A^2-\overline{B}B +(AB+B\overline{A})j). $$
Since $g^*=A^*-B^Tj$, adding the $j$-terms for $g^2$ and $(g^2)^{-1}=(g^*)^2$ together gives
\begin{align*}
\trace((AB+B\overline{A})j &+(A^*(-B^T)+(-B^T)\overline{A^*})j) \cr
&= \bigl\{ \trace(AB)+\trace(B\overline{A})-\trace((B\overline{A})^T)-\trace((AB)^T)\bigr\}j= 0, 
\end{align*}
and so (observing for $g=I$ the summand is $0$), we have (by the same argument)
$$ \gs_\HH(G):= {1\over|G|}\sum_{g\in G} \trace(A^2-\overline{B}B)
={1\over2} {1\over|G|}\sum_{g\in G} \trace(A^2+\overline{A}^2-\overline{B}B-B\overline{B})
={1\over2}\gs_\CC(G). $$

Therefore, we have the following classes of irreducible quaternionic reflection groups
\begin{itemize}
\item Those which are primitive and imprimitive over $\HH$.
\item The primitive ones are then split into those with primitive and imprimitive complexifications.
\item Along the way (Prop 1.2) it is claimed that an irreducible ``proper'' quaternionic
subgroup has an irreducible complexification. I think this is false. The devil is
in the detail: the definition of ``proper quaternionic'' which seems to exclude
some groups, which don't seem to be ``complex''. The example above has Schur indicator $0$,
but it's character values are all real.
\item The complexification is irreducible, so that $\gs_\HH(G)=-{1\over2}$.
	This splits into two classes, the complexification is imprimitive and primitive.
\item The complexification is reducible, 

\end{itemize}

The invariant subspaces for the complexification, are $\CC$-vector spaces 
which correspond to $\CC$-vector subspaces of $\Hd$.
For a line in $\Hd$ given by $z+wj$ 
$$ (z+wj)(a+bj)=(z+wj)a+(z+wj)j\overline{b}
=(z+wj)a+(zj-w)\overline{b}, $$
so it corresponds to a $2$-dimensional complex subspace spanned by
$z+wj$ and $-w+zj$, which correspond to orthogonal vectors
$$ [z+wj]_\CC=\pmat{z\cr\overline{w}}, \qquad [-w+zj]_\CC=\pmat{-w\cr\overline{z}}, $$

\begin{example} The complexification of $g=A+Bj$ is 
$$ [g]_\CC := \pmat{A&-B\cr\overline{B}&\overline{A}}. $$
	Consequently, since for $g$ unitary, $g^{-1}=g^*=A^*-B^Tj$,
	$$ \trace([g]_\CC)=\trace([g^{-1}]_\CC)=\trace(A)+\overline{\trace(A)}\in\RR, \qquad
	\forall g\in U_d(\HH), $$
where as
$$ \trace(g)=\trace(A)+\trace(B)j, \quad 
	\trace(g^{-1})= \trace(A^*)-\trace(B^T)j=\overline{\trace(A)}-\trace(B)j $$
\end{example}

I believe Serre's condition for irreducibility holds in both cases. Here
$$\trace(g)\trace(g^{-1})
=(\trace(A)+\trace(B)j)(\overline{\trace(A)}-\trace(B)j)
=|\trace(A)|^2+|\trace(B)|^2, $$
$$ \trace([g]_\CC)\trace([g^{-1}]_\CC)
= (\trace(A)+\overline{\trace(A)})^2
= 2|\trace(A)|^2+\trace(A)^2+\overline{\trace(A)}^2. $$

\begin{itemize}
\item A unitary group $G$ acting on $\Hd$ has a $1$-dimensional invariant subspace spanned by $v$ if 
and only if
\begin{equation}
\label{fixedvectoreqns}
|\inpro{v,gv}|^2 = \inpro{v,v}^2, \qquad\forall g\in G.
\end{equation}
This gives $|G|$ quartic equations $v$, that one might try to solve. 
For us, we considered $v=(0,1)$ and $v=(1,y)$, with $y$ as a function of four
real variables. I was unable to get Magma to solve (\ref{fixedvectoreqns}), 
or a subsystem of it, but could undertake the calculations in Maple.
\item A group $G$ is irreducible 
\end{itemize}

The notion of irreducible (and hence reducible) actions naturally extend to the
quaternionic setting. It follows that if the action is reducible, i.e., there is 
an invariant subspace, then it is also invariant on the orthogonal complement,
and we have decomposition into invariant subspaces. 
The uniqueness of this decomposition, is not a straight forward as in the real and complex cases.
If $W=[w_1,\ldots,w_m]$ is an orthonormal basis for an invariant subspace, then the 
matrices of the restriction of $G$ to this subspace are given by 
$$ [g]_W := W^* g W, \qquad g\in G. $$
In the case of a $1$-dimensional representation, the matrices depend on the scalar multiple 
of $w_1$ taken, unlike in the real and complex cases.

\begin{itemize}
\item
It seems that the orbit of a fixed vector for a reflection has $240$ elements,
of which a group of $12$ scalars act on the right.
This suggests each vector appears $12$ times giving $20$ lines.
Taking angles gives %0^^12, 1/3^^108, 2/3^^108, 1^^12
		, 
so dividing by $12$, we have $10$ vectors, with the angle sizes:
		$0$ once, ${1\over3}$ and ${2\over3}$ nine times each, and $1$ once. 
Perhaps this is a spherical design;
\item For a given orbit, the set of scalars which maps one vector to another in the orbit
	forms a group, who orbit is the set of lines.
		So every orbit gives a set of lines whose cardinality divides the order of the group.
	\item For our group of order $720$, we have an orbit of $6$ equiangular lines,
		a $(2,2)$-design, and an orbit of $20$ lines which is a $(3,3)$-design.
\end{itemize}

The reflections with a given root $v$ naturally form a subgroup of $G_v$, and the
corresponding group of scalars (denoted by $H_v$ in \cite{C80}) form a 
normal subgroup of $\HH_{G,v}^*$.

For $G$ irreducible and $v$ a generic vector, I expect $G_v=1$.
However, there are still subspaces of vectors giving $G_v\ne1$, e.g., if
$w$ is any vector in the fixed subspace of a reflection with root $v$, 
then $G_w$ contains the reflection subgroup for $v$.
Therefore the vectors $v$ for which $G_v$ is large are of particular interest.

, and for a root vector of a reflection
$G_v\ne 1$. 

The $O_2$ roots of Cohen
$$ \pmat{(1-k){\sqrt{3}\over\sqrt{2}}\cr0}, \pmat{0\cr\gam-1},
\pmat{\gd^\ell j\sqrt{2}\cr\gam^m},\pmat{-\gd^\ell j{1\over\sqrt{2}}\cr\gam^m} (1-k), 
\qquad \ell,m =0,1,2, $$
Since $j$ commutes with $\gd$, we have
\begin{align*}
\inpro{ \pmat{\gd^\ell j\sqrt{2}\cr\gam^m},\pmat{-\gd^\ell j{1\over\sqrt{2}}\cr\gam^m} (1-k)}
	&=\Bigl\{\overline{\gd}^\ell(-j)\sqrt{2}(-\gd^\ell j{1\over\sqrt{2}})
+\overline{\gam}^m\gam^m \Bigr\}(1-k) \cr
&=(-1+1)(1-k)=0, 
\end{align*}
$$ \Sigma =\{(a\xi,\eta):a\in P,\xi\in \cD_3,\eta\in \cC_3\}, \qquad
|\Sigma|=20\cdot 12 \cdot 3 = 720. $$

The stabiliser groups of the lines are restricted to be isomorphic to
the subgroups of $\HH^*$. 

Since $H$ is irreducible, the smallest number of lines that can appear 
as an orbit, are $6={720\over120}$, $10={720\over 72}$, $15={720\over48}$,
corresponding to projective stabiliser groups which are maximal reducible subgroups of
orders $120$, $72$, $48$. The six equiangular lines we know of, which correspond
to the reducible subgroup of order $120$ (there are two, the other being irreducible).
We can check the possible orders of the reducible subgroups of $H$. They are
$$ 120,48,36,24,20,18,16,12,10,9,8,6,5,4,3,2,1, $$
giving
$$ 6,15,20,30,36,40,45,60,72,80.90,120,144,180,240,360,720 $$
lines as an orbit.

We just need to be able to find the stabilised vector ....

In view $\HH_{v\gb} = \gb^{-1}\HH_v\gb$, the ``eigenvalue'' corresponding to a 
fixed vector is only defined up to conjugacy -- I think perhaps it can be 
taken to be root of unity (in $\CC$), with ``root vectors'' corresponding to
the special case of $1$ as an eigenvalue.
Indeed, for $U_a$, taking $\gl=i$ gives $\rank(U_a-iI)=1$, which I suppose 
gives one of the fixed lines $v_5$ or $v_6$, the other, presumably for $\gl=-i$.

Should be able to give $2\cdot A_6$ as permutation group on $20$ letters?

\begin{example} Observe that eigenvalues are only defined up to similarity:
$$ Av=v\gb \Iff A(v\gb) = v\gb (\gb^{-1}\gl\gb), $$
and the similarity class of $\gl$ is all unit quaternions $q$ with 
	$\Re(q)=\Re(\gl)$, so we can always assume that $\gl\ne\pm1$ is one 
	of two elements of $\CC$.
\end{example}

The hope is to be able to go from the abstract group $2\cdot A_6$ to the systems of lines
(as with the highly symmetric tight frames).

\vfil\eject

\subsection{The $P$ groups of orders $320$, $1920$, $3840$}

For these groups, every orbit is a $(3,3)$-design.

For the presentation in Cohen, taking the orbit of $e_1$ gives a $10$ lines, 
which are five MUBs, giving a $(3,3)$-design via
$$ 1\cdot 0+ 8\cdot\bigl({1\over2}\bigr)^3+1\cdot 1^3 = 2 = {1\over 5} \cdot 10. $$
Theses therefore have projective symmetry group (containing) of order $1920$ (express this 
as a group of permutations on the pairs of lines).

The orbit of $(\sqrt{2},i-k)$ or $(\sqrt{2},1+j)$ for the first group gives $20$ lines
(the corresponding groups are not subgroups of the subgroup for the $10$ lines), 
with angles $\{0,{1\over4},{1\over2},{3\over4}\}$, via
$$ 1\cdot 0^3+4\cdot\bigl({1\over4}\bigr)^3+10\cdot \bigl({2\over4}\bigr)^3
+4\cdot\bigl({3\over4}\bigr)^3 + \cdot 1^3  = 4 = {1\over5}\cdot 20. $$
The orbit under the two larger groups gives $40$ vectors, with the same angles
$$ 1\cdot 0^3+12\cdot\bigl({1\over4}\bigr)^3+14\cdot \bigl({2\over4}\bigr)^3
+12\cdot\bigl({3\over4}\bigr)^3 + \cdot 1^3  = 8 = {1\over5}\cdot 40. $$

The orbit of the vector $(1+\sqrt{5},1+i+j-k)$ for the first two groups gives $16$ lines
with angles $\{{1\over5},{3\over5}\}$, which give a $(3,3)$-design via
$$ 5\cdot\bigl({1\over5}\bigr)^3+10\cdot \bigl({3\over5}\bigr)^3 + 1^3 
= {80\over 25}  = {1\over 5} \cdot 16. $$
The orbit under the largest group gives $32$ lines at angles 
$\{0,{1\over5},{2\over5},{3\over5},{4\over5}\}$, with
$$ 1\cdot 0+ 5\cdot\bigl({1\over5}\bigr)^3
+10\cdot \bigl({2\over5}\bigr)^3 
+10\cdot \bigl({3\over5}\bigr)^3 
+5\cdot \bigl({4\over5}\bigr)^3 
+ 1^3 = {1\over 5} \cdot 32. $$

\vfil\eject
\subsection{Identifying the reflection group}

A reflection group $G$ is said to be {\bf imprimitive} if the space on which it acts
can be decomposed into proper subspaces which it permutes, a so called system of imprimitivity,
otherwise it is said to be {\bf primitive}. For a reflection group (which is unitary) 
acting on $\HH^2$, we can assume the system of imprimitivity is given by the standard basis vectors
$$ V_1=\spam_\HH e_1, \qquad  V_2=\spam_\HH e_2, $$
so that its elements have the (monomial form)
$$ \pmat{a&0\cr0&d}, \quad \pmat{0&b\cr c&0}, \qquad |a|=|b|=|c|=|d|=1. $$
For such a matrix $g$ to be a reflection, it must fix a unit vector $w=(x,y)$, i.e.,
\begin{align*}
\pmat{a&0\cr0&d}\pmat{x\cr y} =\pmat{ax\cr dy}=\pmat{x\cr y} 
& \Iff g=\pmat{a&0\cr0&1}, \ w=e_2, \qquad g=\pmat{1&0\cr0&d}, \ w=e_1, \cr
\pmat{0&b\cr c&0}\pmat{x\cr y} = \pmat{by\cr cx}=\pmat{x\cr y} 
& \Iff c=yx^{-1}=(xy^{-1})^{-1}=b^{-1} \cr
& \Iff g=\pmat{0&b\cr b^{-1}&0}, \ w={1\over\sqrt{2}}\pmat{1\cr b^{-1}}.
\end{align*}
A group generated by matrices of the first type consists of diagonal matrices, and 
so cannot be irreducible (it fixes $V_1$ and $V_2$), and so an imprimitive reflection group
must contain at least one reflection of the second type, for which order of the reflection is two, i.e.,
$$ \pmat{0&b\cr b^{-1}&0}^2=\pmat{1&0\cr0&1}. $$
Thus we conclude 
\begin{itemize}
\item The symmetry group of the six quaternionic lines is a primitive reflection group.
\end{itemize}

Show the complexification of a quaternionic unitary matrix has determinant $1$.

\begin{enumerate}
\item There is a way to go from the abstract error group, to the reflection 
	group, to the root lines of the reflections.
\item Want a way to go from the reflection group, or its root lines (root system)
	to the equiangular lines.
\item This seems to involve the theory eigenvalues and eigenvectors of quaternionic
	matrices.
\end{enumerate}

Given the above, perhaps it is easiest to determine the reflection group
in question from Cohen's list. Its order is
$$ 720 = 2^4\cdot 3^2 \cdot 5. $$
If it is imprimitive, then 
It is not one of the sporadic ones, so it must be in an infinite family, by 
considering the order, we have candidates

Given that our matrices are acting on the left, this is a left multiple of the vector, 
i.e., so it is not what refer to as a scalar multiple, as we view $\Hd$ as a
right vector space. 

\begin{itemize}
\item By a magma calculation, every orbit of $H$ is a spherical $(2,2)$-design,
	but not a $(3,3)$-design.
\item A smaller orbit, the root lines, is a minimal $(3,3)$-design.
\item The equiangular lines might be found as a fiducial vector for a smaller 
	irreducible subgroup.
\item There are (up to conjugacy) two subgroups of order $120$, the stabilisers of an
	equiangular line, or a group for which every orbit.
\item $H$ has two maximal isomorphic subgroups up to conjugacy, the one giving the lines 
	is not even irreducible, i.e., every orbit giving a tight frame / $(1,1)$-design,
		and for the other every orbit is a $(2,2)$-design.
\item The irreducible subgroups (at least in $2$ dimensions) are those that fix a line.
Does it follow that six equiangular lines are given by the line fixed, or just that this
		is a $(2,2)$-design of six lines?
\end{itemize}

Note that trick of using canonical abstract groups (matrices of determinant $1$) 
does not seem to obviously work over the quaternions, since multiplying matrices by 
scalars in $\HH$ can change the multiplication rules between them (modulo scalars).
Indeed, projective representations over $\HH$ seems to be very hard/interesting --
do they even correspond to finite groups of matrices factored by the scalar subgroup?

Let $H$ be the given faithful representation of $2\cdot A_6$ over the quaternions. 
Taking the orbit of any $v_j$ gives $720$ vectors, as it seems does the orbit
of a generic vector. 
The inner products for this orbit do not seem form anything like a group
(which I didn't expect to happen).
These $720$ vectors give the six lines, so they must be right scalar multiples of
$v_1,\ldots,v_6$. Naively, calculating what these scalars are give $120$ scalars,
which are seen to be form a group, which acts by right multiplication on the $720$
vectors.
According to the list of Stringham \cite{S81} (see also \cite{C80}, \cite{CS03}, \cite{LT09}),
there are two possibilities: 
$$ {\bf D}_{30}=\langle\go_{60},k\rangle \ (\hbox{\tt <120, 26>}), 
\qquad {\bf I}=\langle i,k, {\rho+\gs i-j\over2}\rangle \ (\hbox{\tt <120, 5>}).
$$
Since the group of $120$ scalars has no elements of order $60$
(or $30$ or $20$), 
%The maximal order of an element of ${\bf I}$ is $10$, and the group of $120$ scalars 
%has elements of order $60$, so 
it must be a conjugate of ${\bf I}$. This group {\tt <120, 5>} is variously known as
${\rm SL}(2,5)$, the {\bf binary icosahedral} or {\bf binary dodecahedral group},
the {\bf binary von Dyck group} or {\bf binary triangle group} with parameters $ (p,q,r) = (2,3,5)$,
and the double cover $2\cdot A_5$ of $A_5$.

The projective stabiliser subgroup $H_j$ of $H$ for (the line) $v_j$, % or any of the lines, 
is easily seen to have order $120$,
which is a proper subgroup of maximal order (there are two up to conjugacy, both
the abstract group {\tt <120,5>}, with classes of 
length $6$). % presumably conjugates of each other),
%and it has elements of order $60$ (take the $30$-th powers of its elements).
%The abstract group is the same for both, i.e., {\tt <120,5>}. 
The intersection of any two (different) stabiliser groups $H_j$ is a subgroup of order 
$24$, with all such subgroups being {\tt <24, 3>}, and the intersection of any
three is a cyclic subgroup of order $6$. It is easily seen that the six stabiliser groups
are conjugates (either direct calculation, or at the level of the abstract group).
These are akin to ``parabolic subgroups''. 

The stabiliser subgroups contain no reflections. They contain $20$ elements of order $3$,
which must therefore be half the elements of order $3$ which are not reflections, i.e.,
the other conjugacy class. We need to identify elements of $H$ whose eigenvectors give
a line. I guess the stabiliser of $v_1$ is the easiest to look at.
In $A_6$ there are $40$ elements of order $3$ which do not fix a point (product of two three cycles).These must correspond to our reflections. Either there is reflection given each permutation,
or two reflections giving half the permutations. This should be checkable once you have 
a program to decide if two vectors are (right) scalar multiples of each other.

A reflection would be a matrix with fixes a point on a line.
Its not clear that $H$ is a reflection group (it has no elements that fix a 
given $v_j$).
Calculations show that the complexification of the elements of $H$ have determinant
$1$, as required for $H$ to be a reflection group.

Despite the fact that the conjugate of a diagonal matrix, even a scalar matrix, over 
the quaternions need not be diagonal, I looked for diagonal matrices in $H$.

The subgroup of diagonal matrices in $H$ has order $120$,
which is, for what it is worth, the binary icosahedral group {\tt<120, 5>}, 
and the scalar matrices form
a subgroup of order $24$, which contains elements of orders $1,2,3,4,6,8$.
There are (by Stringham's list as in Cohen) two possible choices:
$$ {\bf D}_6=\langle\go_{12},k\rangle \ (\hbox{\tt <24, 4>}), \qquad
{\bf T}=\langle i,k,{-1+i+j+k\over2} \rangle \ (\hbox{\tt <24, 3>}).
$$
Since ${\bf D}_6$ has elements of order $12$, 
the group of scalars must be (the binary tetrahedral group) ${\bf T}$
({\tt <24, 3>}). By examination, we see this group of $24$ quaternions is a 
subgroup of the $120$ scalars which right multiply the vectors giving the lines.
This has six elements of order $4$, i.e., $\pm i,\pm j,\pm k$.

We find an automorphism which maps these generators to $i,j,k$, and consequently 
transforms the $120$ scalars to Stringham's presentation of the binary icosahedral group.

\subsection{A nonidentity matrix with three eigenvectors}

There are matrices in $H$ with order $3$ corresponding to permutations of
the form $(123)$. These necessarily fix three vectors up to a scalar, and cycle the
other three lines. Let's examine a concrete example, the order three matrix corresponding to $(456)$, i.e.,
$$ g = g_{(456)} = \pmat{-{1\over2}-{\sqrt{3}\over2}i & 0 \cr 0 & -{1\over2}-{\sqrt{3}\over2}i}, $$
With $\gl=\go_3^2=-{1\over2}-{\sqrt{3}\over2}i$, we have
$$ g v_1 = v_1\gl, \quad g v_2 = v_2\gl, \quad g v_3 = v_3\gl, \quad 
g v_4 = v_5\gl, \quad g v_5 = v_6\gl, \quad g v_6 = v_4\gl. $$
Now consider the matrix for $(123)$, i.e.,
$$ g=g_{(123)} = \pmat{-{1\over2}+{\sqrt{3}\over2\sqrt{5}}i & {\sqrt{3}\over2\sqrt{2}}+{3\over2\sqrt{10}}i \cr
-{\sqrt{3}\over2\sqrt{2}}+{3\over2\sqrt{10}}i & -{1\over2}-{\sqrt{3}\over2\sqrt{5}}i }, $$

$$ \gl_4 = -{1\over2}+{\sqrt{15}\over4\sqrt{2}}j+{3\over4\sqrt{2}}k, $$
$$ \gl_5 = -{1\over2}-{\sqrt{6}\over16}(3+\sqrt{5})j+{3\sqrt{2}\over4(\sqrt{5}+1)}k, $$
$$ \gl_6 = -{1\over2}+{\sqrt{6}\over4(3+\sqrt{5})}j -{3(\sqrt{10}+\sqrt{2})\over4} k, $$
$$ gv_4 = v_4\gl_4 $$
$$ g v_1 = v_2(-{\sqrt{5}\over2\sqrt{2}}+{\sqrt{3}\over2\sqrt{2}}i)v_2, \quad
   gv_2 = v_3 ({1\over4}+{\sqrt{15}\over4}i) 
$$

the group of scalars must be $D_6$, which has a unique cyclic subgroup of order $12$.

Moreover, the $24$ scalar matrices belong to the group of $120$ scalar matrices 
which right multiply the lines, i.e., they right and left multiply the set of lines.
Since ${120\over24}=5$, we simply need to add an element of order $5$ to $D_6$ to 
get the irreducible representation of $2\cdot A_6$ on $\HH^2$. 
This can't be a scalar matrix, which would gives the $1$-dimensional irreducible 
representation (consider the action on $e_1$).

or $T=<i,k,{-1+i+j+k\over2}>$ ({\tt <24, 3>}). 
i.e.,

There are two noncyclic According to Stringham \cite{S81} (see \cite{C80}, this subgroup of $\HH^*$ must be

The group is not given by the explicit formulas of ..., and so must be a conjugate of it.
This seems to be the case, since for any two quaternions
$$ \Re(q^{-1} \xi q) = \Re(\xi), $$
and real parts of the subgroup match the formulas of Stringham.

Further, taking a generic orbit seems to give a group of scalars of order $2$,
i.e., $360$ lines, so the six equiangular lines can be viewed as a
``highly symmetric'' configuration. This is not (yet) in the classical sense, since ...

\begin{conjecture} The $720$ vectors in $\HH^2$ giving the six equiangular lines
are the vertices of the 
\end{conjecture}

It would be good to identify the small group number of group of $120$ quaternionic 
scalars. It is {\tt <120,5>}.

(more checking required), so a generic orbit seems to give $30$ lines.
Taking the orbit of $({3\over5}i,{4\over5})$ seems to give a group of scalars of
order $4$, i.e., $180$ lines.

\begin{lemma} The group generated by the units $\go_{12}=e^{2\pi\over12}$, $i$, $k$ and a
	unitary matrix $A\in U_2(\HH)$ is the binary icosahedral group if and only if
	$$ A={\rho+\gs i-j\over 2}I, \ldots   $$
These choices give $1$-dimensional and $2$-dimensional irreducible representations, 
respectively.
\end{lemma}
%for the second ($\ga_2$ needed to be multiplied by $-1$ in the calculation)

Tight frames are a notion of redundant orthonormal bases which is of
both theoretical and practical interest \cite{W18}. Their recent development
has been driven by connections with algebraic combinatorics %(spherical designs)
and applications to quantum physics, signal analysis and engineering.
In all of these settings, tight frames for which the vectors/lines are ``well spread out''
are desired, with equiangular tight frames being of the most interest. 

We consider tight frames over the quaternions, motivated by equiangular
tight frames in $\Rd$ and $\Cd$. Given enough care, much of the theory generalises to quaternionic
Hilbert space $\Hd$, including the variational characterisation, group frames
and $G$-matrices, and the characterisation of projective and unitary equivalence.
We consider in detail how to move between tight frames (and associated linear 
operators) in $\Rd$, $\Cd$ and $\Hd$. 

The maximum possible number of equiangular lines in $\Rd$ is ${1\over2}d(d+1)$,
and in $\Cd$ it is $d^2$. The bound for real equiangular lines is rarely met,
but for complex lines the bound is conjectured to hold in all cases: Zauner's conjecture
on the existence of Weyl-Heisenberg SICs \cite{Zau10}, \cite{ACFW18}. 
For $\Hd$ the bound is $2d^2-d$,
for a maximum of six equiangular lines in $\HH^2$, and fifteen in $\HH^3$. We give an elementary construction
of five equiangular lines in $\HH^2$, and investigate the maximal configuration of
%six equiangular lines in $\HH^2$ recently obtained by \cite{B20}. 
six equiangular lines in $\HH^2$ recently obtained independently by
\cite{K08} and \cite{B20}. Recently, the existence of fifteen equiangular 
lines in $\HH^3$, viewed as a simplex in the projective space $\HH\PP^2$, has been 
proved by \cite{CKM16} using a Newton-Kantorovich theorem.
Based on these two data points, and my instincts (there is a lot of space in $\Hd$ and the 
beauty of the quaternions), I initially thought the quaternionic version of
Zauner's conjecture:
%single data point (and the beauty of the quaternions): 
$$ \hbox{\em There exists $2d^2-d$ equiangular lines in $\Hd$, for each $d$}, $$
should hold. However, calculations of \cite{CKM16}, suggests that this fails for $d=4$,
and the analogous situation for the octonians is much worse. 
Thus, it seems that equiangular lines in $\CC^d$ may be a high point 
for satisfying the estimates on the maximal number of equiangular lines, 
with real and quaternionic equiangular lines 
rarely meeting the bound (``filling up all the space'') due to algebraic limitations 
of the field involved, i.e., $\RR$ not being algebraically closed and $\HH$
not being commutative.
Still, there is much interest in the maximal sets sets of equiangular lines
in $\Hd$, and for those in $\Rd$ (which have been studied for over half a 
century).

%or, at least, alert the reader to the possibility of the existence of ten to fifteen equiangular
%lines in $\HH^3$, and give some hints about this might play out.

%The rest of the paper is set out as follows. 

We now give the basic theory of inner product 
spaces over the quaternions (which are not commutative), to a point where we are able to 
define and discuss tight frames over $\HH$. 
We then develop the theory of tight frames over $\HH$, introducing further properties
of quaternionic spaces,
as required.

\subsection{Inner products over the quaternions}

The reader is assumed to be familiar with the {\bf quaternions} $\HH$ which are an 
extension of the complex numbers $x+iy$ to a noncommutative associative algebra over the real numbers
(skew field) consisting of elements:
$$ q
=q_1+ q_2i +q_3 j +q_4 k 
=(q_1+ q_2i) +(q_3  +q_4 i)j
\in \HH, \qquad q_j\in\RR, $$
with the (noncommutative) multiplication given by Hamilton's famous formula that
$i^2=j^2=k^2=ijk=-1,$
which implies
$$ ij=k, \quad jk=i,\quad ki=j, \qquad ji=-k,\quad kj=-i,\quad ik=-j. $$
Since the multiplication is not commutative, we must distinguish between left and 
right vector spaces (modules) over $\HH$. Since we wish to appropriate much 
of matrix theory, we take our vector spaces to be right $\HH$-vector spaces.
Thus $\HH$-linear maps have the form
$$ L(v_1\ga_1+\cdots+v_n\ga_n) = L(v_1)\ga_1+\cdots+L(v_n)\ga_n, $$
and are represented by matrices, with the usual rules for multiplication,
i.e.,
$$ (AB)_{jk}=\sum_\ell a_{j\ell}b_{\ell k}, $$
where order of multiplication in $a_{j\ell}b_{\ell k}$ cannot be reversed.
For those who may have noticed, I apologise for using $j$ and $k$ above as indices
for matrix entries, and elsewhere as quaternian units (as is often done with 
the complex unit $i$). 

The {\bf conjugate} and {\bf norm} of a quaternion $q=q_1+ q_2i +q_3 j +q_4 k\in\HH$ 
$$ \overline{q}:=q_1-q_2i-q_3j-q_4k, \qquad
 |q|:=\sqrt{q\overline{q}}=\sqrt{q_1^2+q_2^2+q_3^2+q_4^2}, $$
generalise the conjugate and modulus of a complex number $x+iy$, and 
allow the inner product (and associated norm) to be extended to $\HH$ as follows.
We note that
$$ \overline{ab} = \overline{b}\, \overline{a}, \qquad a,b\in\HH. $$
%$$ \overline{(ab)} = (\overline{b})\, \overline{(a)}, \qquad a,b\in\HH. $$

\begin{definition} Let $\cV$ be a finite-dimensional (right) vector space 
over $\FF=\RR,\CC,\HH$. %, which is a right vector space for $\FF=\HH$.
Then an $\FF$-valued map $\inpro{\cdot,\cdot}:\cV\times \cV\to\FF$ is 
called an {\bf inner product} if it satisfies
\begin{enumerate}
\item Conjugate symmetry: $\inpro{v,w}=\overline{\inpro{w,v}}$.
%\item Linearity in the first variable:
%$\inpro{v+w,u}=\inpro{v,u}+\inpro{w,u}$,
%$\inpro{v\ga,w}=\inpro{v,w}\ga$,
\item Linearity in the first variable:
$\inpro{v\ga,w}=\inpro{v,w}\ga$,
\\
\hbox{\hskip5.5truecm
$\inpro{v+w,u}=\inpro{v,u}+\inpro{w,u}$, 
}
\item Positive definiteness: $\inpro{v,v}>0$, $v\ne 0$.
\end{enumerate}
%$\inpro{v,v}\ge0$, with $\inpro{v,v}=0$ if and only if $v=0$.
%$\inpro{v,v}\ge0$, with $\inpro{v,v}=0$ if and only if $v=0$.
%\item $\inpro{v\ga,w}=\inpro{v,w}\ga$ (linearity in first variable) \qquad
%\hbox{(linearity in the first variable)}, $$
for all vectors $v,w,u\in \cV$ and scalars $\ga\in\FF$.
\end{definition}

\noindent
We will say that $\cV$ is a real, complex or quaternionic inner product space 
(respectively).
The theory of inner product spaces evolves as in the real and complex cases, 
though it is not well known, e.g., the Cauchy-Schwarz inequality
$$ |\inpro{v,w}|\le\norm{v}\norm{w}, \qquad \norm{v}:=\sqrt{\inpro{v,v}}, $$
holds (with equality if and only if $v$ and $w$ are linearly dependent), 
though it is not mentioned in the monograph \cite{R14}. I think this is in part 
due to the fact that real and complex inner products are often also defined on 
$\HH$-vector spaces. A good treatment is given in \cite{C80} (``unitary inner products'') 
and  \cite{GMP13} (``Hermitian quaternionic scalar products'', which includes Cauchy-Schwarz). 
The prototype of such an inner product is the {\bf Euclidean}
(or {\bf standard}) inner product
%$$ \inpro{v,w}:=\sum_j \overline{w_j}v_j, \qquad v,w\in\Hd. $$
$$ \inpro{v,w}:=\sum_j \overline{v_j}w_j, \qquad v,w\in\Hd. $$
Throughout, we will use the notation $\inpro{v,w}$ for the Euclidean inner product,
sometimes writing $\inpro{v,w}_\FF$ to emphasize when 
all the entries of vectors $v$ and $w$ 
are in $\FF=\RR,\CC,\HH$.
The Euclidean inner product on the entries of a matrix is
the {\bf Frobenius inner product}
$$ \inpro{A,B}_F:=\trace(A^*B), \qquad \norm{A}_F^2 
=\inpro{A,A}_F=\sum_j\sum_k |a_{jk}|^2. $$
In light of the noncommutativity of the quaternians, we note that scalars come
outside an inner product (as we have defined it) as follows
$$ \inpro{v\ga, w\gb} = \overline{\ga}\inpro{v,w}\gb. $$
The notion of orthogonality, and the Gram-Schmidt process extends in the obvious fashion.
The Riesz representation also extends to inner products over $\HH$, and so the 
{\bf adjoint} of a linear map $T:\cV\to \cW$ between finite-dimensional inner product spaces
can be defined as the unique linear map $T^*:\cW\to \cV$ satisfying
%$$ \inpro{Tv,w}=\inpro{v,T^*w}, \qquad \forall v\in \cV, \ w\in \cW. $$
$$ \inpro{T^*w,v}=\inpro{w,Tv}, \qquad \forall v\in \cV, \ w\in \cW. $$
If $T$ and $T^*$ are represented as matrices $[T]$ and $[T^*]$ with respect to orthonormal bases 
$(v_j)$ and $(w_k)$, so that
$v=\sum_j v_j\inpro{v,v_j}$, $\forall v\in \cV$, 
and $w=\sum_k w_k\inpro{w,w_k}$, $\forall w\in \cW$, 
then 
$$ [T]_{jk} 
= \inpro{ T v_k, w_j}
= \inpro{ v_k, T^* w_j}
= \overline{\inpro{ T^* w_j,v_k}} 
= \overline{ [T^*]_{kj}}, $$
and hence the matrix $[T^*]$ is the conjugate transpose of the matrix $[T]$.
For this reason, it is often assumed that the inner product is the standard inner product on $\Hd$,
and all calculations are done with matrices, with $A^*$ defined to be
the conjugate transpose (or {\bf Hermitian transpose}) of the matrix $A$, 
as is the case in \cite{R14}.
The adjoint (or Hermitian transpose) satisfies some (but not all) of the usual properties, including
$$ (AB)^*=B^*A^*, \qquad 
(A+B)^*=A^*+B^*, \qquad (A^*)^*=A, $$
$$ (A^*)^{-1}=(A^{-1})^*\quad \hbox{(for $A$ invertible)}.$$
%We have not yet needed the inverse of a matrix (or linear map) over $\HH$.
It can be shown that if $AB=I$ (for matrices over $\HH$) then
$BA=I$, and so a right inverse exists for $A$ if and only if a left inverse exists, 
and these inverses are equal (and denoted by $A^{-1}$).

One subtle point, which is not obvious from the matrix formulation, is that 
scalar multiplication by $\gb\in\HH\setminus\RR$, i.e., $R_\gb:\cV\to \cV:v\mapsto v\gb$ is not an
$\HH$-linear map, since
$$ R_\gb (v\ga) = (v\ga)\gb
= v(\ga\gb)\ne v(\gb\ga)
=(v\gb)\ga=(R_\gb v)\ga \qquad
\hbox{(in general)}. $$
Left multiplication of $\Hd$ by $\gb$ defines an $\HH$-linear map $L_\gb:\Hd\to\Hd:v\mapsto \gb v$,
but this is dependent on a choice of basis: it is the linear map which maps
$e_j\mapsto e_j\gb$, i.e., the linear map whose matrix representation
with respect to the standard basis $(e_j)$ is $\gb I$ 
(see the discussion of \cite{GMP13} \S 3.1).
On the other hand,
multiplication of a fixed vector $v\in \cV$ by scalars, 
i.e.,  $[v]:\HH\to \cV:\ga\mapsto v\ga$ 
is an $\HH$-linear map:
$$ [v](\gb_1\ga_1+\gb_2\ga_2)
= v (\gb_1\ga_1+\gb_2\ga_2)
= (v\gb_1)\ga_1+(v \gb_2)\ga_2
= ([v]\gb_1)\ga_1+([v]\gb_2)\ga_2. $$
Its adjoint $[v]^*:\cV\to\HH$ is given by $[v]^*=\inpro{\cdot,v}$, since
$$ \inpro{[v]^*w,\ga} = \inpro{w,[v]\ga}
=\inpro{w,v\ga}
=\overline{\ga}\inpro{w,v}
= \inpro{\ga,\inpro{w,v}}. $$
The map $[v]$ is sometimes abbreviated simply as $v$, especially 
when $v\in\Hd$ is thought of as a column vector, i.e., 
as an element of $\HH^{d\times 1}$.
More generally, a {\bf synthesis map}
$$ V=[v_1,\ldots,v_n]:\Hn\to\cV:a\mapsto v_1a_1+\cdots+v_n a_n, $$
for a sequence of vectors $v_1,\ldots,v_n\in\cV$, has adjoint 
the {\bf analysis map}
$$ V^*:\cV\to\Hn: v\mapsto(\inpro{v,v_j})_{j=1}^n. $$

\section{Tight frames}

A {\bf frame} for a Hilbert space $\cH$ is a sequence of vectors $(v_j)$ 
satisfying the 
%(somewhat cryptic, but easier to verify in the infinite-dimensional setting) 
condition
\begin{equation}
\label{framedefn}
A\norm{v}^2\le\sum_j|\inpro{v,v_j}|^2\le B\norm{v}^2, \qquad\forall v\in\cH,
\end{equation}
where $A,B>0$ are constants, with the case $A=B$ giving a 
{\bf tight frame}.
From this, a ``frame expansion'' follows, which for tight frames takes the particularly simple form
$$  v= {1\over A}\sum_j v_j \inpro{v,v_j} \qquad\forall v\in\cH.  $$
A prominent early example of the use of such ``generalised orthonormal bases'' is
in the theory of wavelets. Recently, frames have been considered for quaternionic
Hilbert space, see, e.g., \cite{KTS17} and \cite{VSS20}. These papers deal primarily
with the the frame operator and the construction of dual frames. Here we consider
tight frames (where the dual frame is the frame itself) with a particular emphasis
on the classification and construction of such frames. This is related to earlier 
work of Hoggar \cite{H76}, \cite{H82} and others, which implicitly considers tight frames 
over quaternionic (and even octonionic) Hilbert spaces.

\subsection{Tight frames defined and unitary equivalence}

We will say that a sequence of vectors with synthesis map $V=[v_1,\ldots,v_n]$ 
is a {\bf tight frame} for a (finite-dimensional) quaternionic Hilbert space
$\cH$ if it satisfies (\ref{framedefn}), where $A=B$. It is said to be {\bf 
normalised} if $A=1$, which can be achieved by multiplying the vectors by 
a suitable positive scalar.  
The {\bf frame operator} (for a sequence of vectors) is $S=VV^*$ and
the {\bf Gramian} (matrix) is $G=V^*V$. 

The monograph \cite{W18} is a good reference for those parts of
the theory of finite tight frames which we now develop.
First we consider equivalent conditions for being a tight frame.
For this, we need the polarisation identity for quaternionic Hilbert space.
%Since this is not well known (it is not in \cite{R14}), we provide it with proof.
Since this is not well known, we provide it with proof.

\begin{lemma}
\label{polarisationidlemma}
(Polarisation identity) For an inner product space over
$\FF=\RR,\CC,\HH$,
%$$ \inpro{v,w}:=\sum_j \overline{w_j}v_j, $$
we have
	$$ \inpro{v,w} = {1\over 4} \sum_{r=0}^{m-1} \Bigl(\norm{v+wi_r}^2-\norm{v-wi_r}^2\Bigr)i_r, $$
where $m=\dim_\RR(\FF)$, $(i_0,i_1,i_2,i_3)=(1,i,j,k)$,
and $\inpro{\cdot,\cdot}$ is linear in the first variable.
\end{lemma}

\begin{proof}
We first observe %(by a calculation) 
that for a quaternion
$q =q_0+q_1i_1+q_2i_2+q_3i_3$, $q_r\in\RR$, a calculation gives
\begin{equation}
\label{qrpart}
\overline{i_r}q+\overline{q}i_r = 2 q_r, \qquad r=0,1,2,3,
\end{equation}
and we write $(q)_r=q_r$. Expanding, using the properties of the inner product, gives
\begin{align*} \norm{v\pm w i_r}^2
&=\inpro{v,v}+\inpro{\pm wi_r,\pm wi_r}
+\inpro{v,\pm wi_r} +\inpro{\pm wi_r,v} \cr
&=\norm{v}^2+\norm{w}^2
\pm\overline{i_r} \inpro{v, w} \pm \inpro{w,v} i_r,
\end{align*}
so that
$$ \norm{v+w i_r}^2-\norm{v-w i_r}^2
= 2\bigl(\overline{i_r} \inpro{v, w} + \overline{\inpro{v,w}} i_r\bigr)
= 4(\inpro{v,w})_r, $$
which gives the result.
\end{proof}

This could also be proved by rewriting equation (3.5.1) of \cite{R14}
for $A=I$ and $(q_1,q_2,q_3)=(i,j,k)$.

\begin{proposition}
\label{tightframeequidefs}
Let $V=[v_1,\ldots,v_n]$ be sequence of vectors in a $d$-dimensional
(right) quaternionic Hilbert space $\cH$, such as $\Hd$. Then the following are
equivalent
\begin{enumerate}[\rm(i)]
\item $V$ is a normalised tight frame for $\cH$, i.e.,
$$ \norm{v}^2= \sum_j |\inpro{v,v_j}|^2, \qquad\forall v\in\cH. $$
\item The frame operator $S=VV^*=I$, i.e., we have the frame expansion
$$ v = \sum_j v_j \inpro{v,v_j}, \qquad\forall v\in\cH. $$
\item The Plancherel identity
$$ \inpro{v,w} = \sum_j \inpro{v_j,w} \inpro{v,v_j}, \qquad\forall v,w\in\cH. $$
\item The Gramian $P=V^*V$ is a rank $d$ orthogonal projection, i.e., $P^2=P$, $P^*=P$.
\end{enumerate}
\end{proposition}

\begin{proof} The implications (ii)$\implies$(iii)$\implies$(i) follow by taking 
the inner product with $w$ and then letting $w=v$, respectively. Suppose that (i) holds.
%Then b
By Lemma \ref{polarisationidlemma},
we have
\begin{align*}
4(\inpro{v,w})_r
&= \norm{v+wi_r}^2-\norm{v-wi_r}^2
= \sum_j\Bigl( |\inpro{v+wi_r,v_j}|^2 -|\inpro{v-wi_r,v_j}|^2\Bigr) \cr
%Now
%\begin{align*}
%|\inpro{v\pm wi_r,f_j}|^2 
%&= \bigl(\overline{\inpro{v,f_j}}\pm\overline{i_r}\overline{\inpro{w,f_j}} \bigr)
%\bigl( \inpro{v,f_j}\pm\inpro{w,f_j}i_r \bigr) \cr
%&= |\inpro{v,f_j}|^2+|\inpro{w,f_j}|^2
%\pm\overline{i_r}\inpro{f_j,w}\inpro{v,f_j}
%\pm \inpro{f_j,v}\inpro{w,f_j}i_r.
%\end{align*}
%Since
%$\overline{\inpro{f_j,w}\inpro{v,f_j}} =\overline{\inpro{v,f_j}}
%\overline{\inpro{f_j,w}} =\inpro{f_j,v} \inpro{w,f_j}$, 
%(\ref{qrpart}) gives
%$$ |\inpro{v+wi_r,f_j}|^2 -|\inpro{v-wi_r,f_j}|^2
&= \sum_j \Bigl( 2\overline{i_r}\inpro{v_j,w}\inpro{v,v_j}
+2\inpro{v_j,v}\inpro{w,v_j}i_r\Bigr)
= 4 \sum_j(\inpro{v_j,w}\inpro{v,v_j})_r.
\end{align*}
%Thus (by the Riesz representation)
Thus (by the Riesz representation, or since the orthogonal complement of $\Hd$ is $\{0\}$)
$$ \inpro{v,w}
=\sum_j \inpro{f_j,w}\inpro{v,f_j}
= \inpro{ \sum_j f_j \inpro{v,f_j} ,w} \Implies
 v = \sum_j f_j \inpro{v,f_j}, $$
which is (ii).

We now show
% For the implications
 (iii)$\iff$(iv).
We observe that by construction $P=(\inpro{v_k,v_j})_{j,k}$ is Hermitian. % and of rank $d$.
The condition $P^2=P$ can be written entrywise as
$$ \inpro{v_k,v_j}=P_{jk}=\sum_\ell P_{j\ell}P_{\ell k}
=\sum_\ell \inpro{v_\ell,v_j}\inpro{v_k,v_\ell}, $$
which is the Plancherel identity for $v=v_k$ and $w=v_j$. The implications
then follow by extending the Plancherel identity (using linearity and symmetry
of the inner product), and 
calculating
%determining the rank of $P$ via
$\rank(P)=\trace(P)=\Re(\trace(VV^*))=d$, by (ii).
\end{proof}

%\begin{proposition}
%\label{PGramianchar}
%A sequence of vectors $V=[v_1,\ldots,v_n]$ in $\Fd$ is a tight frame for $\Fd$
%if and only if its scaled Gramian $P={1\over A}V^*V$, $dA:=\sum_j\norm{v_j}^2$
%is an orthogonal
%projection matrix, i.e., 
%$$ (V^*V)^2=A\, V^*V. $$
%\end{proposition}

A linear map $U$ is unitary if it preserves angles,
i.e., $\inpro{Uv,Uw}=\inpro{v,w}$, $\forall v,w$, or, equivalently 
$U^*U=I$. Unitary maps can be defined in the same way on quaternionic
Hilbert spaces. If $V=[v_1,\ldots,v_n]$ is a tight frame for a 
quaternionic Hilbert space, then so is any unitary image
$UV=[Uv_1,\ldots,Uv_n]$, and these frames have the same
Gramian since $(UV)^*UV=V^*U^*UV=V^*V$, and we say that they are 
{\bf unitarily equivalent}. Tight frames are studied up to unitary equivalence
(which is an equivalence relation) and multiplication by a nonzero scalar.

For ease of presentation, we will now consider $\Hd$,
rather than saying let $\cH$ be a quaternionic Hilbert space
of dimension $d$. 
We also write $\Fd$, with $\FF=\RR,\CC,\HH$.
The following characterisation extends the real and complex cases (see \cite{W18}
Theorem 2.1).

\begin{proposition} An $n\times n$ matrix $P$ is the Gramian matrix of a normalised tight
frame $V=[v_1,\ldots,v_n]$ for $\Hd$ if and only if it is an orthogonal 
projection matrix of rank $d$.
\end{proposition}

\begin{proof}
We have already seen that a normalised tight frame is determined by its Gramian,
which is an orthogonal projection of rank $d$ (Proposition \ref{tightframeequidefs}).
It remains only to show that such a matrix $P$ corresponds to a normalised tight 
frame. Let $v_j=Pe_j$. Then with the Euclidean norm on $\Hn$, we have that
$$ \inpro{v_k,v_j} = \inpro{Pe_k,Pe_k}
=\inpro{e_k,Pe_j}=\overline{P_{kj}}=P_{jk}, $$
so that $(v_j)$ is such a tight frame (for its $d$-dimensional span).
\end{proof}

A finite sequence of unit vectors $(v_j)$ (or the lines they represent) are said 
to be {\bf equiangular} if 
\begin{equation}
\label{equiangulardefn}
|\inpro{v_j,v_k}|^2=\gl=c^2=\cos\gth, \qquad\forall j\ne k.
\end{equation}
The constants $\gl$, $c$ and $\gth$ all occur in the literature, 
%and we will refer to them as the (common) angle. 
and are called the (common) angle. 

%$$ \overline{i}q+\overline{q}i = \overline{q}i-iq
%= (a-bi-cj-dk)i -i(a+bi+cj+dk) = (ai+b+ck-dj) +(-ai +b -ck+dj) = 2b. $$
%$$ \overline{j}q+\overline{q}j = \overline{q}j-jq
%= (a-bi-cj-dk)j -j(a+bi+cj+dk) = (aj-bk+c+di) +(-aj +bk +ck-di) = 2b. $$

\begin{example}
\label{HoggarlinesinC^2}
Four equiangular lines in $\HH^2$ 
with $\gl={1\over3}$
%at angle $\gl={1\over3}$
are given in \cite{H76}, namely
\begin{align*}
w_1 &= {1\over\sqrt{2}}\pmat{1\cr j},\quad 
w_2 = {1\over\sqrt{6}}\pmat{1-\sqrt{2}i\cr j-\sqrt{2}k}, \cr
w_3 &= {1\over2\sqrt{3}} \pmat{\sqrt{2}+\sqrt{3}+i\cr\sqrt{2}j-\sqrt{3}j+k}, \quad
w_4 = {1\over2\sqrt{3}} \pmat{\sqrt{2}-\sqrt{3}+i\cr\sqrt{2}j+\sqrt{3}j+k}.
\end{align*}
The Gramian of these vectors (which are a tight frame for $\HH^2$) 
has only complex entries, and so they are unitarily equivalent 
to an equiangular tight frame for $\CC^2$.
They have the same Gramian as the Weyl-Heisenberg SIC 
$v_1=v$, $v_2=Sv$, $v_3=\gO v$, $v_4=iS\gO v$, where
$$ v={1\over\sqrt{6}}\pmat{ \sqrt{3+\sqrt{3}}\cr 
%e^{{\pi\over 4} i} 
{1\over\sqrt{2}}(1+i)
\sqrt{3-\sqrt{3}} },  \quad
S=\pmat{0&1\cr1&0}, \ \gO=\pmat{1&0\cr0&-1}. $$
Therefore, there is a unitary map $U$ with $v_j=Uw_j$, which we calculate as
%The $m$-products are all complex numbers, which suggests this could lie
%in $\CC^2$. We seek a unitary map $U$ with $v_j:=(Uw_j)\ga_j$. 
%We find $v_1=Uw_1$, $v_2=Uw_2$, $v_3=Uw_3$, $v_4=-iUw_4$, where
$$ U=\pmat{z_1&-jz_1\cr z_2&-kz_2},
\quad
z_1:=
{\sqrt{3+\sqrt{3}}\over2\sqrt{3}}
+{\sqrt{3-\sqrt{3}}\over2\sqrt{3}}i, \quad 
z_2:={\sqrt{3+\sqrt{6}}\over2\sqrt{3}}
-{\sqrt{3-\sqrt{6}}\over2\sqrt{3}}i.
$$
%$$ \inpro{v_1,v_2}
%=\inpro{(Uw_1)\ga_1,(Uw_2)\ga_2} 
%=\overline{\ga_2} \inpro{w_1,w_2} \ga_1
%\Implies \ga_1=1, \quad \ga_2=\inpro{w_1,w_2}\inpro{v_1,v_2}^{-1}, $$
%$$ U[w_1\ga_1,w_2\ga_2]=[v_1,v_2] \Implies
%U=[v_1,v_2] [w_1\ga_1,w_2\ga_2]^{-1}. $$
\end{example}

Though this first example of quaternionic equiangular lines are not
``quaternionic'', we will see that such lines do exist, and are very intriguing.

\subsection{The variational characterisation of tight frames}

We now seek to extend the variational characterisation 
for tight frames \cite{BF03}, \cite{W03}.
This is most easily  %and transparently
proved over $\CC$ from the spectral decomposition of the frame operator using the formula $\trace(AB)=\trace(BA)$ (see \cite{W18}, Theorem 6.1).
This trace formula no longer holds over the quaternions, even for $1\times 1$ matrices. 
Instead, we will use the fact 
%that if $U$ is unitary and $\gL$ is a real diagonal matrix, then 
%$$  \trace(U^*\gL U)=\trace(\gL), $$
\begin{equation}
\label{Htraceformulareal}
\Re(\trace(AB))=\Re(\trace(BA)),
\end{equation}
which follows from the special case 
%$$ \Re(ab)=\Re(ba), \qquad\forall a,b\in\HH. $$
$\Re(ab)=\Re(ba)$, $\forall a,b\in\HH$. 

The general spectral theory of matrices over $\HH$ is fraught (see \cite{R14}), since
%The spectral theory is complicated by the fact that if 
$$ A v = v\gl \Implies A(v\ga)=(v\ga) \ga^{-1}\gl\ga, $$
so that if $v$ is a (right) eigenvector for $\gl$, then $v\ga$ is an eigenvector for eigenvalue $\ga^{-1}\gl\ga$.
However, {\bf Hermitian matrices} (those with $A^*=A$), have real eigenvalues and 
are unitarily diagonalisable, as in the complex case.

%\begin{equation}
%\label{Htraceformuls}
%\trace(U^*\gL U)=\trace(\gL), \qquad 
%\hbox{for $U$ unitary and $\gL$ real diagonal}. 
%\end{equation}
%to give a modified argument which also works over the quaternions.

\begin{lemma} Let $V=[v_1,\ldots,v_n]$ be %a sequence of 
vectors in $\Fd$, with 
frame operator $S=VV^*$ and Gramian $G=V^*V$. Then $\trace(S^k)=\trace(G^k)$,
$k=1,2,\ldots$. In particular,
%$$ S=VV^* \quad\hbox{(frame operator)}, \qquad
%G=V^*V, \quad {(Gramian)}, \qquad V=[v_1,\ldots,v_n]. $$
\begin{equation}
\label{traceSandSsquared}
\trace(S) %= \norm{S^{1\over2}}_F^2 
= \sum_{j} \norm{v_j}^2, 
\qquad \trace(S^2)  %= \norm{S}_F^2 
= \sum_{j}\sum_{k} |\inpro{v_j,v_k}|^2.
\end{equation}
\end{lemma}

\begin{proof} The trace of an Hermitian matrix $A$ is real,
since $\overline{\inpro{Ax,x}}=\inpro{x,Ax}=\inpro{Ax,x}$.
Since $S^k$ and $G^k$ are Hermitian, they have real trace, and so
by (\ref{Htraceformulareal}), we have
\begin{align*}
\trace(S^k) 
&= \Re(\trace(VV^*(VV^*)^{k-1}))
= \Re(\trace(V^*(VV^*)^{k-1}V)) \cr
&= \Re(\trace((V^*V)^k))
= \trace(G^k).
\end{align*}
%We first prove (\ref{Htraceformuls}), by direct computation
%\begin{align*}
%\trace(U^*\gL U)
%&= \sum_j \sum_{r,s} (U^*)_{jr}(\gL)_{rs}(U)_{sj}
%= \sum_j \sum_{r,s} \overline{u_{rj}} (\gL)_{rs} u_{sj} \cr
%&= \sum_{r,s} (\gL)_{rs} \sum_j \overline{u_{rj}} u_{sj} 
%= \sum_{r,s} (\gL)_{rs} \gd_{rs}
%= \sum_{r} (\gL)_{rr} 
%= \trace(\gL). 
%\end{align*}
%Let $V=U_1\Sigma U_2^*$ be a singular value decomposition 
%for $V$ (see \cite{R14}, Proposition 3.2.5). 
%The singular values of $V$ are the nonzero diagonal entries
%of $\Sigma$. 
%Since $U_1$ and $U_2$ are unitary, we have
%$$ S^k=(VV^*)^k=(U_1\Sigma U_2^* U_2 \Sigma^T U_1^*)^k=U_1(\Sigma\Sigma^T)^k U_1^*, $$
%$$ G^k=(V^*V)^k
%= ( U_2 \Sigma^T U_1^* U_1\Sigma U_2^* )^k
%= U_2 (\Sigma^T\Sigma)^k U_2^*, $$
%where $\Sigma\Sigma^T$ and $\Sigma^T\Sigma$ are real diagonal, 
%with the same trace. 
%Hence, 
%by (\ref{Htraceformuls}),
%$$ \trace(S^k)
%=\trace(U_1(\Sigma\Sigma^T)^k U_1^*)
%=\trace\bigl((\Sigma\Sigma^T)^k\bigr)
%=\trace\bigl((\Sigma^T\Sigma)^k\bigr)
%=\trace(U_2(\Sigma^T\Sigma)^k U_2^*)
%=\trace(G^k). $$
The formulas for $\trace(G)$ and $\trace(G^2)$ given on the 
left hand side of 
(\ref{traceSandSsquared}) 
are easily calculated
from $(G)_{jk}=\inpro{v_k,v_j}$. 
\end{proof}

\begin{theorem}
\label{generalisedWelchbound}
%(variational characterisation)
Let $v_1,\ldots,v_n$ be vectors in $\Fd$,
which are 
not all zero.  %, and $d=\dim(\cH)$. 
Then
\begin{equation}
\label{genWelchbd}
\sum_{j=1}^n \sum_{k=1}^n |\inpro{v_j,v_k}|^2 \ge {1\over d}
\Bigl(\sum_{j=1}^n \norm{v_j}^2\Bigr)^2,
%\Bigl(\sum_{j=1}^n \inpro{f_j,f_j}\Bigr)^2, 
\end{equation}
with equality if and only if $(v_j)_{j=1}^n$ is a tight frame for $\Fd$.
\end{theorem}

\begin{proof}
Let $V=[v_j]$. 
%$$ \trace(S)
%=\sum_{k=1}^d \sum_{j=1}^n (V)_{kj}(V^*)_{jk}
%=\sum_{k=1}^d \sum_{j=1}^n v_{kj}\overline{v_{kj}}
%= \sum_{j=1}^n \sum_{k=1}^d |v_{kj}|^2
%=\sum_{j=1}^n \norm{v_j}^2. $$
Since $S=VV^*$ is positive definite,
%strictly posiis self adjoint, 
it is unitarily diagonalisable $S=U\gL U^*$, $\gL=\diag(\gl_j)$,
with real 
eigenvalues $\gl_1,\ldots,\gl_d\ge0$.
From (\ref{Htraceformulareal}), 
we have 
$$\trace(S^k)
=\Re(\trace(U\gL^k U^*))
=\Re(\trace(\gL^k U^*U))
=\Re(\trace(\gL^k))
=\trace(\gL^k). $$
Thus, the Cauchy-Schwarz inequality gives
$$ \trace(S)^2 = (\sum_j\gl_j)^2
= \inpro{(1),(\gl_j)}^2
\le \norm{(1)}^2\norm{(\gl_j)}^2
= d \sum_j \gl_j^2 = d\trace(S^2),$$
which, % by (\ref{Strace}) 
by (\ref{traceSandSsquared}),
is (\ref{genWelchbd}),
with equality if and only if $\gl_j=A$, $\forall j$, $A>0$, i.e.,
$$ S=U(AI)U^*=AI  \Iff \hbox{$(v_j)$ is a tight frame for $\Fd$.}$$
Note above, since one vector is nonzero, $S=\sum_j v_jv_j^*\ne0$, and so $A\ne0$.
%Note above, since one of the vectors $(v_j)$ is nonzero, $S\ne0$, and so $A\ne0$.
\end{proof}

This variational characterisation of tight frames depends only on the 
Gramian, and hence the frame up to unitary equivalence. It is easy to
verify, and plays a key role in Theorems \ref{tightframesRtoC}
and \ref{tightframesCtoH}. We now consider its implications for
equiangular lines.

%In addition to being simpler to verify than the definition of a tight frame,
%(\ref{genWelchbd})
%can be used to find tight frames, e.g., a tight frame of $n$ unit vectors
%can be found by minimising the left hand side (to ${n^2\over d}$).

\subsection{Bounds on equiangular lines}

We recall that unit vectors $(v_j)$ in $\Fd$ are
equiangular if they satisfy (\ref{equiangulardefn}), i.e.,
$$ |\inpro{v_j,v_k}|^2=\gl=c^2=(\cos\gth)^2,
\qquad\forall j\ne k. $$
Those of the most interest have the maximum separation of the corresponding
lines, i.e., $\gl=c^2$ {\em small}, or, equivalently, $0\le\gth\le{\pi\over2}$ {\em large}.
Examples that exist in every dimension $d$ are orthonormal bases
of $n=d$ vectors ($\gl=0$, $\gth=90^\circ$) and the $n=d+1$ vertices
of a regular simplex ($\gl={1\over d^2}$). As an example of Theorem
\ref{generalisedWelchbound}, we have the following bound.

\begin{example}
If all the $n$ vectors $(v_j)$ in $\Fd$ have unit norm, then
(\ref{genWelchbd})
reduces to
$$\sum_{j=1}^n \sum_{k=1}^n |\inpro{v_j,v_k}|^2 \ge {1\over d}
\Bigl(\sum_{j=1}^n 1^2\Bigr)^2={n^2\over d}. $$
Moreover, if the $(v_j)$ are equiangular, then the left hand side is
$(n^2-n)\gl+n$, and the inequality rearranges to
\begin{equation}
\label{lamdalinesboung}
\gl \ge {n-d\over d(n-1)}, 
\end{equation}
with equality (and maximum possible separation) when the vectors are a tight frame,
and for $\gl<{1\over d}$ it rearranges to the {\bf relative bound} for 
equiangular lines
$$ n \le {1-\gl\over {1\over d}-\gl}, \qquad \gl<{1\over d}. $$
\end{example}

The next bound (which is well known for $\FF=\RR,\CC$) depends
on the underlying field.

%The equiangularity condition on a set of lines ensures
%that the corresponding set
%of one-dimensional orthogonal projections is linearly independent over $\RR$,
%which gives a bound on the possible number of equiangular lines in $\Fd$
%that depends on $\FF$.

\begin{theorem}
\label{linearindepth}
Suppose $d>1$. Let $(v_j)$ be a sequence of $n$ non-parallel unit vectors in $\Fd$
%, $d>1$, 
giving a set of $n$ equiangular lines, then the orthogonal projections
$$ P_j =v_j v_j^* : v\mapsto v_j\inpro{v,v_j}, \qquad j=1,\ldots,n, $$
are linearly independent over $\RR$, and hence
\begin{equation}
\label{nequiangbound}
n \le 
\begin{cases}
 {1\over2}d(d+1), & \FF=\RR; \\
 d^2, & \FF=\CC; \\
 2d^2-d, & \FF=\HH,
\end{cases}
\end{equation}
with equality if and only if $(P_j)$ is a basis for the
$\RR$-vector space of Hermitian matrices. In these cases, the angle is 
\begin{equation}
\label{lambdamaxight}
\gl =
\begin{cases}
 {1\over d+2}, & \FF=\RR; \\
 {1\over d+1}, & \FF=\CC; \\
 {1\over d+{1\over2}}, & \FF=\HH.
\end{cases}
\end{equation}
\end{theorem}

\begin{proof} Since $d>1$, the equiangularity constant $\gl$ is less than $1$.
Using (\ref{Htraceformulareal}), we calculate
\begin{align*}
\Re(\trace(P_jP_k))
&=\Re(\trace(v_jv_j^*v_kv_k^*)) \cr
&=\Re(\trace(v_j^*v_kv_k^*v_j))
=|\inpro{v_j,v_k}|^2=\gl, \qquad j\ne k.
\end{align*}
The $\RR$-linear combination $\sum_j c_j P_j$ is Hermitian, and hence 
its Frobenius norm satisfies
\begin{align*}
\norm{\sum_j c_j P_j}_F^2
&= \Re(\trace(\sum_j c_j P_j \sum_k c_kP_k))
= \sum_j\sum_k c_j {c_k} \Re(\trace(P_jP_k)) \\
&= \sum_j\sum_k c_j {c_k} \gl + \sum_j c_j {c_j} (1-\gl)
%&= \gl \Bigl|\sum_j c_j\Bigr|^2 + (1-\gl) \sum_j |c_j|^2, \cr
= \gl \Bigl(\sum_j c_j\Bigr)^2 + (1-\gl) \sum_j c_j^2,
\end{align*}
which is zero only for the trivial linear combination.

The $n$ projections $\{P_j\}$ belong to
the real vector space of $d\times d$ Hermitian matrices
which has dimension 
given by the right hand side of (\ref{nequiangbound}).
For example, for $\FF=\HH$ the Hermitian matrices are 
determined by their real diagonal, and the entries above
it which can be any quaternions, giving a dimension
of  $d+{1\over2}(d^2-d)\cdot 4= 2d^2-d$.
\end{proof}

This result for $\Hd$, the inequality
(\ref{nequiangbound}), is given in \cite{H76b}, without proof,
and as Proposition 2.2 in \cite{CKM16} (which also includes
the octonionic case $\OO^3$).

%This result for $\Hd$, the inequality
%(\ref{nequiangbound}), is given in \cite{H76b}, without proof,
%and does not seem to be widely known.

We are now in position to discuss quaternionic equiangular lines.
We first observe:
\begin{itemize}
\item Quaternionic equiangular lines do exist (for $\gl<1$, $d>1$). 
\end{itemize}
%{\em they do exist} (for $\gl<1$). 
You will recall from Example \ref{HoggarlinesinC^2}
that Hoggar's example of four equiangular
lines in $\HH^2$ were in fact lines in $\CC^2$ (most likely
the very first occurrence of a SIC in the literature).
For $d=1$, any sequence of unit quaternions is an equiangular tight frame
(with $\gl=1$), which is quaternionic if any ratio of the quaternions
is not a complex number. Even though this is a trivial example, we will
be able to use such frames to construct unit-norm tight frames in $\CC^2$ and $\RR^4$
%(see Section \ref{RtoCtoH}). 
(Example \ref{quaternionsgroupframe}).
We now give a simple example in $\HH^2$.

\begin{example} (Five equiangular lines in $\HH^2$).
Fix $0<t<1$, and consider the four unit vectors
$$ v_r=\pmat{t\cr\sqrt{1-t^2}\, i_r}, \qquad i_1=1, \quad i_2=i, \quad i_3=j,\quad i_4=k. $$
%$$ v_r=\pmat{t\cr\sqrt{1-t^2}i_r}, \qquad r=0,1,2,3. $$
These are equiangular, with
$$ |\inpro{v_r,v_s}|^2=\gl:=t^4+(1-t^2)^2, \qquad j\ne k, $$
where ${1\over2}\le\gl<1$. 
By Theorem \ref{linearindepth},
the maximal number of equiangular lines in 
$\CC^2$ is four, with $\gl={1\over 3}$, so these lines are quaternionic.
For the maximal separation $\gl={1\over2}$, we may add a fifth equiangular line,
to obtain five equiangular lines in $\HH^2$ given by
$$ {1\over\sqrt{2}}\pmat{1\cr 1}, \quad
{1\over\sqrt{2}}\pmat{1\cr i}, \quad
{1\over\sqrt{2}}\pmat{1\cr j}, \quad
{1\over\sqrt{2}}\pmat{1\cr k}, \quad 
\pmat{1\cr 0} \ \bigl(\hbox{or }
\pmat{0\cr 1}\bigr). $$
These lines are not tight, since they do not give equality in 
(\ref{lamdalinesboung}), i.e.,
$$ \gl={1\over 2}>{3\over 8}={5-2\over2(5-1)} = {n-d\over d(n-1)}. $$
They appear exactly as above in \cite{B20}, 
for the parameter choice
	$c={1\over\sqrt{2}}$, $\omega={\pi\over4}$, $\alpha=0$ and 
	$\gamma={\pi\over4}$.
\end{example}

nnnnnnnnnnnnnnnnnnnnnnnnnnnnnnnnnnnnnnnnnnn
\begin{example}
We can generalise the previous example. 
	Let $(v_j)$ be an equiangular tight frame of $n$ vectors 
	for $\RR^{d-1}$, at angle
	$$ |\inpro{v_j,v_k}|=c, \qquad j\ne k. $$
Then the $4n$ vectors
	$$ w_{j,r} = \pmat{ tv_j\cr \sqrt{1-t^2}i_r}\in\RR^{d-1}\times\HH
	\subset\Hd, \qquad 1\le j\le n,\ 1\le r\le 4, $$
have inner products
	$$ \inpro{w_{j,r},w_{k,s}}
	=t^2\inpro{v_j,v_k}+(1-t^2)\overline{i_r}i_s $$
\end{example}
nnnnnnnnnnnnnnnnnnnnnnnnnnnnnnnnnnnnnnnnnnn

Another method to obtain tight equiangular lines is via
the {\bf complementary tight frame}. The construction is as follows.
Let $G$ be the Gramian of $n>d$ equiangular unit vectors in $\Fd$ 
at an angle $\gl={n-d\over d(n-1)}\ne 0$, 
so that $P={d\over n} G$ is an orthogonal projection matrix
(Proposition \ref{tightframeequidefs}).
The complementary orthogonal projection $Q=I-P$ gives an equiangular tight frame 
of $n$ vectors for $\FF^{n-d}$ with Gramian $G_c$ given by
$$ %Q = {n-d\over n}G^C \Implies 
G_c={n\over n-d}I-{d\over n-d}G, $$
and common angle $\gl_c={d^2\over(n-d)^2}\gl={d\over (n-d)(n-1)}$.
%This is called the {\bf complementary} equiangular tight frame.

Let $c_d$ be the right hand side of (\ref{nequiangbound}),
which we can write as
$$ c_d = d+{1\over2}d(d-1)\cdot m, \qquad m:=\dim_\RR(\FF). $$
Since the complementary tight frame also must satisfy the bound (\ref{nequiangbound}),
for $n-d\ne1$,
we have that an equiangular tight frame of $n>d+1$ unit vectors in $\Fd$ must
satisfy
\begin{equation}
\label{complementbound}
n\le \min\{c_d,c_{n-d}\}. 
\end{equation}
This gives the following (see Theorem 2.18 of \cite{K08}).

\begin{proposition} 
\label{tightlinesboundofn}
Let $d\ge2$.
An equiangular tight frame of $n>d+1$ vectors for $\Fd$ satisfies
\begin{equation}
\label{lowerupperbound}
d+{1\over2}+{\sqrt{{8\over m}d+1}\over 2} \le n
\le d+{m\over2}d(d-1), \qquad m=\dim_\RR(\FF),
\end{equation}
so that
\begin{equation}
\label{nicelowerbound}
n\ge d+2+j, \qquad 
	\hbox{for }\quad d > {m\over2}(j+1)(j+2).
\end{equation}
%\hbox{for $d\ge {m\over2}(j+1)(j+2)$}. $$
%$$ \FF:\qquad d>{m\over2} j(j+1)  \Implies n \ge d+2+j. $$
\end{proposition}

\begin{proof}
The condition $n\le c_{n-d}$ in (\ref{complementbound}) % The condition $n\le c_{n-d}$ 
can be written as
%$$ n\le n-d+{1\over2}(n-d)(n-d-1)\cdot m, $$
%$$ -d+{1\over2}(n-d)(n-d-1)\cdot m \ge 0, $$
%$$ {1\over2}(n-d)(n-d-1)\cdot m -d \ge 0, $$
%$$ (n-d)(n-d-1)- {2\over m}d \ge 0, $$
$$ n^2-(2d+1)n+d(d+1)- {2\over m}d \ge 0. $$
By considering the roots of this quadratic polynomial in $n$, this is satisfied if and only if
$$ n\le d+{1\over2}-{\sqrt{{8\over m}d+1}\over 2}<d, 
\quad\hbox{or}\quad
 n\ge d+{1\over2}+{\sqrt{{8\over m}d+1}\over 2},  $$
which gives the lower bound in (\ref{lowerupperbound}). The upper bound is
the condition $n\le c_d$.
%given by (\ref{nequiangbound}).

Rearranging the right hand inequality in 
$$ n \ge d+{1\over2}+{\sqrt{{8\over m}d+1}\over 2} \ge d+2+j, $$
gives 
$$ d \ge {m\over8}\bigl( (2j+3)^2-1\bigr) = {m\over2}(j+1)(j+2), $$
which gives (\ref{nicelowerbound}).
%We observe that the 
\end{proof}

The lower bound in (\ref{lowerupperbound}) is a decreasing function of $m$
and the upper bound is an increasing function of $m$.
This says that there is more room in $\Hd$ for tight equiangular 
lines than there is in $\Cd$, and in turn $\Rd$.

\begin{example} (Five tight equiangular lines in $\HH^3$)
By Proposition \ref{tightlinesboundofn},
there cannot be five tight equiangular lines
in $\RR^3$ or $\CC^3$, but they could exist in $\HH^3$.
We now construct such lines
as a complementary tight frame. The following five tight equiangular lines in 
$\HH^2$ with $\gl={3\over8}$ are given by \cite{B20}
$$ V= \pmat{ 1 & {\sqrt{3}\over2\sqrt{2}} & {\sqrt{3}\over2\sqrt{2}} & {\sqrt{3}\over2\sqrt{2}} & {\sqrt{3}\over2\sqrt{2}} \cr
0 & {\sqrt{5}\over2\sqrt{2}} 
& -{\sqrt{5}\over6\sqrt{2}}+{\sqrt{5}\over3}i 
& -{\sqrt{5}\over6\sqrt{2}}-{\sqrt{5}\over6}i + {\sqrt{5}\over2\sqrt{3}}j 
& -{\sqrt{5}\over6\sqrt{2}}-{\sqrt{5}\over6}i - {\sqrt{5}\over2\sqrt{3}}j 
}. $$
The complementary tight frame therefore gives five equiangular lines in $\HH^3$ at 
angle $\gl={1\over6}$. A concrete presentation of these lines is
$$ W=\pmat{ 1 & -{1\over\sqrt{6}} & -{1\over\sqrt{6}} & -{1\over\sqrt{6}} & -{1\over\sqrt{6}}  \cr
0 & {\sqrt{5}\over\sqrt{6}} & 
-{\sqrt{5}\over3\sqrt{6}}-{\sqrt{5}\over3\sqrt{3}}i & 
-{\sqrt{5}\over3\sqrt{6}}+{\sqrt{5}\over6\sqrt{3}}i-{\sqrt{5}\over6}j & 
-{\sqrt{5}\over3\sqrt{6}}+{\sqrt{5}\over6\sqrt{3}}i+{\sqrt{5}\over6}j  \cr
0 & 0 & {\sqrt{5}\over3} & 
-{\sqrt{5}\over6}+{\sqrt{5}\over2\sqrt{3}} k &
-{\sqrt{5}\over6}-{\sqrt{5}\over2\sqrt{3}} k
}. $$
This was obtained by the following general method. The condition $VV^*=AI$ for $V$ to be a tight frame 
%is that the conjugates of the rows of $V$ are orthogonal and of equal length, 
is that the rows of $V$ are orthogonal and of equal length, 
i.e., $V^*$ has orthogonal
columns of equal length. By using Gram-Schmidt, add orthogonal columns of equal length 
to obtain $[V^*, W^*]$ a scalar multiple of a unitary matrix. 
Then $W$ is a tight frame, which is the complement of $V$, since 
%We then have
%$$ \pmat{V^*&W^*}^* \pmat{V^*&W^*}
%= \pmat{V\cr W} \pmat{V^*&W^*}
%=\pmat{VV^*&VW^*\cr WV^*&WW^*}
%=A\pmat{I&0\cr0&I}, $$
$$ \pmat{V^*&W^*} \pmat{V^*&W^*}^* 
= \pmat{V^*&W^*} \pmat{V\cr W}  = V^*V+W^*W=AI. $$
%so that $W$ is tight frame ($WW^*=AI$) and it is the 
%complement of $V$, since $W^*W=AI-V^*V$.
\end{example}

Above we used the fact that the the rows of 
%Above we used the fact that the conjugates of the rows of 
the matrix $V$ giving a tight frame are orthogonal. 
For frames over $\CC$ this is equivalent to the rows 
being orthogonal. For frames over the quaternions,
it is necessary to make this distinction. Indeed, there
exist unitary matrices (orthogonal columns) whose rows
are not orthogonal, e.g.,
$$ U:={1\over\sqrt{2}}\pmat{1&i\cr j&k}, \qquad
U^*U=UU^*=\pmat{1&0\cr0&1}, \qquad (U^T)^*(U^T)=\pmat{1&j\cr-j&1}. $$

\begin{example} (Six tight equiangular lines in $\HH^4$)
By Proposition \ref{tightlinesboundofn},
there cannot be six tight equiangular lines
in $\RR^4$ or $\CC^4$, but they do exist in $\HH^4$,
by taking the complementary tight frame to the six tight equiangular
lines in $\HH^2$ of \cite{B20}.
\end{example}

%The six complementary lines
%$$ \pmat{1 & -{1\over\sqrt{10}} & -{1\over\sqrt{10}} & -{1\over\sqrt{10}} & -{1\over\sqrt{10}} & -{1\over\sqrt{10}} \cr
%0 & {3\over\sqrt{10}} &-{3\over4\sqrt{10}}-{\sqrt{3}\over4\sqrt{2}}i  
%& -{3\over4\sqrt{10}}+{1\over4\sqrt{6}}i-{1\over2\sqrt{3}}j
%& -{3\over4\sqrt{10}}+{1\over4\sqrt{6}}i+{1\over4\sqrt{3}}j-{1\over4}k
%& -{3\over4\sqrt{10}}+{1\over4\sqrt{6}}i+{1\over4\sqrt{3}}j+{1\over4}k \cr
%} $$

We now consider tight equiangular lines in general,
before giving a striking summary of the known results for two dimensions.
For $n$ tight equiangular lines in $\Hd$ (or $\Cd$, $\Rd$), the angle is
$$ \gl = {n-d\over d(n-1)}, \qquad n>d, $$
with the following specific cases (in order of the number of vectors)
$$ \gl=0 \quad\hbox{(orthonormal basis)}, \qquad
 \gl={1\over d^2} \quad\hbox{(vertices of a simplex)}, $$
and sets of lines giving the bounds of Theorem \ref{linearindepth}
$$ \gl={1\over d+2}, \qquad \gl={1\over d+1} \quad\hbox{(SIC)}, \qquad
\gl={1\over d+{1\over2}}\quad \hbox{(maximal set of lines in $\Hd$}). $$
The theory as is stands does not preclude the bounds above being reached by 
lines from a larger space, e.g., $n={1\over2}d(d+1)$ complex or even 
quaternionic lines in $\Hd$. This does not occur for two dimensions.
Since $$ {d\gl\over dn} = {d-1\over d(n-1)^2}>0, $$
$\gl$ increases with the number of tight equiangular lines $n$ (for $d$ fixed),
taking the possible values
$$ \gl=0,{1\over d^2}, \ldots, {1\over d+2},\ldots, {1\over d+1}, \ldots, {2\over 2d+1}. $$

Equiangular lines are classified up to projective unitary equivalence
(see Section \ref{projectequivsect}). \\
 In two dimensions, the tight equiangular 
lines given by an orthonormal basis, the Mercedes-Benz frame and the SIC (two, three and 
four vectors, respectively) are well known, as is their uniqueness in $\CC^2$.
Putting these examples together with the five and six sets of equianglar lines
of \cite{K08}, \cite{B20} gives a complete characterisation of equiangular lines in $\HH^2$.

\begin{theorem} There is a unique set of $n$ tight equiangular
lines in $\HH^2$ for $n=2,3,4,5,6$,
with corresponding angles
$\gl=0,{1\over4},{1\over3},{3\over8},{2\over 5}$.
\end{theorem}

\subsection{Equi-isoclinic and equichordal subspaces}

We now consider generalisations of equiangularity to 
$r$-subspaces ($r$-dimensional subspaces). Let $P_j$ and $P_k$ 
be the orthogonal projections onto $r$-subspaces $V_j$ and $V_k$.
Then 
$$ \norm{P_j-P_k}_F^2
= \trace((P_j-P_k)^2)
%= \trace(P_j^2+P_j^2-P_jP_k-P_kP_j)
= 2r-\trace(P_jP_k+P_kP_j) \ge 0. $$
For $\Fd=\Rd,\Cd$, we have $\trace(P_jP_k)=\trace(P_kP_j)\in\RR$, and a collection of 
$r$-subspaces is a said to be {\bf equichordal} (see \cite{FJMW17}) if 
the corresponding orthogonal projections satisfy
$$ \trace(P_jP_k)=\gl r, \qquad j\ne k, $$
which reduces to the equiangularity condition (\ref{equiangulardefn})
in the case of lines ($r=1$).

For $\Hd$, $\trace(P_jP_k)$ need not be real, 
nor equal to $\trace(P_kP_j)$,  e.g., for
$$ P={1\over 2}\pmat{1&-i\cr i&1}, \quad
Q= {1\over 2}\pmat{1&-j\cr j&1}, \qquad
PQ= {1\over 4}\pmat{1-k&-i-j\cr i+j&1-k}, $$
and $\trace(PQ)={1\over2}(1-k)\ne{1\over2}(1+k)=\trace(QP)$.
%, \quad QP= {1\over 4}\pmat{1+k&-i-j\cr i+j&1+k}. 
However, by (\ref{Htraceformulareal}), we do have
$$ \trace(P_jP_k+P_kP_j) = \Re(\trace(P_jP_k+P_kP_j))
=2\Re(\trace(P_jP_k)), $$
and so we say that $r$-subspaces in $\Hd$ (or $\Rd,\Cd$) 
are {\bf equichordal} if
\begin{equation}
\label{equichordalHdefn}
\Re(\trace(P_jP_k))=\gl r, \quad j\ne k
\Iff\norm{P_j-P_k}_F^2=2(1-\gl)r, \quad j\ne k.
\end{equation}

Two $r$-subspaces $\cV_j$ and $\cV_k$, $j\ne k$, are {\bf isoclinic} with parameter
$0\le\gl\le1$ (see \cite{LS73}, \cite{H76}) 
if the orthogonal projection $P_{jk}$ onto $\cV_j+\cV_k$ satisfies
$$ (1-\gl) P_{jk} = (P_j-P_k)^2. $$
%Note if $\gl=0$, then the projections are orthogonal, and we have the usual formula
%$$ P_{jk}=(P_j-P_k)^2=P_j^2-P_jP_k-P_kP_j+P_k^2=P_j+0+0+P_k=P_j+P_k. $$
%If $\gl=1$, then 
%$$ (P_j-P_k)^2 P_j =0 \Implies P_j=P_jP_kP_j \Implies V_j=V_k. $$
An equivalent condition to being isoclinic is
\begin{equation}
\label{isoclinicequivcdn}
P_jP_kP_j = \gl P_j,\qquad P_kP_jP_k = \gl P_k, \qquad j\ne k,
\end{equation}
which follows from the observation
$$ (1-\gl)P_j=(P_j-P_k)^2P_j \Iff P_jP_kP_j=\gl P_j. $$
Hoggar \cite{H76} claims that just one of the conditions (\ref{isoclinicequivcdn})
is required (over $\HH$), 
which follows by writing $P_j=V_jV_j^*$, $V_j^*V_j=I$, and the implications
$$ P_jP_kP_j=\gl P_j \Iff
V_j^*V_kV_k^*V_j=\gl I
\Iff V_k^*V_j V_j^*V_k 
\Iff P_kP_jP_k=\gl P_k. $$
%which certainly is true for $\Rd$ (see \cite{LS73}) and
%$\Cd$ (since all $m$-products can be calculated, see \cite{KMW19}).
Subspaces $(V_j)$ are said to be {\bf equi-isoclinic} with parameter $0\le\gl\le1$ if
(\ref{isoclinicequivcdn}) holds.  %for all $j\ne k$. 
Equi-isoclinic subspaces are equichordal, since
$$ P_jP_kP_j=\gl P_j \Implies
\Re(\trace(P_jP_k))=\Re(\trace(P_jP_kP_j))= \trace(\gl P_j)=\gl r. $$
The orthogonal complement $(V_j^\perp)$ of equichordal subspaces is equichordal, 
since
$$ \Re(\trace((I-P_j)(I-P_k)))
= d-r-r+\Re(\trace(P_jP_k))=d-2r+\gl r, \quad j\ne k. $$
However, the orthogonal complements $(V_j^\perp)$ of equi-isoclinic subspaces
$(V_j)$ are
not in general equi-isoclinic, as the following example shows. 

\begin{example} (Two isoclinic planes do not exist in $\RR^3$).
Consider the equi-isoclinic $1$-dimensional subspaces given by 
%the unit vectors $v_1$ and $v_2$ in $\RR^3$ 
$$ v_1=\pmat{1\cr0\cr0}, \quad
v_2=\pmat{\sqrt{1-a^2-b^2}\cr a\cr b}. $$
%Q_1=I-v_1v_1^*=\pmat{1&0&0 \cr 0&1&0\cr 0&0&1}
The orthogonal projections $Q_j=I-v_jv_j^*$ onto the complementary subspaces satisfy
$$ Q_1Q_2Q_1=\pmat{0&0&0\cr 0& 1-a^2 & -ab\cr 0& -ab & 1-b^2},
\qquad Q_1=\pmat{0&0&0\cr0&1&0\cr0&0&1}. $$
Hence, for $V_1^\perp$ and $V_2^\perp$ to be isoclinic, we must have that $a=b=0$,
i.e., $V_1=V_2$. Thus there cannot be two (nonequal) isoclinic planes in $\RR^3$,
despite the fact that there can be up to six equi-isoclinic lines in $\RR^3$.
\end{example}

%It also appears from the definition that the complements of isoclinic subspaces are
%not isoclinic subspaces. Are there two isoclinic planes in $\RR^3$?
%
%\begin{align*}
%(I-P_j)(I-P_k)(I-P_j)
%& = I-P_j-P_k+P_jP_k -P_j+P_j^2+P_kP_j-P_jP_kP_j \cr
%& = I-P_j-P_k+P_jP_k +P_kP_j-P_jP_kP_j \cr
%& = I-(P_j-P_k)^2-P_jP_kP_j \cr
%& = I-(P_j-P_k)^2-\gl P_j 
%\end{align*}
%applying the RHS to $x=P_jx$ gives
%$$ x-(P_j-P_k)^2P_j x-\gl P_j x
%=  x-(1-\gl)x-\gl x= 0, $$
%and applying it to $x\in V_j^\perp$ gives
%$$ x-P_k x+P_jP_k x
%= x-(I-P_j)P_k x
%= (I-P_j)(I-P_k) x $$

%We note the Frobenius distance
%$$ \norm{P_j-P_k}_F^2 = \trace((P_j-P_k)^2)
%=\trace(P_j)-2\trace(P_jP_k)+\trace(P_k)\ge 0, $$
%so if $\dim(V_j)=\dim(V_k)=r$, we have
%$$ 0\le \trace(P_jP_k) =\gl r\le r. $$

\section{From $\RR$ to $\CC$ and $\CC$ to $\HH$, and back}
\label{RtoCtoH}

There is a natural inclusion $\RR\subset\CC\subset\HH$ and
hence of $\Rd\subset\Cd\subset\Hd$. Since tight frames are determined up to 
unitary equivalence by their Gramians:
\begin{itemize}
\item There is a unitary map of a tight frame to $\Rd$ if and only if its Gramian has real entries,
and we say the tight frame is {\bf real}.
\item There is a unitary map of a tight frame to $\Cd$ if and only if its Gramian has complex entries,
and we say the tight frame is {\bf complex} if its Gramian has a nonreal entry.
\item If the Gramian of a tight frame has a noncomplex entry, 
then we say that it is a {\bf quaternionic} tight frame.
\end{itemize}
As an example, the four equiangular lines in $\HH^2$ of Hoggar \cite{H76} 
are lines in $\CC^2$ (see Example \ref{HoggarlinesinC^2}).
For tight frames up to projective unitary equivalence, i.e., thought of as lines, the 
corresponding analogue is more involved, see Section \ref{projectequivsect}.

There is also a natural identification of a point $z=x+iy\in\CC$ 
(in the complex plane) with a point $(x,y)\in\RR^2$ (in the plane). 
We generalise this, by defining an 
invertible $\RR$-linear map
\begin{equation}
\label{CdR2dcorrespondence}
[\cdot]_\RR: \Cd\to\RR^{2d}: v\mapsto\pmat{\Re v\cr\Im v}, \qquad
\Re v={v+\overline{v}\over2}, \quad \Im v={v-\overline{v}\over 2i}.
\end{equation}
Based on a thorough analysis of this, we will then define an analogous map $\Hd\to\CC^{2d}$.
The first subtle point, is that
$[\cdot]_\RR$ maps $k$-dimensional complex-subspaces of $\Cd$ to real $(2k)$-dimensional subspaces of $\RR^{2d}$.
To see why this is, 
we first calculate the image of a 
complex scalar multiple $\ga+i\gb$ of a vector $v=x+iy$ 
$$ (\ga+i\gb) v 
= (\ga+i\gb) (x+iy) 
= \ga x-\gb y+i(\ga y+\gb x), $$
%$$ v (\ga+i\gb)
%= (\Re(v)+i\Im(v)) (\ga+i\gb)
%=\ga\Re(v)-\gb\Im(v)+i(\ga\Im(v)+\gb\Re(v)), $$
%$$ \Re((\ga+i\gb)v)
%={ (\ga+i\gb)v +(\ga-i\gb)\overline{v}\over 2}
%=\ga{v+\overline{v}\over2}-\gb{v-\overline{v}\over 2i}
%=\ga\Re(v)-\gb\Im(v), $$
%$$ \Im((\ga+i\gb)v)
%={ (\ga+i\gb)v -(\ga-i\gb)\overline{v}\over 2i}
%=\gb{v+\overline{v}\over2}+\ga{v-\overline{v}\over 2i}
%=\ga\Im(v)+\gb\Re(v), $$
which gives
\begin{equation}
\label{Csubspaceimage}
[ (\ga+i\gb) v ]_\RR 
= \ga \pmat{\Re v\cr\Im v} 
+ \gb \pmat{-\Im v\cr\Re v} 
= \ga [v]_\RR + \gb [iv]_\RR.
\end{equation}
%$$ (\ga+i\gb)v\mapsto (\ga\Re(v)-\gb\Im(v), \ga\Im(v)+\gb\Re(v) ) 
%=\ga(\Re(v),\Im(v)) +\gb(-\Im(v),\Re(v) ). $$
Thus the one-dimensional complex subspace spanned by $v\in\Cd$
is mapped to the real two-dimensional subspace
$$ [\spam_\CC\{v\}]_\RR= \spam_\RR\{\pmat{\Re v\cr\Im v},\pmat{-\Im v\cr\Re v}\} \qquad
\hbox{(orthogonal vectors in $\RR^{2d}$)}. $$
The general result
% for $k$-dimensional subspaces 
then follows from the correspondence between linear
dependencies
%and there is a the natural correspondence between linear dependencies
$$ \sum_\ell (\ga_\ell+i\gb_\ell)v_\ell =0 \Iff
 \sum_\ell\Bigl\{ \ga_\ell\pmat{\Re v_\ell\cr\Im v_\ell }+\gb_\ell\pmat{-\Im v_\ell \cr\Re v_\ell}
\Bigr\} =0. $$
%and a dimension count.
We also calculate
\begin{align*} 
\inpro{v,w} 
&= \inpro{\Re v+i\Im v,\Re w+i\Im w} \cr
%= (\Re v+i\Im v)\cdot(\Re w-i\Im w) \cr
&= \inpro{\Re v,\Re w}+\inpro{\Im v,\Im w} 
+ i (\inpro{\Im v,\Re w}-\inpro{\Re v,\Im w}),
\end{align*}
%\begin{align*} 
%\inpro{v,w} 
%&= \inpro{\Re(v)+i\Im(v),\Re(w)+i\Im(w)}
%= (\Re(v)+i\Im(v))\cdot(\Re(w)-i\Im(w)) \cr
%&= \inpro{\Re(v),\Re(w)}+\inpro{\Im(v),\Im(w)} 
%+ i (\inpro{\Im(v),\Re(w)}-\inpro{\Re(v),\Im(w)}),
%\end{align*}
so that
\begin{equation}
\label{CtoRinpro}
\Re(\inpro{v,w}_\CC) = \inpro{[v]_\RR,[w]_\RR}_\RR, \quad
\Im(\inpro{v,w}_\CC) = \inpro{[v]_\RR,[iw]_\RR}_\RR, \qquad
\end{equation}
%and 
\begin{equation}
\label{CtoRortho}
\inpro{[v]_\RR,[iv]_\RR}_\RR=0. 
\end{equation}
%\begin{equation}
%\label{CtoRinpro}
%\Re(\inpro{v,w}_\CC) = \inpro{[v]_\RR,[w]_\RR}_\RR, \qquad
%\inpro{[v]_\RR,[iv]_\RR}_\RR=0.
%\end{equation}
%aaaaaaa
%weHere, for emphasis, we write $\inpro{\cdot,\cdot}_\FF$
Let $A:\CC^n\to\CC^m$ a $\CC$-linear map be represented as an $\RR$-linear map 
$[A]_\RR:\RR^{2n}\to\RR^{2m}$
under this identification, 
i.e., $[A]_\RR:= [\cdot]_\RR A[\cdot]_\RR^{-1}$. Then
%$$ [A]\pmat{u\cr v} 
%= \pmat{\Re(A(u+iv))\cr\Im(A(u+iv))}
%= \pmat{ {Au+iAv-(\overline{A}u-i\overline{A}v\over 2}\cr\Im(A(u+iv))} $$
\begin{align*} 
A(u+iv) &=(\Re(A)+i\Im(A))(u+iv) \cr
&= \Re(A)u-\Im(A)v+i\Im(A)u+i\Re(A)v, \quad u,v\in\RR^n,
\end{align*}
and $\Re(A^*)=\Re(A)^T$,
$\Im(A^*)=-\Im(A)^T$, 
so that
\begin{align*}
[A]_\RR &=\pmat{\Re(A)&-\Im(A)\cr\Im(A)&\Re(A)}, \qquad
\rank([A]_\RR)=2\rank(A), \cr
%$$ \Re(A(u+iv)) = { Au+iAv + (\overline{A}u-i\overline{A}v) \over 2}
%= {A+\overline{A}\over 2}u + {iA-i\overline{A}\over2}v. $$
%$$ \Im(A(u+iv)) = { Au+iAv - (\overline{A}u-i\overline{A}v) \over 2i}
%= {i\overline{A}-iA\over 2}u + {A+\overline{A}\over2}v. $$
%$$ [A] = \pmat{ {A+\overline{A}\over2} & {iA-i\overline{A}\over2} \cr
%{i\overline{A}-iA\over 2} & {A+\overline{A}\over2} }
%=\pmat{\Re(A)&-\Im(A)\cr\Im(A)&\Re(A)}. $$
%$$ \Re(A^*)={A^*+\overline{A^*}\over2}
%= ({A+\overline{A}\over2})^*= \Re(A)^T, $$
%$$ \Im(A^*)={A^*-\overline{A^*}\over2i} = ({A-\overline{A}\over -2 i})^*
%= -\Im(A)^T. $$
[A^*]_\RR&=\pmat{\Re(A)^T&\Im(A)^T\cr-\Im(A)^T&\Re(A)^T} = [A]_\RR^T.
\end{align*}
%$$ \trace([A])=\trace(A)+\trace(\overline{A})=2\trace(\Re(A)). $$
The usual rules for matrix multiplication follow, 
e.g, $[A]_\RR [B]_\RR = [AB]_\RR$. 
%$$ [A][B]
%= \pmat{\Re(A)\Re(B) -\Im(A)\Im(B)&-\Re(A)\Im(B)-\Im(A)\Re(B) \cr
%\Re(A)\Im(B)+\Im(A)\Re(B)& \Re(A)\Re(B) -\Im(A)\Im(B)} =[AB]. $$
%\begin{align*}
%AB &= (\Re(A)+i\Im(A))(\Re(B)+i\Im(B)) \cr
%&= (\Re(A)\Re(B) -\Im(A)\Im(B)) +i(\Re(A)\Im(B)+\Im(A)\Re(B)).
%\end{align*}
One must be careful if a vector $v\in\Cd$ is being thought of as
a $d\times 1$ matrix, i.e., the linear map $[v]:\CC\to\Cd:\ga\mapsto\ga v$,
since $[v]_\RR\in\RR^{2d\times 1}$, $[[v]]_\RR\in\RR^{2d\times 2}$.
In particular, the familiar formula $P=vv^*$ for the orthogonal projection
onto a unit vector $v\in\Cd$, is $P=[v][v]^*$, which maps as follows
$$ [P]_\RR=[[v]]_\RR[[v]^*]_\RR=[[v]]_\RR[[v]]_\RR^T, \qquad
[[v]]_\RR=\pmat{\Re v&-\Im v\cr\Im v&\Re v} . $$
This is the orthogonal projection onto
$$ [\spam_\CC\{v\}]_\RR
=\spam_\RR\{[v]_\RR,[iv]_\RR\}, \qquad
[v]_\RR=\pmat{\Re v\cr\Im v}, \quad [iv]_\RR=\pmat{-\Im v\cr\Re v}. $$
The identification $[\cdot]_\RR$ preserves various properties of linear maps, 
see Theorem \ref{mapspropspreserved}.
In particular, orthogonal projections map to orthogonal projections, and
hence:
\begin{itemize}
\item Equi-isoclinic subspaces of dimension $r$ in $\Cd$ 
correspond to equi-isoclinic subspaces of dimension $2r$ in $\RR^{2d}$,
and similarly for equichordal subspaces.
\end{itemize}

We now consider the situation for tight frames, which is somewhat more involved, e.g., 
a basis for $\Cd$ does not correspond to a basis for $\RR^{2d}$
(which has twice the dimension).
%since $P_jP_kP_j=\gl P_j$ gives
%$$ [P_j]_\RR[P_k]_\RR[P_j]_\RR=\gl[P_j]_\RR. $$
%
%Every sequence of nonzero vectors $\CC$ is a tight frame, but (obviously) not
%all correspond to tight frames in $\RR^2$. 
%
Let $V=V_1+iV_2$ be the synthesis map for a sequence of vectors $v_1,\ldots,v_n\in\Cd$, 
and $V_\RR$ be the corresponding map for the sequence $[v_1]_\RR,\ldots,[v_n]_\RR\in\RR^{2d}$,  i.e.,
$$ V_\RR= \pmat{V_1\cr V_2}\in\RR^{2d\times n}. $$
Then $V$ gives a tight frame for $\Cd$ if and only if
$$ VV^*=(V_1+iV_2)(V_1^*-iV_2^*)=V_1V_1^*+V_2V_2^*+i(V_2V_1^*-V_1V_2^*)=AI, $$
where 
$dA := \sum_j\norm{v_j}^2 = \trace(VV^*)=\trace(V_\RR V_\RR^T)= $,
i.e.,
$$ V_1V_1^T+V_2V_2^T=AI, \qquad V_2V_1^T-V_1V_2^T=0, $$
and $V_\RR$ gives a tight frame for $\RR^{2d}$ if and only if
$$ V_\RR V_\RR^*=\pmat{V_1\cr V_2}\pmat{V_1^T & V_2^T}
=\pmat{V_1V_1^T& V_1V_2^T\cr V_2V_1^T& V_2V_2^T}
= {1\over 2} A \pmat{ I&0\cr0& I}, $$
i.e.,
\begin{equation}
\label{Sversionofequiv}
V_1V_1^T = V_2 V_2^T = {1\over 2}AI,  \qquad V_1V_2^T=V_2V_1^T=0.
\end{equation}
Thus all tight frames for $\RR^{2d}$ map to tight frames for $\Cd$, and 
a tight frame for $\Cd$ gives a tight frame for $\RR^{2d}$ if and only if
(\ref{Sversionofequiv}) holds.
This condition says that $V_1$ and $V_2$ are tight frames for $\RR^d$
(with the same frame bound) which are orthogonal (see \cite{W18} \S 3.5).
We now show that 
(\ref{Sversionofequiv}) % it only 
depends only on $V$ up to unitary equivalence.

Let $U=U_1+iU_2$ be unitary, then
$UU^*=U_1U_1^T+U_2U_2^T+i(U_2U_1^T-U_1U_2^T)=I$, which is equivalent to
\begin{equation}
\label{unitarycdnI}
U_1U_1^T+U_2 U_2^T = I, \qquad U_2U_1^T-U_1U_2^T=0.
\end{equation}
Suppose that $V$ satisfies (\ref{Sversionofequiv}), then 
$$ UV=[Uv_1,\ldots,Uv_n]= (U_1+iU_2)(V_1+iV_2)=(U_1V_1-U_2V_2)+i(U_2V_1+U_1V_2), $$
$A=\sum_j\norm{v_j}^2=\sum_j \norm{Uv_j}^2$,
and using (\ref{unitarycdnI}), we calculate
\begin{align*}
\Re(UV)\Re(UV)^T &= (U_1V_1-U_2V_2) (V_1^TU_1^T-V_2^TU_2^T) 
%&= U_1(V_1V_1^T)U_1^T - U_1(V_1V_2^T)U_2^T -U_2(V_2V_1^T)U_1^T +U_2(V_2V_2^T)U_2^T\cr
= {1\over2} A (U_1U_1^T + U_2U_2^T)
= {1\over2}A I, \cr
\Im(UV)\Im(UV)^T 
&= (U_2V_1+U_1V_2)(V_1^TU_2^T+V_2^TU_1^T)
= {1\over2}A(U_2U_2^T+U_1U_1^T)= {1\over2}AI, \cr
\Re(UV)\Im(UV)^T &= (U_1V_1-U_2V_2) (V_1^TU_2^T+V_2^TU_1^T)
%= U_1V_1V_1^TU_2^T-U_2V_2V_2^TU_1
= {1\over2} A (U_1U_2^T-U_2U_1^T)= 0, 
\end{align*}
so that $UV$ satisfies (\ref{Sversionofequiv}).

Since the condition for a tight frame for $\Cd$ to be a tight frame for $\RR^{2d}$ depends
only on $V$ up to unitary equivalence, it follows %(see \cite{W20}) 
that this condition 
can be written in terms of the Gramian of $V$. The Gramians of $V$ and $V_\RR$ are
%\begin{align*}
%V^*V &= (V_1^*-iV_2^*) (V_1+iV_2) = V_1^TV_1+V_2^TV_2+i(V_1^TV_2-V_2^TV_1), \cr
%V_\RR^*V_\RR &= \pmat{V_1^T& V_2^T} \pmat{V_1\cr V_2} = V_1^TV_1+V_2^TV_2.
%\end{align*}
$$ V^*V= (V_1^*-iV_2^*) (V_1+iV_2) = V_1^TV_1+V_2^TV_2+i(V_1^TV_2-V_2^TV_1), $$
$$ V_\RR^*V_\RR = \pmat{V_1^T& V_2^T} \pmat{V_1\cr V_2} = V_1^TV_1+V_2^TV_2. $$
The variational characterisation for being a tight frame for $\Cd$ and for $\RR^{2d}$
are %(respectively)
$$ \norm{V^*V}_F^2 = {1\over d}(\trace(V^*V))^2, \qquad
\norm{V_\RR^*V_\RR}_F^2 = {1\over 2d}(\trace(V_\RR^*V_\RR))^2. $$
Since $\trace(V^*V)=  \trace(V_1^TV_1+V_2^TV_2) =\trace(V_\RR^* V_\RR)$, 
a tight frame for $\Cd$ gives a tight frame for $\RR^{2d}$ if and only if
\begin{equation}
\label{CtoRcdnI}
2 \norm{V_\RR^*V_\RR}_F^2 -\norm{V^*V}_F^2  =0. 
\end{equation}
By writing this explicitly in terms of $V^*V$ %  the Gramian of $V$ 
(cf \cite{W20b}),
we obtain the following.

%We have
%$$ \trace(V^*V)=  \trace(V_1^TV_1+V_2^TV_2) =\trace(V_\RR^* V_\RR), $$
%$$ \norm{V_\RR^* V_\RR}_F^2 = \trace\bigr( (V_1^TV_1+V_2^TV_2)^2\bigl), $$
%\begin{align*}
%\norm{V^*V}_F^2
%&=\trace( (V^*V)^2)
%= \trace\bigl( (V_1^TV_1+V_2^TV_2)^2
%+(V_1^TV_2-V_2^TV_1)(V_1^TV_2-V_2^TV_1)^T 
%\bigr) \cr
%& = \norm{V_\RR^*V_\RR}_F^2+\norm{V_1^TV_2-V_2^TV_1}_F^2.
%\end{align*}
%Thus if $V_\RR$ is tight frame, the condition for $V$ to be is
%$$ \norm{V_\RR^*V_\RR}_F^2+\norm{V_1^TV_2-V_2^TV_1}_F^2
%= 2\norm{V_\RR^*V_\RR}_F^2. 
%$$

\begin{theorem} 
\label{tightframesRtoC}
Let $[\cdot]_\RR:\Cd\to\RR^{2d}$ be the correspondence (\ref{CdR2dcorrespondence}) between
$\Cd$ and $\RR^{2d}$. Then 
\begin{enumerate}
\item Tight frames for $\RR^{2d}$ correspond to tight frames for $\Cd$.
\item A tight frame $V=[v_1,\ldots,v_n]$ for $\Cd$ corresponds to a tight frame
for $\RR^{2d}$ if and only if it satisfies
\begin{equation}
\label{CtoRcdnII}
\sum_j\sum_k \inpro{v_j,v_k}^2 =0,
\end{equation}
which can also be written as
\begin{equation}
\label{CtoRcdnIIextra}
\sum_j\sum_k (\Re \inpro{v_j,v_k})^2 
= \sum_j\sum_k (\Im \inpro{v_j,v_k})^2.
\end{equation}
%which is equivalent to
%\begin{equation}
%\label{RtoCframeopcdn}
%V_1V_1^T = V_2 V_2^T = {1\over 2}AI,  \qquad V_1V_2^T=V_2V_1^T=0,
%\qquad dA:=\sum_j\norm{v_j}^2.
%\end{equation}
%where $V=V_1+iV_2$, with $V_1,V_2\in\RR^{d\times n}$.
%, and $dA:=\sum_j\norm{v_j}^2$. 
%This condition depends only on $V$ up to unitary
%equivalence, and is equivalent to following condition on Gramian
\end{enumerate}
\end{theorem}

\begin{proof} In light of our previous discussion, it remains only to
show that (\ref{CtoRcdnI}) can be written as (\ref{CtoRcdnII}) and
(\ref{CtoRcdnIIextra}).
Using (\ref{CtoRinpro}), we have
\begin{align*}
\norm{V_\RR^*V_\RR}_F^2 -\norm{V^*V}_F^2
 &= 2\sum_j\sum_k\inpro{[v_j]_\RR,[v_k]_\RR}^2 - \sum_j\sum_k|\inpro{v_j,v_k}|^2 \cr
& = 2\sum_j\sum_k(\Re\inpro{v_j,v_k})^2 
- \sum_j\sum_k|\inpro{v_j,v_k}|^2 =0.
%- \sum_j\sum_k\Bigl((\Re\inpro{v_j,v_k})^2+(\Im\inpro{v_j,v_k})^2\Bigr)=0,
\end{align*}
By taking $z=\inpro{v_j,v_k}$ in 
$$ 2(\Re(z))^2 - |z|^2
= 2\Bigl({z+\overline{z}\over2}\Bigr)^2 - z\overline{z}
= {1\over2} (z^2+\overline{z}^2), $$
we see that this condition can be written as
$$ {1\over2}\sum_j\sum_k \bigl( \inpro{v_j,v_k}^2+\inpro{v_k,v_j}^2\bigr)
=\sum_j\sum_k\inpro{v_j,v_k}^2=0. $$
which gives (\ref{CtoRcdnII}).
By substituting in 
%$$ |\inpro{v_j,v_k}|^2=(\Re\inpro{v_j,v_k})^2+(\Im\inpro{v_j,v_k})^2, $$
$ |\inpro{v_j,v_k}|^2=(\Re\inpro{v_j,v_k})^2+(\Im\inpro{v_j,v_k})^2, $
we obtain (\ref{CtoRcdnIIextra}).
\end{proof}

\begin{example} A tight frame $(z_j)$ for $\CC$ corresponds to a tight frame for $\RR^2$
if and only if 
$$ 
\sum_j\sum_k (z_j\overline{z_k})^2
= \Bigl(\sum_j z_j^2\Bigr)\Bigl(\sum_k \overline{z_k}^2 \Bigr)
= \Bigl|\sum_j z_j^2\Bigr|^2=0 \Iff
\sum_j z_j^2=0. $$
The complex number $z_j^2=(x_j+iy_j)^2$ corresponding to a point $(x_j,y_j)$ is 
sometimes called a diagram vector, and the condition that a frame for $\RR^2$ is tight if and
only if its diagram vectors sum to zero is well known.
\end{example}

%Let $\HH$ be the quaternions, with $\overline{q}$ the conjugate. 
We now give a map $\HH^d\to\CC^{2d}$
that has similar properties to $[\cdot]_\RR:\Cd\to\RR^{2d}$. 
This is based on the following analogue of the polar decomposition
for $\CC$, the Cayley-Dickson construction, that every quaternion $q\in\HH$ can be written uniquely
\begin{equation}
\label{jswap}
q=z+w j, \qquad z,w\in\CC.
\end{equation}
Moreover, we observe the ``commutativity'' relation
$$ jz = \overline{z}j, \qquad\forall z\in\CC, $$
which implies
$$ jA=\overline{A}j, \qquad\forall A\in\CC^{m\times n}. $$
Let $\HH^d$ be a right vector space,
and define a $\CC$-linear map
\begin{equation}
\label{HdC2dcorrespondence}
[\cdot]_\CC:\HH^d\to\CC^{2d}:z+wj\mapsto\pmat{z\cr\overline{w}}, 
\end{equation}
The conjugation $\overline{w}$ is necessary for $\CC$-linearity:
$(z+wj)\ga=z\ga+wj\ga=z\ga+w\overline{\ga}j$ gives
$$ [(z+wj)\ga]_\CC 
%=\pmat{z\ga\cr\overline{w\overline{\ga}}}
=\pmat{z\ga\cr\overline{w}\ga} 
=\pmat{z\cr\overline{w}}\ga 
=[z+wj]_\CC\ga 
\qquad\forall\ga\in\CC. $$
Let $\Co_1$ and $\Co_2$ be the $\CC$-linear maps $\Hd\to\Cd$ 
giving the ``complex coordinates'' of $q=z+wj$, i.e.,
$$ \Co_1(z+wj):=z, \qquad \Co_2(z+wj):=\overline{w}.$$
We note in particular, that
$$ |q|^2=|\Co_1(q)|^2+|\Co_2(q)|^2. $$
From 
\vskip-1.0truecm
\begin{align*}
\inpro{v,w}_\HH
&= \inpro{v_1+v_2j,w_1+w_2j} \cr
%= \inpro{v_1,w_1}+\inpro{v_1,w_2j} +\inpro{v_2j,w_1}+\inpro{v_2j,w_2j} \cr
%&= \inpro{v_1,w_1}-j\inpro{v_1,w_2}+\inpro{v_2,w_1}j-j\inpro{v_2,w_2}j 
%= \inpro{v_1,w_1}-\inpro{ w_2 , v_1 }j+\inpro{v_2,w_1}j+\inpro{w_2,v_2} \cr
%&= (\inpro{v_1,w_1}+\inpro{\overline{v_2},\overline{w_2}})
%+(\inpro{v_2,w_1} -\inpro{ w_2 , v_1 })j, \cr
	&= (\inpro{v_1,w_1}+\inpro{\overline{v_2},\overline{w_2}})
	-( \inpro{-v_2,w_1} +\inpro{\overline{v_1},\overline{w_2}} )j, \cr
	& = \inpro{[v]_\CC,[w]_\CC}_\CC-\inpro{[vj]_\CC,[w]_\CC}_\CC j
\end{align*}
we get the analogues of 
(\ref{CtoRinpro})
and (\ref{CtoRortho})
\begin{equation}
\label{HtoCinpro}
\Co_1(\inpro{v,w}_\HH)=\inpro{[v]_\CC,[w]_\CC}_\CC, \qquad
\Co_2(\inpro{v,w}_\HH)=-\inpro{[vj]_\CC,[w]_\CC}_\CC.
\end{equation}
\begin{equation}
\label{HtoCortho}
\inpro{[v]_\CC,[vj]_\CC}_\CC=0. 
\end{equation}
\begin{equation}
\label{HtoRinpro}
\Re(\inpro{v,w}_\HH) 
= \Re(\Co_1(\inpro{v,w}_\HH))
= \Re(\inpro{[v]_\CC,[w]_\CC}_\CC)
= \inpro{[[v]_\CC]_\RR,[[w]_\CC]_\RR}_\RR.
\end{equation}

The analogue of (\ref{Csubspaceimage}) for $v=z+wj$ is
\begin{equation}
\label{Hsubspaceimage}
[ v(\ga+\gb j) ]_\CC 
= [ v\ga+ v\gb j ]_\CC 
= [ v\ga+ v j\overline{\gb}]_\CC 
= [ v]_\CC\ga+ [v j ]_\CC \overline{\gb}, \qquad\ga,\gb\in\CC,
\end{equation}
where
$$ [v]_\CC = \pmat{z\cr\overline{w}}, \quad
[vj]_\CC = \pmat{-w\cr\overline{z}}, \qquad
\inpro{[v]_\CC,[vj]_\CC}_\CC=0.  $$
Thus $[\cdot]_\CC$ maps $k$-dimensional $\HH$-subspaces of $\Hd$ to $(2k)$-dimensional
$\CC$-subspaces of $\CC^{2d}$.

Let $L:\HH^n\to\HH^m$ be an $\HH$-linear map be represented as a $\CC$-linear map 
$[L]_\CC:\CC^{2n}\to\CC^{2m}$
under this identification, 
i.e., $[L]_\CC:= [\cdot]_\CC L[\cdot]_\CC^{-1}$. 
In view of (\ref{jswap}),
its standard matrix $[L]_\HH\in\HH^{m\times n}$
has a unique decomposition
$$ [L]_\HH=A+Bj, \qquad A,B\in\CC^{m\times n}. $$
We have
\begin{align*}
L(z+wj) & = (A+Bj)(z+wj)
= Az+Awj+Bjz+Bjwj  \cr
&= Az+Awj+B\overline{z}j-B\overline{w}
%&= Az+j\overline{A}w+jBz-\overline{B}w
= Az -B\overline{w} +\overline{( \overline{B}z + \overline{A}\overline{w})}j, 
%&= Az -\overline{B}w +j\bigl( Bz+ \overline{A}w\bigr),
\end{align*}
and %The matrix representing the adjoint $L^*$ is
$$ [L^*]_\HH = (A+Bj)^* = A^*+(-j)B^* = A^*-\overline{B^*}j=A^*-B^T j, $$
so that
\begin{align*}
%\label{HtoCdecomp}
[L]_\CC & = \pmat{ A&-{B}\cr \overline{B}&\overline{A}}, \qquad
\rank([L]_\CC)=2\rank([L]_\HH), \cr
[L^*]_\CC &= \pmat{A^*&B^T\cr -B^*&A^T}=[L]_\CC^*.
\end{align*}
The other observations for the previous case also hold 
(see Theorem \ref{mapspropspreserved}),
in particular
\begin{itemize}
\item Equi-isoclinic subspaces of dimension $r$ in $\Hd$ 
correspond to equi-isoclinic subspaces of dimension $2r$ in $\CC^{2d}$,
and similarly for equichordal subspaces.
\end{itemize}

We now seek the analogue of Theorem \ref{tightframesRtoC}, this time starting
with the development in terms of the Gramian.
The variational characterisation
%(Theorem \ref{generalisedWelchbound})
 for $V=[v_1,\ldots,v_n]$ being a tight frame for $\Hd$ and
for $V_\CC:=\bigl[[v_1]_\CC,\ldots,[v_n]_\CC\bigr]$ being a tight frame for $\CC^{2d}$
are %(respectively)
$$ \norm{V^*V}_F^2 = {1\over d}(\trace(V^*V))^2, \qquad
\norm{V_\CC^*V_\CC}_F^2 = {1\over 2d}(\trace(V_\CC^*V_\CC))^2. $$
Since $\trace(V^*V)=\trace(V_\CC^* V_\CC)$, 
a tight frame for $\Hd$ gives a tight frame for $\CC^{2d}$ if and only if
\begin{equation}
\label{HtoCcdnI}
2 \norm{V_\CC^*V_\CC}_F^2 -\norm{V^*V}_F^2  =0. 
\end{equation}
Writing this explicitly in terms of the Gramian $V^*V$ %  the Gramian of $V$ 
gives the following.

%kkkkkkk

\begin{lemma}
\label{tightframesCtoHlemma}
Let $V=[v_1,\ldots,v_n]=V_1+V_2j\in\HH^{d\times n}$. 
Then the following are equivalent
\begin{enumerate}[\rm (i)]
\item $V_\CC=\bigl[[v_1]_\CC,\ldots,[v_n]_\CC\bigr]=\pmat{V_1\cr \overline{V_2}}\in\CC^{2d\times n}$ is a tight frame for $\CC^{2d}$.
%\item $\bigl[[v_1j]_\CC,\ldots,[v_nj]_\CC\bigr] = \pmat{-V_2\cr \overline{V_1}}$ is a tight frame for $\CC^{2d}$.
\item $$ V_1V_1^*=V_2V_2^*={1\over2}AI, \qquad V_1V_2^T=V_2V_1^T=0, \qquad
A:={1\over d} \sum_j\norm{v_j}^2. $$
\item $$ 
(V_1^*V_1+V_2^T\overline{V_2})^2
= {1\over 2}A\,(V_1^*V_1+V_2^T\overline{V_2}) , \qquad
A:={1\over d}\sum_j\norm{v_j}^2. $$
\item $$ \norm{\Co_1(V^*V)}_F^2=\sum_j\sum_k |\Co_1(\inpro{v_j,v_k})|^2
%=\sum_j\sum_k |\Co_2(\inpro{v_j,v_k})|^2
={1\over 2d} \Bigl(\sum_j\norm{v_j}^2\Bigr)^2. $$
\end{enumerate}
\end{lemma}

\begin{proof} 
In terms of the frame operator, 
%the conditions (i) and (ii) are
the condition (i) is
$$ \pmat{V_1\cr \overline{V_2}} \pmat{V_1\cr \overline{V_2}}^*
= \pmat{V_1\cr \overline{V_2}} \pmat{V_1^* & V_2^T}^*
= \pmat{V_1V_1^* & V_1V_2^T\cr\overline{V_2}V_1^*&\overline{V_2}V_2^T}
= {1\over2}A\pmat{I&0\cr0&I}, $$
%$$ \pmat{-V_2\cr \overline{V_1}} \pmat{-V_2\cr \overline{V_1}}^*
%= \pmat{-V_2\cr \overline{V_1}} \pmat{-V_2^* & V_1^T}^*
%= \pmat{V_2V_2^* & -V_2V_1^T\cr-\overline{V_1}V_2^*&\overline{V_1}V_1^T}
%= A\pmat{I&0\cr0&I}, $$
where $dA=\sum_j\norm{v_j}^2$, which is clearly equivalent to (ii).

In terms of the Gramian $V_\CC^*V_\CC =V_1^* V_1+V_2^T\overline{V_2}$ 
being (a multiple of) an orthogonal projection matrix 
(Proposition %\ref{PGramianchar}
\ref{tightframeequidefs}), 
the condition (i) is (iii).

In terms of the variational characterisation 
(Theorem \ref{generalisedWelchbound}), the condition (i) is
$$ \sum_j \sum_k |\inpro{[v_j]_\CC,[v_k]_\CC}|^2 = {1\over 2d}\sum_j\Bigl(\norm{[v_j]_\CC}^2\Bigr)^2, $$
which can be written as (iv), since $\inpro{[v]_\CC,[w]_\CC}_\CC=\Co_1(\inpro{v,w}_\HH)$
and $\norm{[v]_\CC}=\norm{v}_\HH$.
% These are both clearly equivalent to (iii).
\end{proof}

We observe that condition the (iv) depends only on $V$ up to unitary equivalence,
and so the others do also.

\begin{theorem} 
\label{tightframesCtoH}
Let $[\cdot]_\CC:\Hd\to\CC^{2d}$ be the correspondence (\ref{HdC2dcorrespondence}) between
$\Hd$ and $\CC^{2d}$. Then 
\begin{enumerate}
\item Tight frames for $\CC^{2d}$ correspond to tight frames for $\Hd$.
\item A tight frame $V=[v_1,\ldots,v_n]$ for $\Hd$ corresponds to a tight frame
for $\CC^{2d}$ if and only if it satisfies
\begin{equation}
\label{HtoCcdnII}
\sum_j\sum_k |\Co_1(\inpro{v_j,v_k})|^2
=\sum_j\sum_k |\Co_2(\inpro{v_j,v_k})|^2.
\end{equation}
%which is equivalent to
%\begin{equation}
%\label{CtoHframeopcdn}
%V_1V_1^*=V_2V_2^*={1\over2}AI, \qquad V_1V_2^T=V_2V_1^T=0, \qquad
%dA:=\sum_j\norm{v_j}^2.
%\end{equation}
%where $V=V_1+V_2j$, $V_1,V_2\in\CC^{d\times n}$.
%This condition depends only on $V$ up to unitary
%equivalence, and is equivalent to following condition on Gramian
\end{enumerate}
\end{theorem}

\begin{proof} The sequence $V=V_1+V_2j$ is tight frame for $\Hd$ if and only if
$$ VV^* = (V_1+V_2j) (V_1^*-V_2^Tj) 
= (V_1V_1^* +V_2V_2^*)+(V_2V_1^T -V_1V_2^T)j = AI,  $$
%\begin{align*}
%VV^* &= (V_1+V_2j) (V_1^*-V_2^Tj) 
%= V_1V_1^*-V_1V_2^Tj+V_2jV_1^*-V_2jV_2^Tj\cr
%&= V_1V_1^*-V_1V_2^Tj+V_2V_1^Tj+V_2V_2^*
%= (V_1V_1^* +V_2V_2^*)+(V_2V_1^T -V_1V_2^T)j
%= AI, 
%\end{align*}
which is clearly satisfied if $V$ corresponds to a tight frame for $\CC^{2d}$
(by Proposition \ref{tightframesCtoHlemma}).

The variational characterisation for being a tight frame for $\Hd$ and for $\CC^{2d}$
are %(respectively)
$$ \norm{V^*V}_F^2 = {1\over d}\Bigl(\sum_j\norm{v_j}^2\Bigr)^2, \qquad
\norm{\Co_1(V^*V)}_F^2 = {1\over 2d}
%(\trace(V_\CC^*V_\CC))^2.
\Bigl(\sum_j\norm{v_j}^2\Bigr)^2.  $$
Hence, if $V$ gives a tight frame for $\Hd$, then it gives a tight frame
for $\CC^{2d}$ if and only if
$$ 2\norm{\Co_1(V^*V)}_F^2- \norm{V^*V}_F^2 =0. $$
Since $|\inpro{v_j,v_k}|^2=|\Co_1(\inpro{v_j,v_k})|^2 +|\Co_2(\inpro{v_j,v_k})|^2$,
this is (\ref{HtoCcdnII}).
\end{proof}

The conditions (\ref{CtoRcdnIIextra}) and (\ref{HtoCcdnII})
can be written insightfully as
$$ \norm{\Re(V^*V)}_F=\norm{\Im(V^*V)}_F, \qquad
\norm{\Co_1(V^*V)}_F=\norm{\Co_2(V^*V)}_F. $$

\begin{example} 
Let $V=[1,i,j,k]$, which is a tight frame for $\HH$.
The Gramian is
$$ V^*V
=\pmat{1&i&j&k \cr -i&1&-k&j \cr -j&k&1&-i \cr -k&-j&i&1 }
=\pmat{1&i&0&0 \cr -i&1&0&0 \cr 0&0&1&-i \cr 0&0&i&1 } 
+\pmat{0&0&1&i \cr 0&0&-i&1 \cr -1&i&0&0 \cr -i&-1&0&0 }j, $$
so this gives a tight frame for $\CC^2$, i.e., $W=[e_1,ie_1,e_2,ie_2]$, with Gramian
$$ W^*W=\pmat{ 1&i&0&0 \cr -i&1&0&0 \cr 0&0&1&i \cr 0&0&-i&1 }
=\pmat{ 1&0&0&0 \cr 0&1&0&0 \cr 0&0&1&0 \cr 0&0&0&1 }
+i\pmat{ 0&1&0&0 \cr -1&0&0&0 \cr 0&0&0&1 \cr 0&0&-1&0 }, $$
so that this in turn gives a tight frame for $\RR^4$, i.e., $[e_1,e_3,e_2,e_4]$.
\end{example}

\begin{example} Consider the Gramian of the SIC of four vectors in $\CC^2$
(Example \ref{HoggarlinesinC^2}).
The contribution to $\norm{V^*V}_F$ %the square of Frobenius norm 
of the diagonal entries, 
which are all real, is $4$,
and for the off diagonal entries it is $12{1\over3}=4$.
%The Frobenius norm of the diagonal entries, which are all real, is $4$,
%and for the off diagonal entries it is $12{1\over3}=4$. 
Thus the SIC corresponds
to a tight frame for $\RR^4$ if and only if its vectors can be scaled so
that the off diagonal entries of the Gramian are pure imaginary.
This can in fact be done, e.g., take $V=[v,iSv,i\gO v,-S\gO v]$, to obtain 
$$ \pmat{\Re(V)\cr\Im(V)}
=\pmat{a&-b&0&b \cr b&0&b&-a\cr 0&b&a&b\cr b&a&-b&0},
\qquad a={\sqrt{3+\sqrt{3}}\over\sqrt{6}}, \
b={\sqrt{3-\sqrt{3}}\over2\sqrt{3}}. $$
This is an orthonormal basis, by Proposition \ref{tightframeequidefs},
or directly by using (\ref{CtoRinpro}). Hence there is a norm-preserving
(invertible) $\RR$-linear map $\CC^2\to\RR^4$ which maps the SIC to an
orthonormal basis.
\end{example}

We now summarise some basic results about
$[\cdot]_\FF$, $\FF=\RR,\CC$, and the associated linear maps, in a unified form.
We first observe that in the literature, there is some variation in the definitions,
in particular, the ordering of $[v]_\RR$ can be either of
$$ [v]_\RR =\pmat{\Re(v)\cr\Im(v)}, \qquad \pmat{\Re(v_1)\cr\Im(v_1)\cr\vdots\cr
\Re(v_d)\cr\Im(v_d)}, $$
and similarly for $[v]_\CC$. In the latter case (cf \cite{H76}, \cite{R14}
for $[v]_\CC$), the matrix representation
$[A]_\RR$ is then obtained by replacing the entry $a_{jk}$ of the matrix $A$ by the matrix
$$\pmat{\Re(a_{jk})&-\Im(a_{jk})\cr\Im(a_{jk})&\Re(a_{jk})}. $$
Our choice of the former was governed by the simpler formulas (cf \cite{C80}).
Indeed, with $L=A+iB,A+Bj$ (respectively), we have the explicit formulas
\begin{equation}
\label{singlematrep}
[L]_\FF = \pmat{A&-B\cr\overline{B}&\overline{A}}, \qquad
[L^*]_\FF = \pmat{A^*&B^T\cr -B^*&A^T}=[L]_\FF^*, \qquad
\FF=\RR,\CC.
\end{equation}
%We have not yet needed the inverse of a matrix (or linear map) over $\HH$.
%It can be shown that if $AB=I$ (for matrices over $\HH$) then
%$BA=I$, and so a right inverse exists for $A$ if and only if a left inverse exists, 
%and these inverses are equal (and denoted by $A^{-1}$).

\begin{theorem} 
\label{mapspropspreserved}
The $\FF$-linear maps $[\cdot]_\FF$, $\FF=\RR,\CC$ given by
(\ref{CdR2dcorrespondence}) and (\ref{HdC2dcorrespondence})
have the following properties
\begin{enumerate}[\rm(a)]
\item They map $r$-dimensional subspaces to $(2r)$-dimensional subspaces.
\item They preserve the Euclidean norm of a vector.
\item They map orthogonal vectors to orthogonal vectors.
\item They map tight frames 
satisfying (\ref{CtoRcdnIIextra}) and (\ref{HtoCcdnII}), respectively,
to tight frames.
\item They map equi-isoclinic $r$-subspaces to equi-isoclinic $(2r)$-subspaces.
\item They map equichordal $r$-subspaces to equichordal $(2r)$-subspaces.
% (of twice the dimension).
\end{enumerate}
Moreover, the associated $\FF$-linear maps $L\mapsto[L]_\FF$ to matrices over $\FF$ 
%given by (\ref{singlematrep}) 
satisfy
\begin{enumerate}[\rm(i)]
\item $[AB]_\FF=[A]_\FF[B]_\FF$, 
$[\gl A]_\FF=\gl[A]_\FF$, $\gl\in\RR$, 
and $[A^*]_\FF=[A]_\FF^*$.
\item They map rank $r$ linear maps to rank $2r$ linear maps.
\item They map invertible linear maps to invertible linear maps,
with $[A^{-1}]_\FF=[A]_\FF^{-1}$. % (for $A$ invertible).
\item They map self adjoint operators to self adjoint operators.
\item They map unitary operators to unitary operators.
\item They map orthogonal projections to orthogonal projections, 
and in particular the identity to the identity.
\end{enumerate}
\end{theorem}

\begin{proof} 
For the first part, (a) has already been observed,
(b) and (c) follow directly from
(\ref{CtoRinpro}) and (\ref{HtoCinpro}), 
(d) follows from Theorems 
\ref{tightframesRtoC}
and \ref{tightframesCtoH}, 
and (e) and (f) follow from the definitions
(\ref{isoclinicequivcdn}) and
(\ref{equichordalHdefn}), and the facts (i), (ii), (vi).

Now the second part. The first part of (i) follows from the definition,
and the second part was a calculation that we did in each case.
For (ii), we have $\ker([L]_\FF)=[\ker(L)]_\FF$, and the result follows
from (a), with (iii) being a special case. If $A$ is invertible, then
(i) gives $I=[I]_\FF=[AA^{-1}]_\FF=[A]_\FF[A^{-1}]_\FF$, which gives the
formula for the inverse. The properties (iv), (v) and (vi) are straightforward
calculations using (\ref{singlematrep}).
\end{proof}

\begin{example} From the observation 
$$ j(A_1+A_2 j) = (\overline{A_1}+\overline{A_2}j)j, \qquad
A_1,A_2\in\CC^{m\times n}, $$
it follows that the image of the $m\times n$ matrices over $\HH$ is
$$ [\HH^{m\times n}]_\CC = \{A\in\CC^{2m\times 2n}:
J_m A=\overline{A}J_n\},\qquad
J_\ell:=[jI_\ell]_\CC=\pmat{0&-I_\ell\cr I_\ell&0}. $$
\end{example}

\begin{example}
\label{quaternionsgroupframe}
If $G$ is a group of $d\times d$ matrices over $\CC$ or $\HH$,
then it follows from Theorem \ref{mapspropspreserved} that
$[G]_\FF=\{[g]_\FF:g\in G\}$ is an isomorphic group of
$(2d)\times(2d)$ matrices.
As an example, the quaternions $Q=\{\pm1,\pm i,\pm j,\pm k\}$
are generated by $i$ and $j$, and so the groups of unitary matrices
$[Q]_\CC$ and $[[Q]_\CC]_\RR$ are generated by
$$ [i]_\CC= \pmat{i&0\cr0&-i}, \qquad
[j]_\CC=\pmat{0&-1\cr1&0}, $$
$$ [[i]_\CC]_\RR= \pmat{0&0&-1&0 \cr 0&0&0&1\cr 1&0&0&0\cr 0&-1&0&0}, \qquad
 [[j]_\CC]_\RR= \pmat{0&-1&0&0 \cr 1&0&0&0\cr 0&0&0&-1\cr 0&0&1&0}, \qquad $$
respectively. These %irreducible 
representations of $Q$ are well known.
\end{example}

\begin{example}
If $V=[v_1,\ldots,v_n]\in\HH^{d\times n}$ gives a tight frame of $n$ vectors for 
$\Hd$, i.e., $VV^*=AI$, then 
$$ [V]_\CC=\bigl[ [v_1]_\CC,\ldots,[v_n]_\CC,[v_1j]_\CC,\ldots,[v_nj]_\CC\bigr]$$
gives a tight frame of $2n$ vectors for $\CC^{2d}$.
\end{example}

The equiangular lines in $\HH^2$ of \cite{B20} were obtained
by considering equi-isoclinic planes in $\CC^4$. We now 
explain the mechanism.

\begin{example}
Associated with a unit vector $v_a\in\Hd$, we have
$$V_a:=[[v_a]_\CC,[v_aj]_\CC]\in\CC^{2d\times2}, $$ 
with orthonormal columns
which span a plane in $\CC^{2d}$. 
The entries of the ``block Gramian'' for $V=[V_1,\ldots,V_n]$ are
$V_a^*V_b$ (with $V_a^*V_a=I$).
These satisfy
\begin{equation}
\label{hardtoproveactually}
 (V_a^*V_b)^*(V_a^*V_b) =
\pmat{|\inpro{v_a,v_b}_\HH|^2 & 0 \cr 0&|\inpro{v_a,v_b}_\HH|^2},
\end{equation}
so that
$$ |\inpro{v_a,v_b}|^2=\gl \Iff (V_a^*V_b)^*(V_a^*V_b) = \gl I. $$
Thus $(v_a)$ gives a set of equiangular lines in $\Hd$ if and only if the
the off diagonal entries of the block Gramian $[V_1,\ldots,V_n]^*[V_1,\ldots,V_n]$
are unitary matrices, up to a fixed scalar.
An $n\times n$ block matrix with this structural form ($2\times2$ blocks,
positive semi-definite of rank $2d$), which corresponds to
equi-isoclinic planes in $\CC^{2d}$,  can then be mapped back (under
$[\cdot]_\CC^{-1}$) to the Gramian of $n$ equiangular lines in $\Hd$,
see Theorem 13, \cite{B20}.
\end{example}

The equation (\ref{hardtoproveactually}) follows by a direct calculation, e.g.,
using (\ref{HtoCinpro}), we have
\begin{align*}
(V_b^*V_aV_a^*V_b)_{11}
&= [v_b]_\CC^*[v_a]_\CC[v_a]_\CC^*[v_b]_\CC +[v_b]_\CC^*[v_aj]_\CC[v_aj]_\CC^*[v_b]_\CC \cr
&= \inpro{ [v_a]_\CC, [v_b]_\CC}_\CC \inpro{ [v_b]_\CC, [v_a]_\CC}_\CC
+\inpro{ [v_aj]_\CC, [v_b]_\CC}_\CC \inpro{ [v_b]_\CC, [v_aj]_\CC}_\CC \cr
&= |\Co_1(\inpro{v_a,v_b}_\HH)|^2 
+|\Co_2(\inpro{v_a,v_b}_\HH)|^2=|\inpro{v_a,v_b}_\HH|^2, 
\end{align*}
\begin{align*}
(V_b^*V_aV_a^*V_b)_{12}
&= [v_b]_\CC^*[v_a]_\CC[v_a]_\CC^*[v_bj]_\CC +[v_b]_\CC^*[v_aj]_\CC[v_aj]_\CC^*[v_bj]_\CC\cr
&= \inpro{ [v_a]_\CC,[v_b]_\CC}_\CC \inpro{ [v_bj]_\CC, [v_a]_\CC}_\CC
+\inpro{ [v_aj]_\CC, [v_b]_\CC}_\CC \inpro{[v_bj]_\CC, [v_aj]_\CC}_\CC \cr
&= \Co_1(\inpro{v_a,v_b}_\HH) (-\Co_2(\inpro{v_b,v_a}_\HH))
-\Co_2(\inpro{v_a,v_b}_\HH) \Co_1(\inpro{v_bj,v_aj}_\HH) \cr
&= \Co_1(\inpro{v_a,v_b}_\HH) \Co_2(\inpro{v_a,v_b}_\HH)
-\Co_2(\inpro{v_a,v_b}_\HH) \Co_1(\inpro{v_a,v_b}_\HH) =0, 
\end{align*}
where in the second to last equality we used $\Co_2(\overline{q})=-\Co_2(q)$, $q\in\HH$.

%\begin{align*}
%V_b^* V_a V_a^* V_b
%&= \pmat{ [v_b]^*[v_a]& [v_b]^*[v_aj]\cr [v_bj]^*[v_a]&[v_bj]^*[v_aj]} 
% \pmat{ [v_a]^*[v_b]& [v_a]^*[v_bj]\cr [v_aj]^*[v_b]&[v_aj]^*[v_bj]}  \cr
%& = \pmat{ 
%[v_b]^*[v_a][v_a]^*[v_b] +[v_b]^*[v_aj][v_aj]^*[v_b]
% & [v_b]^*[v_a][v_a]^*[v_bj] +[v_b]^*[v_aj][v_aj]^*[v_bj]
% \cr 3 & 4} 
%\end{align*}

Here is a construction of equiangular lines % obtained by
going in the opposite direction.

\begin{example}
We consider the construction of $64$ equiangular lines in $\CC^8$ by %Hoggar 
\cite{H98}. These were obtained by finding $64$ unit vectors in $\HH^4$ 
with angles ${1\over 9},{1\over3}$ (as vertices of a quaternionic polytope).
These were then mapped by $[\cdot]_\CC$ to $64$ equiangular vectors in $\CC^8$.
We note that for $v,w\in\Hd$, $\ga\in\HH$, (\ref{HtoCinpro}) gives
\begin{align*}\inpro{[v\ga]_\CC,[w]_\CC}_\CC
&= \Co_1(\inpro{v\ga,w}_\HH)= \Co_1(\inpro{v,w}_\HH\ga) \cr
&= \Co_1(\ga) \Co_1(\inpro{v,w}_\HH)
- \Co_2(\ga) \overline{\Co_2(\inpro{v,w}_\HH)},
\end{align*}
so that multiplying vectors in $\Hd$ by noncomplex unit scalars in $\HH$
can change the angle between their images in $\CC^{2d}$. 
\end{example}

\section{Group frames and $G$-matrices}

Many tight frames of interest are the orbit one or more vectors
under the unitary action of a group, e.g., the Weyl-Heisenberg SICs. 
There is a well developed 
theory of such group frames based in the theory of group representations
(over $\RR$ and $\CC$)
\cite{VW05}, \cite{VW16}, \cite{W18}. 
We now give an indication of how this theory extends to representations
over $\HH$ (see \cite{SS95}).

A {\bf representation} of a finite abstract group $G$ on
$\Hd$ is group homomorphism $\rho:G\to\GL(\Hd)$ from $G$ to the 
invertible $d\times d$ matrices over $\HH$, 
with equivalence defined in the usual way. 
We will consider only {\bf unitary representations}, i.e.. those where
the matrices $\rho(g)$ are unitary.
For these, we will write the unitary action
as $gv:=\rho(g)v$, and we note that $g^*v=g^{-1}v$.
A frame (sequence of vectors) of the form $(gv)_{g\in G}$ 
is said to be a {\bf group frame} (or {\bf $G$-frame}). 
The frame operator of a group frame $(gv)_{g\in G}$ commutes with the 
frame operator, i.e.,
\begin{equation}
\label{SandGcommute}
S(hv) =\sum_{g\in G} gv\inpro{hv,gv}
=h\sum_{g\in G} h^{-1}gv\inpro{v,h^{-1}gv}
=h S(v), \quad g,h\in G, \ v\in\Hd.
\end{equation}
The Gramian of a group matrix has entries of the form
$$ \inpro{hv,gv}=\inpro{g^{-1}hv,v}. $$
A matrix $A=[a_{gh}]_{g,h\in G}\in \HH^{G\times G}$ is a 
{\bf $G$-matrix} (or {\bf group matrix}) if there exists a function
$\nu:G\to\HH$ such that
$$ a_{gh}=\nu(g^{-1}h), \qquad\forall g,h\in G. $$
The Gramian of a $G$-frame is a $G$-matrix, and conversely
if the Gramian of a frame $(v_g)_{g\in G}$ with vectors indexed by $G$
is a $G$-matrix, then it is a $G$-frame (adapt the proof of \cite{W18} Theorem 10.3).
An action (representation) of $G$ on $\Hd$ is {\bf irreducible} if the only $G$-invariant subspaces 
of $\Hd$ are $0$ and $\Hd$, i.e., $\spam_\HH\{gv\}_{g\in G}=\Hd$, for all $v\ne0$.

The theory of $G$-frames
for real and complex actions 
begins with irreducible actions, where it takes its simplest form.
This extends without issue.

\begin{proposition}
Suppose that a unitary action of a group $G$ on $\Hd$ is
irreducible. Then $(gv)_{g\in G}$ is a tight $G$-frame for $\Hd$ for 
any $v\ne 0$, i.e.,
$$ x={d\over |G|} {1\over\norm{v}^2}\sum_{g\in G} gv\inpro{x,gv}, \qquad
\forall x\in\Hd. $$
\end{proposition}

\begin{proof} Fix $v\ne0$, and let $S$ be the frame operator of $(gv)_{g\in G}$.
Since $S$ is nonzero and positive semidefinite, it has an eigenvalue $\gl>0$, 
with corresponding eigenvector $w$. By (\ref{SandGcommute}), $S$ commutes with the action of $g\in G$,
we have
$$ S(gw)=g(Sw)=g(w\gl)=(gw)\gl, $$
so that $gw$ is an eigenvector for $\gl$. But $(gw)_{g\in G}$ spans $\Hd$, 
so that $S=\gl I$, i.e., $(gv)_{g\in G}$ is a tight frame.
Since $S$ is Hermitian, taking the trace gives
$$ \trace(S)=\Re(\trace(S)) =\sum_g \norm{gv}^2=|G|\, \norm{v}^2=\trace(\gl I)= d\gl, $$
which gives the value of $\gl$.
\end{proof}

The general theory \cite{VW16}, \cite{W18}, which allows for multiple orbits, 
involves the decomposition of the vector space into irreducible $G$-invariant subspaces.

\begin{example}
Each finite subgroup of $\HH^*$ corresponds to a (faithful) irreducible action
on $\HH^1$. These subgroups were classified by Stringham \cite{S81}. 
They are the infinite families of cyclic groups (generated by the $n$-th roots of unity)
and binary dihedral groups, together with the binary tetrahedral, octahedral and icosahedral groups.
%On $\CC^2$ these correspond to the known irreducible representations.
\end{example}

\begin{example}
The group generated by the matrices
$$ \pmat{0&1\cr1&0}, \quad
\pmat{1&0\cr0&i},  \quad
\pmat{1&0\cr0&j}, $$
has an irreducible unitary action on $\HH^2$.
It consists of all $128$ invertible matrices with two zero entries and two entries in $Q$.
It contains the scalar matrices from $Q$ and its center is $\pm I$.
Thus each orbit can be viewed as $16$ lines in $\HH^2$ (as a left vector space).
This is an example of (quaternionic) reflection group, i.e., 
a finite group generated by reflections (linear maps which act 
as the identity on a hyperplane). 
The finite irreducible quaternionic reflection groups
have been classified (up to conjugacy) by Cohen \cite{C80}.
\end{example}

It is expected that the highly symmetric tight frames of \cite{BW13} corresponding
to complex reflection groups could be extended to the quaternionic reflection groups.
In this regard we note the regular quaternionic polytopes have been classified by % Cuypers 
\cite{C95}.

For $G$ abelian, there are a finite number of tight $G$-frames (called harmonic frames)
that can be obtained by ``taking rows of the character table''
(see \cite{VW05}, \cite{CW11}). We now give an example to show how this can
be extended to the quaternionic setting.

\begin{example}
\label{Hharmonicframes}
 (Quaternionic harmonic frames).
The irreducible representations over $\CC$ for an abelian group $G$ are all one-dimensional
(this characterises abelian groups), and these ``rows'' of the character table 
are orthogonal, so by taking a set of rows of the character table one obtains a
tight $G$-frame. Consider the quaternion group $G=Q$. This has four $1$-dimensional
and one $2$-dimensional irreducible representations over $\CC$.
The $2$-dimensional absolutely irreducible representation splits
into four $1$-dimensional representations over $\HH$, corresponding to the 
outer automorphisms of the quaternions. In this way, one obtains a character table
$$
\begin{array}{lcccccccc}
q\in Q & 1 & -1 & i & -i & j & -j & k & -k \\
\hline
\chi_1 & 1 & 1 & 1 & 1 & 1 & 1 & 1 & 1 \\
\chi_2 & 1 & 1 & 1 & 1 & -1 & -1 & -1 & -1 \\
\chi_3 & 1 & 1 & -1 & -1 & 1 & 1 & -1 & -1 \\
\chi_4 & 1 & 1 & -1 & -1 & -1 & -1 & 1 & 1 \\
\chi_5 & 1 & -1 & i & -i & j & -j & k & -k \\
\chi_6 & 1 & -1 & j & -j & i & -i & -k & k \\
\chi_7 & 1 & -1 & -i & i & k & -k & j & -j \\
\chi_8 & 1 & -1 & k & -k & -i & i & -j & j 
\end{array}
$$
where the rows are orthogonal (cf \cite{SS95}). Taking rows gives a $G$-frame. 
The columns of the character table are also orthogonal, so taking columns also 
gives a tight frame, but these are not $G$-frames, in general (as follows 
for abelian groups by Pontryagin duality).
As an example, the frame obtained by taking the characters $\chi_1$ and $\chi_5$ (rows $1$ and
$5$ of the character table) gives a (unit-norm) tight $Q$-frame for $\HH^2$, 
with the inner products $\{1\pm 1,1\pm i, 1\pm j,1\pm k\}$
occurring exactly once in every row (column) of the Gramian. 
This frame has two angles: each vector is orthogonal to one other, and makes a fixed angle 
with all the others.
By comparison, taking columns $1$ and $3$ gives a unit-norm tight frame for $\HH^2$, 
which is not a $G$-frame for any $G$, since its Gramian is not a $G$-matrix.
%The product of (linear) characters is no longer a character, though the 
%columns of the character table are closed under pointwise multiplication
%(they form a group isomorphic to $G$).
\end{example}

\section{Projective unitary equivalence}
\label{projectequivsect}

Finally, we consider the equivalence of vectors thought of as lines in $\Hd$. 
Here the noncommutativity of scalar multiplication considerably complicates 
the theory.

We say that two sequences of vectors $(v_j)$ and $(w_j)$ in $\Hd$ are 
{\bf projectively unitarily equivalent} if there exists a unitary $U$ and unit norm scalars
$\ga_j$ with
$$ w_j = (Uv_j)\ga_j, \qquad\forall j. $$
Clearly, projective unitary equivalence is an equivalence relation. 
Moreover, one can define a {\bf projective unitary symmetry group} of $(v_j)_{j\in J}$
to be all the permutations $\gs:J\to J$ for which $(v_j)$ and $(v_{\gs j})$ are 
projectively unitarily equivalent (cf \cite{CW18}).

To make a workable theory, one now needs a way to recognise projective unitary equivalence.
In terms of the Gramians $V=[v_j]$ and $W=[w_j]$, the formal definition says that
$$ W^*W = C^*V^*U^* UVC= C^* V^*V C, \qquad C:=\diag(\ga_j), $$
i.e.,
\begin{equation}
\label{projunitequivGramian}
\inpro{w_j,w_k}=\overline{\ga_k}\inpro{v_j,v_k}\ga_j.
\end{equation}
This leads to a ``linear system'' $C (W^*W) = (V^*V) C$ in the scalars $\ga_j$. However,
due to the noncommutativity of the quaternions, this can not be solved by Gauss elimination,
unless one 
%first applies the complexification map $[\cdot]_\CC$ obtain a linear system over $\CC$. 
first converts it to a linear system over $\RR$ (in the coordinates of the $\ga_j$).
What is usually done in the real and complex cases is to consider a
collection of invariants: the $m$-products, which completely 
characterise  projective unitary equivalence \cite{CW16}. We now look at
the analogue of these (also see \cite{KMW19} for subspaces of $\Cd$).

For a sequence of vectors $(v_j)$ in $\Hd$ the {\bf $m$-products} are
$$ \gD(v_{j_1},v_{j_2},\ldots,v_{j_m})
:=
\inpro{v_{j_2},v_{j_1}}
\inpro{v_{j_3},v_{j_2}}
\inpro{v_{j_4},v_{j_3}}
\cdots \inpro{v_{j_1},v_{j_m}}\in\HH. $$
The $1$-products and $2$-products are clearly projective unitary invariants, since
$$ \gD(v_j)=\norm{v_j}^2, \qquad \gD(v_j,v_k)=|\inpro{v_j,v_k}|^2. $$
From these, we can define the {\bf frame graph} of $(v_j)$ to be the graph with
vertices $\{v_j\}$ and
an edge between $v_j$ and $v_k$ ($j\ne k$) if and only if 
$\inpro{v_j,v_k}\ne 0$.

Further, since
\begin{align*}
\gD\bigl((U &v_{j_1})\ga_{j_1},(Uv_{j_2})\ga_{j2},\ldots,(Uv_{j_m})\ga_{j_m}\bigr) \cr
&= \inpro{(Uv_{j_2})\ga_{j_2},(Uv_{j_1})\ga_{j_1}} 
\inpro{(Uv_{j_3})\ga_{j_3},(Uv_{j_2})\ga_{j_2}} \cdots \inpro{(Uv_{j_1})\ga_{j_1},(Uv_{j_m})\ga_{j_m}} \cr
&= 
\overline{\ga_{j_1}} \inpro{v_{j_2} ,v_{j_1} } \ga_{j_2}
\overline{\ga_{j_2}} \inpro{v_{j_3} ,v_{j_2} } \ga_{j_3}
\cdots 
\overline{\ga_{j_m}} \inpro{v_{j_1} ,v_{j_m} } \ga_{j_1}
\cr
&=\overline{\ga_{j_1}} \gD(v_{j_1},v_{j_2},\ldots,v_{j_m}) \ga_{j_1},
\end{align*}
the $m$-products are projective unitary invariants of $(v_j)$ up to congruence,
and real frames are characterised by having real $m$-products.
A quaternion $q$ is determined up to congruence by its real part $\Re(q)$
and its norm $|q|$, 
and so we
can define (reduced) $m$-products as a pair of real numbers
$$ \gD_r(v_{j_1},v_{j_2},\ldots,v_{j_m}):=(\Re(q),|q|), \qquad
q= \gD(v_{j_1},v_{j_2},\ldots,v_{j_m}). $$
These are projective unitary invariants. For the complex case, the $m$-products
are projective unitary invariants, which depend only the cycle $(j_1,\ldots,j_m)$,
and a small set of $m$-products corresponding to a basis for the cycle space of the
frame graph of $(v_j)$ provide a set of invariants which characterise projective
unitary equivalence (see \cite{CW16}). We can not yet make a similar claim in the
quaternionic case, though we do imagine that the $m$-products do characterise projective
unitary equivalence. 

The dependence of $m$-products on only the associated $m$-cycle
in the frame graph does follow, by the calculation
$$ a \gD(v_{j_1},v_{j_2},\ldots,v_{j_m}) a^{-1}
= \gD(v_{j_2},v_{j_3},\ldots,v_{j_m},v_{j_1}) , \qquad
a={\inpro{v_{j_1},v_{j_2}}\over|\inpro{v_{j_1},v_{j_2}}|}, $$
and so, in addition to the $1$-products and $2$-products,  
we need only consider the $m$-products for $m\ge3$ which correspond to 
$m$-cycles in the frame graph, i.e., are nonzero.
To check that the $m$-products for two sequences are equal (up to conjugation),
it suffices to consider only the $m$-products corresponding to a cycle basis 
for the cycle space of the (common) frame graph:

\begin{lemma} (Cycle decomposition) For $1\le k\le m$, $n\ge1$, we have
\begin{align*}
&\gD(v_k,v_{k+1},\ldots,v_m,v_1,\ldots,v_{k-1})\gD(v_k,\ldots,v_1,w_1,\ldots,w_n) \cr
&\qquad = |\inpro{v_1,v_2}|^2|\inpro{v_2,v_3}|^2\cdots|\inpro{v_{k-1}v_k}|^2
\gD(v_k,v_{k+1},\ldots,v_m,v_1,w_1,w_2,\ldots,w_n). 
\end{align*}
\end{lemma}

\begin{proof}
Expanding the left hand side gives
\begin{align*}
&\inpro{v_{k+1},v_k}\inpro{v_{k+2},v_{k+1}} \cdots \inpro{v_m,v_{m-1}}
\inpro{v_1,v_m}\inpro{v_2,v_1}\cdots\inpro{v_{k-1},v_{k-2}}\inpro{v_k,v_{k-1}} \cr
&\qquad\times \inpro{v_{k-1},v_k}\inpro{v_{k-2},v_{k-1}} \cdots \inpro{v_1,v_{2}}
\inpro{w_1,v_1}\inpro{w_2,w_1}\cdots\inpro{w_{n},w_{n-1}}\inpro{v_k,w_{n}}, 
\end{align*}
which simplifies to the right hand side, since 
$\inpro{v_{j+1},v_j}\inpro{v_j,v_{j+1}}=|\inpro{v_j,v_{j+1}}|^2\in\RR$ commutes with
any quaternion.
\end{proof}

This gives the following condition for projective unitary equivalence.

\begin{theorem}
A necessary condition for sequences $(v_j)$ and $(w_j)$ of $n$ vectors in $\Hd$ to be
projectively unitarily equivalent is that the $m$-products 
corresponding to a cycle basis for the frame graph are
are equal (up to conjugation).
\end{theorem}

In the complex setting, this says that the $m$-products are equal, and the
converse is proved by explicitly constructing scalars $\ga_j$
which satisfy (\ref{projunitequivGramian}). 
The difficulties in extending
this converse to the quaternionic setting include the fact that
for $w_j=(Uv_j)\ga_j$, 
\begin{equation}
\label{gaconstraint}
\gD(w_{j_1},w_{j_2},\ldots,w_{j_m}) 
= \overline{\ga_{j_1}} \gD(v_{j_1},v_{j_2},\ldots,v_{j_m}) \ga_{j_1},
\end{equation}
which puts further constraints on the $\ga_j$
(for $m\ge3$ and the $m$-product nonzero).
Indeed, in the complex setting one can assume that any $\ga_j$ is $1$, 
simply by replacing $U$ by the unitary matrix $\ga_j U$. 
Nevertheless, those parts of the theory that we do have allow us to 
investigate such things as the symmetries of lines, as our final example shows.

\begin{example} 
\label{sixlinesexample}
Consider the six tight equiangular lines in $\HH^2$
at angle $\gl=c^2={2\over5}$ of 
\cite{B20}
$$ 
v_1=\pmat{ 1\cr 0 }, \
v_2=\pmat{ {\sqrt{2}\over\sqrt{5}} \cr {\sqrt{3}\over\sqrt{5}} }, \
v_3=\pmat{ {\sqrt{2}\over\sqrt{5}} \cr -{\sqrt{3}\over4\sqrt{5}}+{3\over4}i }, \
v_4=\pmat{ {\sqrt{2}\over\sqrt{5}} \cr -{\sqrt{3}\over4\sqrt{5}}-{1\over4}i+{1\over\sqrt{2}}j },  $$
$$
v_5=\pmat{ {\sqrt{2}\over\sqrt{5}} \cr -{\sqrt{3}\over4\sqrt{5}}-{1\over4}i-{1\over2\sqrt{2}}j+{\sqrt{3}\over2\sqrt{2}}k }, \
v_6=\pmat{ {\sqrt{2}\over\sqrt{5}} \cr
-{\sqrt{3}\over4\sqrt{5}}-{1\over4}i-{1\over2\sqrt{2}}j-{\sqrt{3}\over2\sqrt{2}}k }, $$
which are said to have ``symmetry group'' $A_6$. 
The reduced $m$-products $\gD_r(v_{j_1},\ldots,v_{j_m})$ of distinct vectors 
for $m=1,2,3,4,6$ are all equal, 
taking the values
$$ (1,1), \quad ({2\over5},c^2), \quad ({1\over10},c^3), \quad (-{1\over50},c^4), 
\quad (-{11\over 250},c^6) $$
respectively, which puts no restriction on the possible projective symmetry group
of the lines. However, the reduced $5$-products (of distinct vectors) take two values
$$  (-{25 \pm 9\sqrt{5}\over 500},c^5), $$
and the permutations of the vectors which preserve these
$5$-products is indeed $A_6$. 
With the present theory, this does not yet establish that $A_6$ is the 
projective symmetry group.

We now seek a corresponding projective unitary symmetry for each $\gs\in A_6$, 
i.e., a unitary 
matrix $U_\gs$ and corresponding scalars $\ga_j$ (also depending on 
$\gs$) for which 
$$ w_j=v_{\gs j}=(U_\gs v_j)\ga_j, \qquad\forall j. $$
Once the unit scalars $\ga_j$ corresponding to a basis $[v_j]_{j\in J}$ of vectors 
from $(v_j)$ are known, the matrix $U_\ga$ is uniquely determined by
$$ U_\gs [v_j\ga_j]_{j\in J}=[v_{\gs j}]_{j\in J}
\Implies U_\gs =[v_{\gs j}]_{j\in J} [v_j\ga_j]_{j\in J}^{-1}, $$
and it can then be checked whether or not the $U_\gs$ is unitary and permutes the other lines.
By (\ref{gaconstraint}),
for $j,k,\ell$ distinct, the unit scalar $\ga_j$ satisfies 
$$ \ga_j\gD(v_{\gs j},v_{\gs k},v_{\gs\ell})=\gD(v_j,v_k,v_\ell)\ga_j, $$
which gives a homogeneous linear system of four equations for the 
four real coordinates of $\ga_j$. In the cases considered, this had 
a unique solution of unit norm up to a choice of sign, which was 
made in order to obtain a unitary matrix $U_\gs$.
For the generators 
$$ a=(1 2)(3 4) \quad\hbox{(order $2$)}, \qquad
b=(1 2 3 5)(4 6) \quad\hbox{(order $4$)} $$
%$$\hbox{$a=(1 2)(3 4)$ (order $2$) and $b=(1 2 3 5)(4 6)$ (order $4$)} $$
for $A_6$, we obtained
$$ \ga_1=\ga_2=-{\sqrt{2}\over\sqrt{3}}i+{1\over\sqrt{3}}k, 
\qquad U_a = \pmat{
{2\over\sqrt{15}}i-{\sqrt{2}\over\sqrt{15}}j & {\sqrt{2}\over\sqrt{5}}i -{1\over\sqrt{5}}j \cr
{\sqrt{2}\over\sqrt{5}}i -{1\over\sqrt{5}}j
& 
-{2\over\sqrt{15}}i+{\sqrt{2}\over\sqrt{15}}j 
 }, \quad U_a^2=-I, $$ 
and
%for the second ($\ga_2$ needed to be multiplied by $-1$ in the calculation)
$$ \ga_1= {1\over2\sqrt{2}}+{\sqrt{5}\over2\sqrt{6}}i - {3-\sqrt{5}\over4\sqrt{3}}j
-{\sqrt{5}+1\over4}k,\quad
\ga_2={\sqrt{5}\over 4} +{1\over4\sqrt{3}}i-{3\sqrt{5}+1\over4\sqrt{6}}j 
-{\sqrt{5}-1\over4\sqrt{2}}, $$
$$ U_b = \pmat{
{1\over2\sqrt{5}}+{1\over2\sqrt{3}}i+{3-\sqrt{5}\over2\sqrt{30}}j+{\sqrt{5}+1\over2\sqrt{10}}k 
& {\sqrt{3}\over2\sqrt{10}}-{1\over2\sqrt{2}}i+{3+\sqrt{5}\over4\sqrt{5}}j 
-{\sqrt{3}\over5+\sqrt{5}} k \cr 
{\sqrt{3}\over2\sqrt{10}}+{1\over2\sqrt{2}}i +{3-\sqrt{5}\over4\sqrt{5}}j 
+{\sqrt{3}\over5-\sqrt{5}}k &
-{1\over2\sqrt{5}}+{1\over2\sqrt{3}}i-{3\sqrt{5}+5\over10\sqrt{6}}j
+{\sqrt{5}-1\over2\sqrt{10}}k }, \quad U_b^4=-I. $$
These unitary matrices $U_a$ and $U_b$ do give the projective unitary symmetries
supposed. Moreover, they generate the double cover $2\cdot A_6$ of $A_6$,
and so we have verified that $A_6$ is indeed the projective symmetry group
of the six equiangular lines in $\HH^2$. We note that our method did not
require prior knowledge of what the symmetry group was.
\end{example}

The action group of the faithful representation of $2\cdot A_6$
obtained in Example \ref{sixlinesexample} contains $40$ reflections 
(of order $3$), and it is an irreducible reflection group which 
appears on the list of \cite{C80}. Moreover, the vectors giving the lines
are eigenvectors of nontrivial elements of the group, and so the 
six equiangular lines in $\HH^2$ can be constructed directly 
from the reflection group as a group frame (or even from the abstract group $2\cdot A_6$).

%The $1$-products and $2$-products are
%$$ \gD(v_j)=1, \qquad \gD(v_j,v_k)={2\over5}, \quad j\ne k, $$
%and the reduced $3$-products and $4$-products are
%$$ \gD(v_{j_1},v_{j_2},v_{j_3})= ({1\over10},\gl^3), \qquad  $$
%For the six equiangular lines, the $m$-products for $m=1,2,3,4$ are
%$$\{1,{2\over5},{1\over10},-{1\over50}\}$$
%respectively, then for $m=5$
%$$ - {25 \pm 9\sqrt{5}\over 500} \approx -0.00975077641, -0.09024922359. $$
%<S-F6>Finally $m=6$ is $$ -{11\over 250}. $$
% For the six equiangular lines, the $m$-products for $m=1,2,3,4$ are

The sets of five and six equiangular lines in $\HH^2$ were first calculated
in \cite{K08} using the Hopf map. Though this technique does not
immediately generalise to other dimensions, like that of \cite{B20},
we recount the essential details, as it sheds further light on the geometry
of these lines.
The {\bf Hopf map} $\psi$ maps a point $\vec{a}=(a_1,\ldots,a_5)$
on the unit sphere in $\RR^5$ to a line in the projective space $\HH\PP^1$,
i.e., a the unit vector $v\in\HH^2$ in the line with $v_2\ge0$, and is
given by $\psi(0,0,0,0,1):=(1,0)$ and
$$ \psi(\vec{a}) := \pmat{ {a\over\sqrt{2(1-a_5)}}\cr{\sqrt{1-a_5}\over\sqrt{2}}}, 
\qquad a:=a_1+a_2i+a_3j+a_4k, \quad a_5\ne1.  $$
A calculation shows that
\begin{align}
\label{Hopfinpro}
|\inpro{\psi(\vec{a}),\psi(\vec{b})}_\HH|^2
%&=\psi(\vec{a})^*\psi(\vec{b})\psi(\vec{b})^*\psi(\vec{a}) \cr
%&= \Bigl({a^*b\over2\sqrt{1-a_5}\sqrt{1-b_5}}
%+{\sqrt{1-a_5}\sqrt{1-b_5}\over 2} \Bigr) \cr
%& = {a^*bb^*a\over 4(1-a_5)(1-b_5)}+{a^*b+b^*a\over4}  +{(1-a_5)(1-b_5)\over4} \cr
%&= { (1-a_5^2)(1-b_5^2)\over 4(1-a_5)(1-b_5)}+
%{\inpro{[a],[b]}_\RR\over 2} + {(1-a_5)(1-b_5)\over4} \cr
&= {1+\inpro{\vec{a},\vec{b}}_\RR\over 2}, \qquad\forall \vec{a},\vec{b},
\end{align}
so the $n\ge 3$ unit vectors $(v_j)$, $v_j=\psi(\vec{v_j})\in\HH^2$, 
give tight equiangular lines if and only if 
$$ |\inpro{v_j,v_k}|^2 ={1+\inpro{\vec{v_j},\vec{v_k}}\over 2} 
={n-2\over2(n-1)}
\Iff \inpro{\vec{v_j},\vec{v_k}}=-{1\over n-1}. $$
This latter condition says that the vectors $(\vec{v_j})$ are the vertices of a
regular $n$-vertex simplex embedded in the unit sphere in $\RR^5$, 
which can be done for $n=3,4,5,6$, with the corresponding image $(v_j)$ giving 
$n$ tight equiangular lines in $\HH^2$. 
Moreover, for $n=3$ we get real lines
by choosing the simplex in $\{x:x=(x_1,0,0,0,x_5)\}$,
and complex lines for $n=4$
by choosing the simplex in $\{x:x=(x_1,x_2,0,0,x_5)\}$.

\subsection{Concluding remarks}

We have shown how much of the theory of tight frames extends to quaternionic 
Hilbert space, with the characterisation of projective unitary equivalence
of frames being the aspect that most depends intrinsically on the commutativity 
of the complex numbers. The notions of canonical coordinates and the canonical
Gramian \cite{W18} also extend to $\HH$-vector spaces. In particular,
there is a unique $\HH$-inner product for which a finite spanning set for 
an $\HH$-vector space becomes a normalised tight frame.

Our focus has been on group frames and equiangular lines. The maximal set
of six equiangular lines in $\HH^2$ comes as the orbit of a quaternionic
reflection group, just as the SIC of four equiangular lines in $\CC^2$ is
the orbit of a complex reflection group. However, 
the known SICs in $\Cd$ (with one exception) are orbits of the Weyl-Heisenberg group,
which is not a reflection group for $d\ge3$. 
The key to constructing quaternionic
equiangular lines will be knowing ``the right group''. 
This group might come from numerical
constructions, using the techniques of this last section, 
or from the theory of group representations over $\HH$ (which is in
its infancy). The construction of sets of tight quaternionic lines may
also offer insight into Zauner's conjecture.
Another direction of similar interest is that of optimal packings
in quaternionic projective space $\HH\PP^k$.

\bibliographystyle{alpha}
\bibliography{refs-sixlines}
\nocite{*}

%\begin{thebibliography}{99}

%\bibitem{LO91}
%H. Liebeck and A. Osborne, 
%The generation of all rational orthogonal matrices,
%{\it Amer.\ Math.\ Monthly} 
%{\bf 98}
%(1991), 
%131--133. 

%\end{thebibliography}

\end{document}

\vfil\eject

\section{More on the $m$-products}

We want to show that the $m$-products determine a sequence of vectors
$V=[v_1,\ldots,v_n]$ in $\Hd$ up to projective unitary equivalence. 
The basic idea is to consider all the possible Gramians, 
and amongst them find a ``canonical form'' given by some $w_j=Uv_j\ga_j$. 
It suffices to assume that the frame graph $\gG$ is connected. 
We think of the scalars $(\ga_j)$ as variables
%corresponding to the possible $(w_j)$, $w_j=U v_j\ga_j$, 
with the possible Gramians given by $(UVC)^*UVC=C^*V^*VC$, $C=\diag(\ga_j)$.

Choose a spanning tree $\cT$ for $\gG$ (there could be many choices),
with a root $v_r$ (this can be any point). 
For a given value of $\ga_r$ (corresponding to the root
of the spanning tree), there is a unique
choice of the remaining scalars for which all the inner products 
corresponding to edges $\{v_j,v_k\}$ in the spanning tree become positive,
i.e., $\inpro{w_j,w_k}=|\inpro{v_j,v_k}|$. 
Indeed, if $(v_r,v_{j_2},\ldots,v_{j_m})$, $j_1=r$, is
a path starting at the root, then 
$$ 
\inpro{ \ga_{j_k} v_{j_k}, \ga_{j_{k-1}} v_{j_{k-1}}}=
\overline{\ga_{j_{k-1}}}\inpro{v_{j_k},v_{j_{k-1}}}\ga_{j_k} 
= |\inpro{v_{j_{k}},v_{j_{k-1}}}|
\Implies \ga_{j_k} = {\inpro{v_{j_{k-1},v_{j_k}}}\over |\inpro{v_{j_{k-1}},v_{j_k}}|} \ga_{j_{k-1}}, $$
so that
$$ \ga_{j_a} = 
{\inpro{v_{j_{a-1},v_{j_a}}}\over |\inpro{v_{j_{a-1}},v_{j_a}}|} 
{\inpro{v_{j_{a-2},v_{j_{a-1}}}}\over |\inpro{v_{j_{a-2}},v_{j_{a-1}}}|} 
\cdots
{\inpro{v_{j_{2},v_{j_{3}}}}\over |\inpro{v_{j_{2}},v_{j_{3}}}|} 
{\inpro{v_r,v_{j_{2}}}\over |\inpro{v_r,v_{j_{2}}}|} 
\ga_r. $$
We now consider the entries of the Gramian corresponding edges in $\gG\setminus\cT$,
i.e., cycle completions. Let $\{v_{j_a},v_{k_b}\}$ be such an edge, with
$(v_{j_1},\ldots,v_{j_a})$ and $(v_{k_1},\ldots,v_{j_b})$ paths
from the root $v_r=v_{j_1}=v_{k_1}$ to $v_{j_a}$ and $v_{k_b}$, respectively.
Then the $(j_a,k_b)$-entry of the Gramian is
\begin{align*}
\overline{\ga_{j_a}}\inpro{v_{k_b},v_{j_a}}\ga_{k_b}
&
= \overline{\ga_r} 
{\inpro{v_{j_2},v_r}\inpro{v_{j_3},v_{j_2}}
\cdots \inpro{v_{j_{a-1}},v_{j_{a-2}}} \inpro{v_{j_a},v_{j_{a-1}}} \over
|\inpro{v_{j_2},v_r}\inpro{v_{j_3},v_{j_2}}
\cdots \inpro{v_{j_{a-1}},v_{j_{a-2}}} \inpro{v_{j_a},v_{j_{a-1}}}|}
 \inpro{v_{k_b},v_{j_a}} \cr
& \qquad\times 
{\inpro{v_{k_{b-1}},v_{k_b}}\inpro{v_{k_{b-2}},v_{k_{b-1}}}
\cdots \inpro{v_{k_2},v_{k_3}}\inpro{v_r,v_{j_2}} \over
|\inpro{v_{k_{b-1}},v_{k_b}}\inpro{v_{k_{b-2}},v_{k_{b-1}}}
\cdots \inpro{v_{k_2},v_{k_3}}\inpro{v_r,v_{j_2}}|}
\ga_r
\cr
& =\overline{\ga_r} 
{\gD(v_r,v_{j_2},v_{j_3},\ldots,v_{j_a},v_{k_b},\ldots,v_{k_3},v_{k_2})\over
|\gD(v_r,v_{j_2},v_{j_3},\ldots,v_{j_a},v_{k_b},\ldots,v_{k_3},v_{k_2})|}
|\inpro{v_{k_b},v_{j_a}}|
\ga_r. 
\end{align*}
Let $v_{j_c}$ be the point to which the paths intersect,
i.e., $v_{j_c}=v_{k_c}$ and $v_{j_{c+1}}\ne v_{k_{c+1}}$.
%Suppose that $v_{j_c}=v_{k_c}$ and $v_{j_{c+1}}\ne v_{k_{c+1}}$.
We will call the circuit 
$(v_r,v_{j_2},v_{j_3},\ldots,v_{j_a},v_{k_b},\ldots,v_{k_3},v_{k_2})$,
starting at $v_r$, which gives %corresponding to the 
 the above $m$-product the {\bf lollipop} with {\bf stick} 
$(v_{j_1},\ldots,v_{j_{c}})$ %from $\cT$ 
and cycle $(v_{j_c},\ldots,v_{j_a},v_{k_b},\ldots,v_{k_{c+1}})$
given by the edge $\{v_{j_a},v_{k_b}\}\in\gG\setminus\cT$.
There are technically two lollipops corresponding a given edge
(depending on direction the edge is travelled), and the 
corresponding $m$-products are the conjugates of each other, since
\begin{align*}
\overline{\gD(v_{j_1},v_{j_2},\ldots,v_{j_m})}
&=\overline{\inpro{v_{j_2},v_{j_1}}\inpro{v_{j_3},v_{j_2}}\cdots\inpro{v_{j_1},v_{j_m}}} \cr
&=\inpro{v_{j_m},v_{j_1}}\cdots\inpro{v_{j_2},v_{j_3}}\inpro{v_{j_1},v_{j_2}}
=\gD(v_{j_1},v_{j_m},\ldots,v_{j_2}).
\end{align*}
We note that
a given spanning tree determines a set of lollipops for which
the cycles are a basis for the cycle space.

We have the following result.

\begin{theorem} A finite sequence of vectors $V=[v_1,\ldots,v_n]$ 
in $\Hd$ with frame graph $\gG$ is determined up to projective unitary equivalence by
\begin{enumerate}[(i)]
\item The $1$-products and $2$-products.
\item The $m$-products 
(starting at a designated root of each tree) 
corresponding to the $|\gG\setminus\cT|$ lollipops given by
a spanning forest $\cT$ for the frame graph $\gG$, 
up to a simultaneous conjugation of those
corresponding to the same tree in the forest.
%which are 
%kare simultaneously conjugate %up to a simultaneous conjugation 
%(for those corresponding to each tree).
\end{enumerate}
In particular, the $m$-products determine $V$ up to projective unitary equivalence.
\end{theorem}

Note that above the $m$-products are to start at the designated root for each tree.
To see whether two sequences are projectively equivalent, 
one needs to check that the $1$-products and $2$-products
are equal (and hence so are the frame graphs), and for each component of the frame graph 
that the $m$-products for the lollipops are equal up to a simultaneous conjugation
(which boils done to seeing if a homogeneous linear system in the 
coordinates of $\ga_r$ has a nontrivial solution).

\begin{example}
The frame is real if and only if the $m$-products for the lollipops are
real. The frame is complex if and only if the $m$-products for the lollipops
can be conjugated to be all complex.
The frame is in $\Cd$, then $m$-products for the lollipops reduce to those
for the cycle, which in turn only depends on the cycle (and not the start point).
\end{example}

\begin{example}
MUBs. Every three vectors gives a complex set of lines
(there is one triple product, and it can be conjugated to 
a complex number), and these must be in $\RR$ if they are tight
and equiangular.
\end{example}

\begin{example} Consider three equiangular lines $V=[v_1,v_2,v_3]$
at angle $c<1$. For the spanning tree $(v_2,v_1,v_3)$, the Gramian has
the normalised form
$$ 
\Gram(V,\cT,r)=
\pmat{1&c&c\cr c&1&cq\cr c&c\overline{q} &1}, \qquad q\in\HH,\quad |q|=1, $$
which is unique up to conjugation by a scalar matrix, and so we
can suppose wlog that $q\in\CC$.
%Since $\RR(q)$ is
%commutative, we may find row echelon form in the usual way to determine 
From the row echelon form,  
we see that this Gramian has rank $2$ if and only if
$$ 1-c^2-{c\overline{q}-c^2\over1-c^2} (c{q}-c^2)=0 
\Iff \Re(q)={3c^2-1\over 2c^3}.  $$
%\Iff (1-c^2)^2 =c^4-2c^2+1 = c^2-c^3(q+\overline{q})+c^4=(c\overline{q}-c^2)(cq-c^2)
Now
$$ -1\le {3c^2-1\over 2c^3}\le 1 \Implies c\ge {1\over2}.  $$
Hence we may take
$$ q= {3c^2-1\over 2c^3} + {\sqrt{4c^2-1}(1-c^2)\over 2c^3} i, 
\qquad {1\over2}\le c<1, $$
and a calculation shows the corresponding Gramian is positive semidefinite.
Hence there is a unique configuration of the three equiangular lines in $\HH^2$  
at angle ${1\over2}\le c<1$, which are complex equiangular lines, except 
for $c={1\over2}$ when they are real tight equiangular lines.
\end{example}

 Consider points $V=[v_1,v_2,v_3,v_4]$, with none orthogonal.
We want to understand all possible Gramians for equivalent points. These are 
given by $C^* V^*V C$, the Gramian of $(v_j\ga_j)$, where $C=\diag(\ga_j)$.

$$ C^*V^*VC
= \pmat{
\inpro{v_1,v_1} & 
\overline{\ga_1}\inpro{v_2,v_1}\ga_2 &
\overline{\ga_1}\inpro{v_3,v_1}\ga_3 &
\overline{\ga_1}\inpro{v_4,v_1}\ga_4 \cr
\overline{\ga_2}\inpro{v_1,v_2}\ga_1 & 
\inpro{v_2,v_2} &
\overline{\ga_2}\inpro{v_3,v_2}\ga_3 &
\overline{\ga_2}\inpro{v_4,v_2}\ga_4 \cr
\overline{\ga_3}\inpro{v_1,v_3}\ga_1 & 
\overline{\ga_3}\inpro{v_2,v_3}\ga_2 &
\inpro{v_3,v_3} &
\overline{\ga_3}\inpro{v_4,v_3}\ga_4 \cr
\overline{\ga_4}\inpro{v_1,v_4}\ga_1 & 
\overline{\ga_4}\inpro{v_2,v_4}\ga_2 &
\overline{\ga_4}\inpro{v_3,v_4}\ga_3 &
\inpro{v_4,v_4}  }
$$

\vfil\eject

\section{Quaternionic $(t,t)$-designs}

A calculation of the integral of $|q_1|^{2t}$ suggests
$$ c_2(\Rd)={3\over d(d+2)},\qquad
c_2(\Cd)={2\over d(d+1)},\qquad
c_2(\Hd)={3\over d(2d+1)}. $$
And that the six equiangular lines in $\HH^2$ are a quaternionic 
$(2,2)$-design:
$$ \sum_j\sum_k |\inpro{v_j,v_k}|^4 
= 6+(36-6)\left({\sqrt{2}\over\sqrt{5}}\right)^4
= {3\over 2\cdot 5}6^2 = \left(\sum_j\norm{v_j}^4\right)^2. $$
This seems to be true for a set of tight equiangular lines meeting the bound
$n\le d+{m\over2}(d^2-d)$, with $\gl={n-d\over d(n-1)}$ via
$$ n-(n^2-n)\gl^2 =  {(m+2)(md-m+2)^2 d\over 4(md+2)}
= {m(m+2)\over md(md+2)} n^2. $$

$$ c_2(\Fd) = {m\cdot (m+2)\over md\cdot(md+2)}. $$
$$ c_t(\Fd) = {m\cdot (m+2)\cdots(m+2t-2)\over md\cdot(md+2)\cdots(md+2t-2)}. $$

We have
$$ \dim(\cH_4(\RR^4))= 25, 
\qquad \dim(\cH_2(\CC^2)= $$

\begin{align*}
c_t(\Fd) 
&= \int_{\SS(\RR^{md})} (x_1^2+\cdots+x_m^2)^t\,d\gs(x)
= \int_{\SS(\RR^{md})} \sum_{|\ga|=t\atop\ga\in\ZZ_+^m}  {t\choose\ga} x^{2\ga}
\,d\gs(x) \cr
& = \sum_{|\ga|=t\atop\ga\in\ZZ_+^m}  
{t\choose\ga} {({1\over2})_\ga\over({md\over2})_{|\ga|} }
= {({m\over2})_t\over({md\over2})_t} 
\end{align*}

Try and prove this by induction. Let $\ga=\gb+e_j$
$$ c_t(\Fd) 
= \sum_j\sum_{|\gb|=t-1\atop\gb\in\ZZ_+^m}  
{t\choose\gb+e_j} {({1\over2})_{\gb+e_j}\over({md\over2})_{|\gb+e_j|} }
= \sum_j\sum_{|\gb|=t-1\atop\gb\in\ZZ_+^m}  
{t-1\choose\gb} {t\over(\gb_j+1)} 
{({1\over2})_{\gb}({1\over2}+\gb_j)\over({md\over2})_{|\gb|}({md\over2}+t-1) }
$$

%Lets do $t=2$, $\ga=(2,0,\ldots),(1,1,0,\ldots)$
%$$ { m {1\over2}{3\over 2}+2 {1\over2}m(m-1)({1\over2})^2
%\over {md\over2}{md+1\over2} }
%= { m(m+2)\over md(md+2)} $$

Now consider $\Hd$ (or $\Fd$ for that matter) as an
$\HH$-bimodule. Then the tensor product can be defined by 
taking a basis, say $(e_j)$, with
$$ (\sum_j \ga_j e_j)\tensor (\sum_k e_k \gb_k)
= \sum_j\sum_k 
\ga_j (e_j\tensor e_k) \gb_k. $$

For a general basis $(v_j)$, we have 
$(\ga v_j)\tensor(v_k\gb)=\ga (v_j\tensor e_k)\gb$,
and would like
$$ (\ga e_j)\tensor(e_k\gb)
=\ga\gb (e_j\tensor e_k) = (e_j\tensor e_k) \ga\gb, $$
which is not true (in general).

For the standard basis, we have
$$ x=\sum_j e_j x_j = \sum_j x_j e_j, $$
which allows
$$ x\tensor y 
=\sum_j\sum_k  x_j y_k (e_j\tensor e_k)
=\sum_j\sum_k  (e_j\tensor e_k) x_j y_k  , $$

Define an inner product by
$$ \inpro{e_{j_1}\tensor\cdots e_{j_m},
e_{k_1}\tensor\cdots\tensor e_{k_m}}
:= \inpro{e_{j_1},e_{k_1}}\cdots\inpro{e_{j_m},e_{k_m}}. $$

$$ \inpro{(v_1\tensor\cdots\tensor v_m)\ga,w_1\tensor\cdots\tensor w_m}
=\inpro{v_1,w_1}\cdots\inpro{v_{m-1},w_{m-1}}
\inpro{v_m\ga,w_m}, $$
$$ =\inpro{v_1,w_1}\cdots\inpro{v_{m-1},w_{m-1}}
\inpro{v_m,w_m}\ga
= \inpro{v_1\tensor\cdots\tensor v_m,w_1\tensor\cdots\tensor w_m}\ga$$

\begin{lemma}
$$ \inpro{v_1\tensor\cdots\tensor v_m,w_1\tensor\cdots\tensor w_m}
=\inpro{v_1,w_1}\inpro{v_2,w_2}\cdots\inpro{v_m,w_m}. $$
\end{lemma}

\begin{proof} We use strong induction on $m$. The case $m=1$ is immediate. 
We observe that
\begin{align*}
& \inpro{v_1\tensor\cdots\tensor v_m,w_1\tensor\cdots\tensor w_m} \cr
& \qquad = \inpro{\sum_{j_1}v_{1,j_1}e_{j_1}\tensor\cdots\tensor\sum_{j_m} 
v_{m,j_m}e_{j_m},\sum_{k_1}w_{1,k_1}e_{k_1}\tensor\cdots\tensor\sum_{k_m} w_{m,k_m}e_{{k_m} } } \cr
& \qquad = \sum_{j_1,\ldots,j_m} \sum_{k_1,\ldots,k_m}
\inpro{ (e_{j_1}\tensor\cdots\tensor e_{j_m}) v_{1,j_1}\cdots v_{m,j_m},
(e_{k_1}\tensor\cdots\tensor e_{k_m}) w_{1,k_1}\cdots w_{m,k_m} } \cr
& \qquad = \sum_{j_1,\ldots,j_m} \sum_{k_1,\ldots,k_m}
\overline{w_{1,k_1}\cdots w_{m,k_m}} \inpro{ (e_{j_1}\tensor\cdots\tensor e_{j_m}), 
(e_{k_1}\tensor\cdots\tensor e_{k_m}) } v_{1,j_1}\cdots v_{m,j_m} \cr
& \qquad = \sum_{j_1,\ldots,j_m} \sum_{k_1,\ldots,k_m}
\overline{w_{m,k_m}} \cdots \overline{w_{1,k_1}} 
\inpro{e_{j_1},e_{k_1}}\cdots \inpro{e_{j_m},e_{k_m}} 
v_{1,j_1}\cdots v_{m,j_m}.
\end{align*}
By the inductive hypothesis and the fact $\inpro{e_j,e_k}\in\RR$, we have
\begin{align*}
& \inpro{v_1\tensor\cdots\tensor v_{m+1},w_1\tensor\cdots\tensor w_{m+1}} \cr
& \qquad = \sum_{j_1,\ldots,j_{m+1}} \sum_{k_1,\ldots,k_{m+1}}
\overline{w_{{m+1},k_{m+1}}} \Bigl(\overline{w_{m,k_m}} \cdots \overline{w_{1,k_1}} 
\inpro{e_{j_1},e_{k_1}}\cdots \inpro{e_{j_m},e_{k_m}} v_{1,j_1}\cdots v_{m,j_m}\Bigr)
\inpro{e_{j_{m+1}},e_{k_{m+1}}} v_{m+1,j_{m+1}} \cr
& \qquad = \sum_{j_{m+1}} \sum_{k_{m+1}} \overline{w_{{m+1},k_{m+1}}} 
\inpro{v_1\tensor\cdots\tensor v_m,w_1\tensor\cdots\tensor w_m} 
\inpro{e_{j_{m+1}},e_{k_{m+1}}} v_{m+1,j_{m+1}} \cr
& \qquad = \sum_{k_{m+1}} \overline{w_{{m+1},k_{m+1}}} 
\inpro{v_1\tensor\cdots\tensor v_m,w_1\tensor\cdots\tensor w_m} 
\inpro{ \sum_{j_{m+1}} e_{j_{m+1}} v_{m+1,j_{m+1}} ,e_{k_{m+1}}} \cr
& \qquad = \sum_{k_{m+1}} \overline{w_{{m+1},k_{m+1}}} 
\inpro{v_1\tensor\cdots\tensor v_m,w_1\tensor\cdots\tensor w_m} 
\inpro{ v_{m+1} ,e_{k_{m+1}}} \cr
& \qquad = \sum_{k_{m+1}} \inpro{ v_{m+1} ,e_{k_{m+1}}
\overline{\inpro{v_1\tensor\cdots\tensor v_m,w_1\tensor\cdots\tensor w_m} }
w_{{m+1},k_{m+1}} } \cr
& \qquad = \inpro{ v_{m+1} , 
\overline{\inpro{v_1\tensor\cdots\tensor v_m,w_1\tensor\cdots\tensor w_m} }
\sum_{k_{m+1}} e_{k_{m+1}} w_{{m+1},k_{m+1}} } \cr
& \qquad = \inpro{ v_{m+1} , 
\overline{\inpro{v_1\tensor\cdots\tensor v_m,w_1\tensor\cdots\tensor w_m} }
w_{m+1} } \cr
& \qquad = \inpro{v_1\tensor\cdots\tensor v_m,w_1\tensor\cdots\tensor w_m}
\inpro{ v_{m+1} , w_{m+1} } \cr
\end{align*}
\end{proof}

\begin{align*}
\inpro{v_1\tensor\cdots\tensor v_m,w_1\tensor\cdots\tensor w_m}
&= \inpro{\sum_{j_1}v_{1,j_1}e_{j_1}\tensor\cdots\tensor\sum_{j_m} 
v_{m,j_m}e_{j_m},\sum_{k_1}w_{1,k_1}e_{k_1}\tensor\cdots\tensor\sum_{k_m} w_{m,k_m}e_{{k_m} } } \cr
&= \sum_{j_1,\ldots,j_m} \sum_{k_1,\ldots,k_m}
\inpro{ (e_{j_1}\tensor\cdots\tensor e_{j_m}) v_{1,j_1}\cdots v_{m,j_m},
(e_{k_1}\tensor\cdots\tensor e_{k_m}) w_{1,k_1}\cdots w_{m,k_m} } \cr
&= \sum_{j_1,\ldots,j_m} \sum_{k_1,\ldots,k_m}
\overline{w_{1,k_1}\cdots w_{m,k_m}} \inpro{ (e_{j_1}\tensor\cdots\tensor e_{j_m}), 
(e_{k_1}\tensor\cdots\tensor e_{k_m}) } v_{1,j_1}\cdots v_{m,j_m} \cr
&= \sum_{j_1,\ldots,j_m} \sum_{k_1,\ldots,k_m} \overline{w_{1,k_1}\cdots w_{m,k_m}}
\inpro{e_{j_1},e_{k_1}} \cdots \inpro{e_{j_m},e_{k_m}} v_{1,j_1}\cdots v_{m,j_m} \cr
&= \sum_{k_1,\ldots,k_m} \overline{w_{1,k_1}\cdots w_{m,k_m}} 
\inpro{\sum_{j_1}e_{j_1} v_{1,j_1} ,e_{k_1}} \cdots 
\inpro{\sum_{j_m}e_{j_m} v_{m,j_m} ,e_{k_m}} \cr
&= \sum_{k_1,\ldots,k_m} \overline{w_{1,k_1}\cdots w_{m,k_m}} 
\inpro{v_1,e_{k_1}} \cdots \inpro{v_m ,e_{k_m}} \cr
&= \sum_{k_1,\ldots,k_m} \overline{w_{m,k_m}}\cdots \overline{w_{2,k_2}}
(\overline{w_{1,k_1}} \inpro{v_1,e_{k_1}}) \inpro{v_2,e_{k_2}} \cdots \inpro{v_m ,e_{k_m}} \cr
&= \sum_{k_1,\ldots,k_m} \overline{w_{m,k_m}}\cdots \overline{w_{2,k_2}}
( \inpro{v_1,e_{k_1} w_{1,k_1} }) \inpro{v_2,e_{k_2}} \cdots \inpro{v_m ,e_{k_m}} \cr
&= \sum_{k_2,\ldots,k_m} \overline{w_{m,k_m}}\cdots \overline{w_{2,k_2}}
( \inpro{v_1,w_1}) \inpro{v_2,e_{k_2}} \cdots \inpro{v_m ,e_{k_m}} \cr
&= \sum_{k_2,\ldots,k_m} \overline{w_{m,k_m}}\cdots \overline{w_{3,k_3}}
\inpro{v_2, \overline{\inpro{v_1,w_1}} e_{k_2} w_{2,k_2} } \inpro{v_3,e_{k_3}} 
\cdots \inpro{v_m ,e_{k_m}} \cr
&= \sum_{k_3,\ldots,k_m} \overline{w_{m,k_m}}\cdots \overline{w_{3,k_3}}
\inpro{v_2, \overline{\inpro{v_1,w_1}} w_2} \inpro{v_3,e_{k_3}} 
\cdots \inpro{v_m ,e_{k_m}} \cr
&= \sum_{k_3,\ldots,k_m} \overline{w_{m,k_m}}\cdots \overline{w_{4,k_4}} \inpro{v_3,
\overline{\inpro{v_1,w_1} \inpro{v_2,w_2}} e_{k_3} w_{3,k_3} } \inpro{v_4,e_{k_4}} \cdots \inpro{v_m ,e_{k_m}} \cr
&= \sum_{k_3,\ldots,k_m} \overline{w_{m,k_m}}\cdots \overline{w_{4,k_4}} \inpro{v_3,
\overline{\inpro{v_1,w_1} \inpro{v_2,w_2}} w_3 } \inpro{v_4,e_{k_4}}
 \cdots \inpro{v_m ,e_{k_m}} \cr
\end{align*}

qqqqqqqqqqqqqqqqqq

\begin{example} The standard orthonormal basis and $(1,\pm e_r)$ giving
$n=2m+2$ vectors ($m=1,2,4,8$). The LHS is
$$ (2m+2)\cdot 1^t + \bigl((2m+2)^2-2(2m+2)\bigr)\cdot\Bigl({1\over2}\Bigr)^t   + (2m+2)\cdot 0^t
= 2m+2+{4\over 2^t}m(m+1), $$
and the RHS is
$$ c_t(\FF^2) (2m+2)^2
= { m(m+2)\cdots(m+2t-2)\over md(md+2)\cdots (md+2t-2)} (2m+2)^2, $$
and these are seen to be equal for $t=1,2,3$ (and all values for $m$),
giving $(3,3)$-designs.
\end{example}

Given a left quaternionic Hilbert space $\cH$, we define its 
conjugate $\overline{\cH}$ to be the right quaternionic Hilbert space
with elements $\overline{v}$, $v\in\cH$, with addition, scalar multiplication
and inner product defined by
$$ \overline{v}+\overline{w}:=\overline{v+w},\qquad
\ga\overline{v}:=\overline{v\overline{\ga}}, \qquad
\inpro{\overline{v},\overline{w}}:=\overline{\inpro{v,w}}. $$
Define a tensor $\xi\in\Sym^t(\cH)\tensor\Sym^t(\overline{\cH})$ by
$$ \xi := \int_\SS x^{\tensor t}\tensor\overline{x}^{\tensor t}\,d\gs(x)
-{1\over C}\sum_j 
v_j^{\tensor t}\tensor\overline{v_j}^{\tensor t}, $$
Since 
$$ \inpro{x^{\tensor t}\tensor\overline{x}^{\tensor t},
y^{\tensor t}\tensor\overline{y}^{\tensor t}}
=|\inpro{x,y}|^{2t}, $$
we have
\begin{align*}
\inpro{\xi,\xi}
&= \int_\SS\int_\SS |\inpro{x,y}|^{2t}\, d\gs(x)\,d\gs(y)
-{2\over C} \int_\SS\sum_j |\inpro{x,v_j}|^{2t} \,d\gs(x)
+{1\over C^2}\sum_j\sum_k |\inpro{v_j,v_k}|^{2t}\cr
&= c_t(\Fd) -{2\over C} c_t(\Fd) \sum_j \norm{v_j}^{2t}
+{1\over C^2}\sum_j\sum_k |\inpro{v_j,v_k}|^{2t} \ge0, 
\end{align*}
which rearranges to 
$$ \sum_j\sum_k |\inpro{v_j,v_k}|^{2t} \ge
c_t(\Fd) C^2 = c_t(\Fd) \Bigl(\sum_j\norm{v_j}^{2t}\Bigr)^2. $$
The polynomials integrated, are (as before)
$x\mapsto p_k(x)p_\ell (\overline{x})$ and hence
also $x\mapsto p_k(\overline{x})p_\ell (x)$ (integrate wrt $d\gs(\overline{x})$)
and their left and right linear combinations.
Its that space (left/right) unitarily invariant?
Let
$$ \Pi_{t,t}(\Hd):=\{x\mapsto p_k(x)p_\ell(\overline{x}): 
k,\ell\in \{1,\ldots,d\}^t\}. $$
$$ \int_\SS p(x)\,d\gs(x)= \sum_j p(v_j), \qquad
\forall p\in\Pi_{t,t}(\Hd). $$
The polynomial $x\mapsto x_1^t\overline{x_1}^t=|x_1|^{2t}=|\inpro{x,e_1}|^{2t}$ is integrated,
and hence by the unitary invariance of the measure so are
$|\inpro{\cdot,y}|^{2t}$ (for all $y$). More generally, 
$|\inpro{\cdot,y}|^{2s}$, $0\le s \le t$ is integrated,
since $|x_1|^2+\cdots+|x_d|^2=1$ on $\SS$ and 
$x_j^{t-s}x_1^{s} \overline{x_1}^s\overline{x_j}^{t-s}=|x_j|^{2t-2s}|x_1|^{2s}$ 
is integrated. Hence for any univariate polynomial $g\in\Pi_t$, we have
$$ \int_\SS g(|\inpro{x,y}|^2)\,d\gs(x)
= \sum_j g(|\inpro{v_j,y}|^2), $$
which gives
$$ \int_\SS\int_\SS g(|\inpro{x,y}|^2)\,d\gs(x)\,d\gs(y)
= \sum_j \sum_k g(|\inpro{v_j,v_k}|^2). $$
Conversely, taking $g(x)=x^{2t}$ above gives the variational characterisation.
Thus a $t$-design in projective space is a $(t,t)$-design.
We define a (probability) measure $\mu$ on $[0,1]$ by
$$ \int_0^1 g(s)\, d\mu(s):= \int_\SS\int_\SS g(|\inpro{x,y}|^2)\,d\gs(x)\,d\gs(y). $$
Let $Q_0,Q_1,\ldots$ be the orthogonal polynomials for the measure,
so that 
$$ \int_0^1 Q_j \,d\mu = 0, \qquad j=1,2,\ldots . $$
Then the condition on the integrals involving $g$ becomes
$$ 0= \int_0^1 Q_j\,d\mu = \int_\SS\int_\SS Q_j(|\inpro{x,y}|^2)\,d\gs(x)\,d\gs(y)
= \sum_j \sum_k Q_j(|\inpro{v_j,v_k}|^2), \qquad j=1,\ldots,t. $$
The induced measure $\mu$ (Hoggar 1982) is $w(z)dz$, where
$$ w(z) = (1-z)^{{m\over2}(d-1)-1}  z^{{m\over2}-1}, \qquad m:=\dim_\RR(\FF). $$

A variant (for regular schemes, essentially orbits) of the variational 
characterisation is given by Hoggar 1982. We can simplify this by 
requiring just $r=t$.

For the octonions $\Od$ one can formally define the inner product
as before, it satisfies
$$ \overline{\inpro{v,w}}=\inpro{w,v}, \qquad
|\inpro{v,w}|^2=|\inpro{w,v}|^2, $$
but this is not linear (only additive).
It is enough for the variational inequality to be defined
(though equivalence of $(t,t)$-designs is not obvious), 
whether or not this holds is as yet unproved.

\begin{example} We have the $6$ vector $(2,2)$-design
and a $10$ vector $(3,3)$-design for $\HH^2$. Use these to check the various
conditions of the variational characterisation.
\end{example}

\vfil\eject

Hi Shaun,
  I would like to prove (understand why)
$$
\sum_{|\ga|=t\atop\ga\in\ZZ_+^m}  
{t\choose\ga} {\Bigl({1\over2}\Bigr)_\ga }
= \Bigl({m\over2}\Bigr)_t, \qquad
t=1,2,3,\ldots, \qquad m=1,2,4. 
$$ 
Here $(x)_\ga$ is the multivariate Pochhammer function.
I proved this for $m=1,2$ (easily) and also verified it for $t=1,2,3$
(by hand). An inductive proof didn't immediately work.
If true for $m=4$, then it might well be true for $m=8$. 
I am primarily interested in simplifying the LHS.

I did a bit more playing around, and it seems to be a special case
of the multinomial theorem for Pochhammer functions:
$$
(b_1+\cdots+b_m)_t
= \sum_{|\ga|=t\atop\ga\in\ZZ_+^m}  
{t\choose\ga} (b)_\ga,
\qquad
t=1,2,3,\ldots, \qquad m=1,2,3,\ldots. 
$$ 
Do you have reference for this?

\vskip1truecm
Best Shayne

\end{document}

$$ C^*V^*VC
= \pmat{
\inpro{v_1,v_1} & 
\overline{\ga_1}\inpro{v_2,v_1}\ga_2 &
\overline{\ga_1}\inpro{v_3,v_1}\ga_3 &
\overline{\ga_1}\inpro{v_4,v_1}\ga_4 \cr
\overline{\ga_2}\inpro{v_1,v_2}\ga_1 & 
\inpro{v_2,v_2} &
\overline{\ga_2}\inpro{v_3,v_2}\ga_3 &
\overline{\ga_2}\inpro{v_4,v_2}\ga_4 \cr
\overline{\ga_3}\inpro{v_1,v_3}\ga_1 & 
\overline{\ga_3}\inpro{v_2,v_3}\ga_2 &
\inpro{v_3,v_3} &
\overline{\ga_3}\inpro{v_4,v_3}\ga_4 \cr
\overline{\ga_4}\inpro{v_1,v_4}\ga_1 & 
\overline{\ga_4}\inpro{v_2,v_4}\ga_2 &
\overline{\ga_4}\inpro{v_3,v_4}\ga_3 &
\inpro{v_4,v_4} 
 } $$

We suppose
$$ 
\overline{\ga_1}\inpro{v_2,v_1}\ga_2 =|\inpro{v_2,v_1}|
\Implies \ga_2 = {\inpro{v_1,v_2}\over |\inpro{v_1,v_2}|} \ga_1, $$
$$ \overline{\ga_1}\inpro{v_3,v_1}\ga_3 = |\inpro{v_3,v_1}|
\Implies \ga_3 = {\inpro{v_1,v_3}\over |\inpro{v_1,v_3}|} \ga_1, $$
$$ \overline{\ga_1}\inpro{v_4,v_1}\ga_4 = |\inpro{v_4,v_1}| 
\Implies \ga_4 = {\inpro{v_1,v_4}\over |\inpro{v_1,v_4}|} \ga_1. $$
The $(2,3)$-entry is
$$ \tau_{23}= \overline{\ga_2}\inpro{v_3,v_2}\ga_3
= \overline{\ga_1} {\inpro{v_2,v_1}\over |\inpro{v_1,v_2}|} 
\inpro{v_3,v_2} {\inpro{v_1,v_3}\over |\inpro{v_1,v_3}|} \ga_1, $$
which is a conjugate of $\gD(v_1,v_2,v_3)=\inpro{v_2,v_1}\inpro{v_3,v_2}\inpro{v_1,v_3}$.
We can choose $\ga_1$ so that $\tau_{23}\in\RR+i\RR_{\ge0}$. 
The $(2,4)$-entry is
$$ \tau_{24}= \overline{\ga_2}\inpro{v_4,v_2}\ga_4
= \overline{\ga_1} {\inpro{v_2,v_1}\over |\inpro{v_1,v_2}|} 
\inpro{v_4,v_2} {\inpro{v_1,v_4}\over |\inpro{v_1,v_4}|} \ga_1, $$
which is a conjugate of $\gD(v_1,v_2,v_4)=\inpro{v_2,v_1}\inpro{v_4,v_2}\inpro{v_1,v_4}$.
$$ \tau_{34}= \overline{\ga_3}\inpro{v_4,v_3}\ga_4
= \overline{\ga_1} {\inpro{v_3,v_1}\over |\inpro{v_1,v_3}|} 
\inpro{v_4,v_3} {\inpro{v_1,v_4}\over |\inpro{v_1,v_4}|} \ga_1, $$

\section{Other stuff}

The right hand side of (\ref{nequiangbound}) is the
number $n$ of (\ref{themagicn}) which ensures that
a generic sequence of $n$ vectors in $\Fd$ has a unique
scaling to a tight signed frame.

A calculation gives
$$ \overline{\gD(v_{j_1},v_{j_2},\ldots,v_{j_m})} 
= \gD(v_{j_1},v_{j_m},v_{j_{m-1}},\ldots,v_{j_3},v_{j_2}). $$

Cauchy-Schwarz inequality. As motivation, we observe that
$$ v=w\ga \Implies \inpro{v,v}=\inpro{w\ga,v}=\inpro{w,v}\ga
\Iff \ga= \inpro{w,v}^{-1}\inpro{v,v}, \quad \inpro{w,v}\ne0. $$
Expanding
\begin{align*}
\inpro{v-w\ga,v-w\ga}
&= \inpro{v,v}+\inpro{-w\ga,-w\ga} +\inpro{-w\ga,v} +\inpro{v,-w\ga} \cr
&= \inpro{v,v}+|\ga|^2\inpro{w,w} -\inpro{w,v}\ga -\overline{\ga}\inpro{v,w} \cr
&= \inpro{v,v}+{\inpro{v,v}^2\over|\inpro{v,w}|^2}\inpro{w,w} 
-\inpro{w,v}\inpro{w,v}^{-1}\inpro{v,v} -\inpro{v,w}^{-1} \inpro{v,v}\inpro{v,w} \cr
&= {\inpro{v,v}^2\over|\inpro{v,w}|^2}\inpro{w,w} -\inpro{v,v} \ge 0, 
\end{align*}
which is equivalent to C-S, with equality if and only if $v$ and $w$ 
are linearly independent.

A generalised Hopf map?
\begin{align*}
|\inpro{\psi(\vec{a}),\psi(\vec{b}}_\HH|^2
%&=\psi(\vec{a})^*\psi(\vec{b})\psi(\vec{b})^*\psi(\vec{a}) \cr
&= 
\Bigl({a^*b\over2\sqrt{1-a_5}\sqrt{1-b_5}}
+{\sqrt{1-a_5}\sqrt{1-b_5}\over 2} \Bigr) 
\Bigl({b^*a\over2\sqrt{1-a_5}\sqrt{1-b_5}}
+{\sqrt{1-a_5}\sqrt{1-b_5}\over 2} \Bigr) 
\cr
& = {a^*bb^*a\over 4(1-a_5)(1-b_5)}+{a^*b+b^*a\over4}  +{(1-a_5)(1-b_5)\over4} \cr
&= { (1-a_5^2)(1-b_5^2)\over 4(1-a_5)(1-b_5)}+
{\inpro{[a],[b]}_\RR\over 2} + {(1-a_5)(1-b_5)\over4} \cr
&= {1+\inpro{\vec{a},\vec{b}}_\RR\over 2}.
\end{align*}

More generally, for $a\in\HH^k$, let
$$ \psi(\vec{a})=\pmat{c_a a\cr w_a}, \qquad
c_a^2|a|^2+w_a^2=1, $$
\begin{align*}
|\inpro{\psi(\vec{a}),\psi(\vec{b})}_\HH|^2
&= \psi(\vec{a})^*\psi(\vec{b})
\psi(\vec{b})^*\psi(\vec{a})
=(c_ac_b a^*b+w_aw_b) (c_ac_b b^*a+w_aw_b) \cr
&= c_a^2c_b^2 a^*bb^*a +c_ac_bw_aw_b(a^*b+b^*a)+w_a^2w_b^2, \cr
\end{align*}
we want to choose $c_aw_a$ to be constant, such that everything else works out.

\begin{theorem}
Sequences $(v_j)$ and $(w_j)$ of $n$ vectors in $\Hd$ are projectively unitarily
equivalent if and only if their $m$-products are equal (up to conjugation).
\end{theorem}

\begin{proof} We have already observed that projectively unitarily equivalent
sequences have the same $m$-products (up to conjugation). 
It therefore suffices to show that if $(v_j)$ and $(w_j)$ have the same $m$-products,
then we can choose $\ga_1,\ldots,\ga_n$ satisfying
(\ref{projunitequivGramian}). Since the vectors in the connected components of the
frame graph span orthogonal subspaces, we can assume that the frame graph $\gG$ is 
connected.

{\it Spanning tree argument.}
Find a spanning tree $\cT$ of $\gG$ with root vertex $r$. 
By working outwards from the root $r$, we can multiply the vertices 
$v\in \gG\setminus\{r\}$ 
by unit scalars $\ga_v$ so that for an edge $\{v_j,v_k\}\in\cT$, 
(\ref{projunitequjkcdn}) holds, i.e.,
$$ \inpro{w_k,w_j}=\ga_k\overline{\ga_j}\inpro{v_k,v_j}. $$
In this way, we can choose $\ga_1,\ldots,\ga_n$ so 
that (\ref{projunitequjkcdn}) holds
for all edges $\{v_j,v_k\}\in\cT$.
%holds in all entries except those corresponding
{\it Completing cycles.} It remains only to show
that (\ref{projunitequjkcdn}) also holds
for all edges $e=\{v_j,v_k\}\in\gG\setminus\cT$.
Let $(v_j,v_k,v_{\ell_1},\ldots,v_{\ell_r})$ be
the fundamental cycle given by the edge $e=\{v_j,v_k\}$.
Since the $m$--products are equal, 
and the other edges in this cycle belong to $\cT$, 
we obtain
\begin{align*}
\Delta(w_j,w_k,w_{\ell_1},\ldots,w_{\ell_r})
& = \inpro{w_j,w_k}\inpro{w_k,w_{\ell_1}}
\inpro{w_{\ell_1},w_{\ell_2}}\cdots
\inpro{w_{\ell_r},w_j} \cr
& = \inpro{w_j,w_k}
\ga_k\overline{\ga_{\ell_1}} \inpro{v_k,v_{\ell_1}}
\ga_{\ell_1}\overline{\ga_{\ell_2}}\inpro{v_{\ell_1},v_{\ell_2}}\cdots
\ga_{\ell_r}\overline{\ga_j}\inpro{v_{\ell_r},v_j} \cr
& = (\ga_k \overline{\ga_j} \inpro{w_j,w_k})
 \inpro{v_k,v_{\ell_1}} \inpro{v_{\ell_1},v_{\ell_2}}\cdots
\inpro{v_{\ell_r},v_j} \cr
& = \inpro{v_j,v_k}
 \inpro{v_k,v_{\ell_1}} \inpro{v_{\ell_1},v_{\ell_2}}\cdots
\inpro{v_{\ell_r},v_j} \cr
&= \Delta(v_j,v_k,v_{\ell_1},\ldots,v_{\ell_r}),
\end{align*}
and cancellation gives (\ref{projunitequjkcdn})
for the edge $\{v_j,v_k\}\in\gG\setminus\cT$.

{\it Completing cycles.} It remains only to show
that (\ref{projunitequjkcdn}) also holds
for all edges $e=\{v_j,v_k\}\in\gG\setminus\cT$.
Let $(v_j,v_k,v_{\ell_1},\ldots,v_{\ell_r})$ be
the fundamental cycle given by the edge $e=\{v_j,v_k\}$.
Since the $m$--products are equal, 
and the other edges in this cycle belong to $\cT$, 
we obtain
\begin{align*}
\Delta(w_j,w_k,w_{\ell_1},\ldots,w_{\ell_r})
& = \inpro{w_j,w_k}\inpro{w_k,w_{\ell_1}}
\inpro{w_{\ell_1},w_{\ell_2}}\cdots
\inpro{w_{\ell_r},w_j} \cr
& = \inpro{w_j,w_k}
\ga_k\overline{\ga_{\ell_1}} \inpro{v_k,v_{\ell_1}}
\ga_{\ell_1}\overline{\ga_{\ell_2}}\inpro{v_{\ell_1},v_{\ell_2}}\cdots
\ga_{\ell_r}\overline{\ga_j}\inpro{v_{\ell_r},v_j} \cr
& = (\ga_k \overline{\ga_j} \inpro{w_j,w_k})
 \inpro{v_k,v_{\ell_1}} \inpro{v_{\ell_1},v_{\ell_2}}\cdots
\inpro{v_{\ell_r},v_j} \cr
& = \inpro{v_j,v_k}
 \inpro{v_k,v_{\ell_1}} \inpro{v_{\ell_1},v_{\ell_2}}\cdots
\inpro{v_{\ell_r},v_j} \cr
&= \Delta(v_j,v_k,v_{\ell_1},\ldots,v_{\ell_r}),
\end{align*}
and cancellation gives (\ref{projunitequjkcdn})
for the edge $\{v_j,v_k\}\in\gG\setminus\cT$.

\begin{align*}
 \Delta(w_j, &w_k,w_{\ell_1},\ldots,w_{\ell_r})
= \inpro{w_k,w_j}\inpro{w_{\ell_1},w_k}\inpro{w_{\ell_2},w_{\ell_1}}\cdots
\inpro{w_{\ell_r},w_{\ell_{r-1}}} \inpro{w_j,w_{\ell_r}} \cr
&= \inpro{w_k,w_j}
\overline{\ga_k} \inpro{v_{\ell_1},v_k} \ga_{\ell_1}
\overline{\ga_{\ell_1}} \inpro{v_{\ell_2},v_{\ell_1}} \ga_{\ell_2} \cdots
\overline{\ga_{\ell_{r-1}}} \inpro{v_{\ell_r},v_{\ell_{r-1}}} \ga_{\ell_r}
\overline{\ga_{\ell_r}} \inpro{v_j,v_{\ell_r}} \ga_{j} \cr
&= \inpro{w_k,w_j}
\overline{\ga_k} \inpro{v_{\ell_1},v_k} \inpro{v_{\ell_2},v_{\ell_1}} \cdots
 \inpro{v_{\ell_r},v_{\ell_{r-1}}} \inpro{v_j,v_{\ell_r}} \ga_{j} \cr
&= \gb^{-1} \inpro{v_k,v_j}
\inpro{v_{\ell_1},v_k} \inpro{v_{\ell_2},v_{\ell_1}} \cdots
 \inpro{v_{\ell_r},v_{\ell_{r-1}}} \inpro{v_j,v_{\ell_r}} \gb \cr
& =\gb^{-1} \Delta(v_j,v_k,v_{\ell_1},\ldots,v_{\ell_r}) \gb
\end{align*}
with $v_j$ the root vertex, we can choose $\ga_j=\gb$, and
cancel, to obtain
$$ \inpro{w_k,w_j} \overline{\ga_k}= \overline{\ga_j} \inpro{v_k,v_j}
\Implies \inpro{w_k,w_j} = \overline{\ga_j} \inpro{v_k,v_j} \ga_k. $$
\end{proof}